\documentclass{article}
\usepackage{graphicx} 
\usepackage{amsmath}

\fontsize{12pt}{16pt}\selectfont

\usepackage{graphicx} 
\usepackage{amsmath,amssymb,amsmath}
\usepackage{enumitem,color}
\usepackage[margin=0.9in]{geometry}

\usepackage{amsthm}

\usepackage{xcolor}

\usepackage{float}

\newtheorem{ex}{Example}
\newtheorem{mydef}{Definition}
\newtheorem{rmk}{Remark}
\newtheorem{thrm}{Theorem}

\newtheorem{Proposition}{Proposition}

\title{Axiomatic Learning and Probabilistic Structures with first applications to variants of Arithmetic Machine Learning and beyond}
\author{Vlad Margarint}
\date{October 2026}

\begin{document}

\maketitle

\textbf{Abstract} We introduce the Axiomatic Learning Structures (ALS) and Axiomatic Probabilistic Structure (APS) in order to study the interactions of Machine Learning, Optimization, with research frameworks- which, we take the view, can be characterized at the core by the corresponding axioms. The ALS explores the interplays between structures that one is interested in Learning Theory such as Loss functions and the axioms of the objects and transformations involved in these functions. The APS studies the interaction between structures of interest in Probability Theory such as Gibbs distributions, equations with noise, random fields, etc. and the axioms of the corresponding objects and transformations. We also introduce in Section 6 Transformations-Axiomatic Based Learning (TABLe) on Mathematical data-sets. One feature of this approach is the ranking of transformations from a table for a certain task based on scores which reflect which transformations work better than others, similarly to when human investigators try various methods on a task. We study an initial controlled experiment. In addition, we study structure recovery from noisy data. Among others, in this context, one aim of this research direction is to study the interplay between the number of (noisy) data points, the rigidity and structure of the axioms and of the objects, the characteristics of the noise, and the structure of the landscape of the loss functions, energies, etc. In particular, in this first study, in order to exemplify the ideas, we consider loss functions which contain derivations and derivations-like transformations. As a first application of a certain ALS we study recovery in the context of polynomials with integer coefficients using standard derivatives along with other types of derivations which appear in Arithmetic: the p-derivations which satisfy \textit{different axioms} from the usual derivatives. 
To keep the same theme, as a first application of the APS we study a class of noisy (partial/mixed) derivations equations for polynomials with integer coefficients, along a spatial and 'arithmetic' direction. 
The introduced studies in this paper open problems which we hope will lead to new research directions. A part of these directions can be described as Transformations-Axiomatic Based Learning (TABLe), Data-driven Arithmetic Geometry, along with Intersections and Enumerative versions. In the last sections, to provide further potential applications of this framework, we give an overview of an initial simplified Opinion Dynamics model and its (simultaneous) study of rules of updates.

\section{Introduction}

 An Axiomatic Learning Structure (ALS) consists of a space of objects, a loss function or a sequence of loss functions, operations between the objects, and transformations applied to the objects. We consider structures with or without noise. The first ones in the context of structure recovery from noisy data, and the second one in the context of Conjecture Generation from Mathematical objects data-sets. These loss functions depend on the axioms of the operations between the objects and of the transformations we consider. 
 Minimization problems of Loss functions appear naturally in Machine Learning (ML) (see \cite{bishop2023deep}, \cite{BishopML}). In addition to ML, there are works that apply these types of minimization techniques in the field of Inverse problems (see \cite{klibanov1992inversebook}, \cite{klibanov2012approximate}). See also \cite{grabchak2026test}, \cite{mcgoff2020empirical}, \cite{farmer2024empirical} and \cite{shrotri2026neural}. The last reference studied Machine Learning methods in the novel, to the best of my knowledge, context of singular dynamics.

As proof search and the solution of well-formulated problems can become more automated, parts of the work of researchers may shift toward choosing the problems, inventing the appropriate structures, etc. See \cite{chau2025machine} for Research-level Mathematical objects data-sets. In the current work we aim to study, in one direction, recovery of structure from noisy data (we study noisy data for integer polynomials, but the perspective applies more broadly to other data sets). In a second direction, we define and investigate Transformations-Axiomatic Based Learning (TABLe) on structured mathematical data-sets with the aim to develop a research direction in which transformations may be learned. In this context, we want to study loss functions defined by their structural rules, establish recovery and estimation guarantees, and use transformations to search mathematical databases for new patterns and conjectures. In general, the $ALS$ are designed to be flexible to include operations and various types of operations over the spaces of interest. This framework can be adapted to different fields of research.

Although we will focus on this current manuscript on the recovery of structure from noisy data, the objective of this approach is multifold and it contains multiple directions. For example, one could investigate the following questions.
\begin{itemize}

\item[] 1)Recovery.
Under what conditions can the original mathematical object be recovered from noisy observations?

\item[] 2)Identifiability.
When can the underlying transformations and their axioms/rules themselves be recovered?

\item[] 3)Generalization.
When do relationships discovered in a finite collection of mathematical objects extend to larger or previously unseen classes of objects?

\item[] 4)Approximate identities.
Can small transformation-regularization errors reveal meaningful approximate mathematical structures?

\item[] 5)Conjecture generation.
Can learned transformations suggest identities of the form
\[
F(T_1(X),\ldots,T_m(X))=0
\]
that are subsequently recognized as general mathematical statements and proved independently?
\end{itemize}
These questions move the problem from  Machine Learning prediction towards structural discovery. We take the view that the axioms carry core information about the corresponding model one will consider for the data-set or would like to learn about the objects in the data. One can consider various versions of this analysis including noisy data sets, for example, coming from Biology or other Sciences, or datasets without noise like the mathematical objects data sets. We focus this work on the recovery from noisy data cases, but one can study with these ideas data-sets without noise. For the data-sets without noise one wants to learn rules which best capture the essence of the data. For this, in this work, we introduce the Transformations-Axiomatic based Learning (TABLe) approach and Conjecture Generation in Section 6 of this paper.
The approach is based on assigning transformations from a library scores based on how close they were to characterize the structure. We also perform an initial experiment in this context in the case of symbolic power of ideals geometric characterization from the algebraic characterization.
This method does not offer only the solutions or transformations that worked but also the ones that did not and how far-off where they based on the scores obtained. Furthermore, we can further take these findings and send the information to some Large Language Models to further explore the found transformations and what type of results/conjectures one might expect and then to a Lean formalization.
In addition, one can perform a uniform analysis over various choices of transformations, see the discussion on a model linked with Economics in Section 10 below.

We note that there are many mathematical objects also in the continuum whose space is described by rules. One can also study both classes of continuum and discrete mathematical structures given by rules and the corresponding objects that form them and the loss function evaluated on them. Thus, the methods we develop work for both discrete and continuous. We focus on the discrete case in this work.
In the continuous case one can apply gradient methods, which are already developed in the literature.

We consider objects $r \in R$, where $R$ is a generic mathematical structure.
The $ALS_d$ contains a loss function or a collection of such functions that we seek to minimize.  One can also consider maximization Optimization problems as this type of framework is general, however we restrict to the minimization type problems for convenience in this manuscript. In the context of Structure Recovery from noisy data, one can study minimization optimization problems such as hard constraints problems, soft constraint problems, augmented versions (see for example, \cite{veryimportantsoftandhardlagrangepaper2023augmented}), etc. We focus this work on the soft problems, and we discuss the motivations for doing so in the corresponding sections. However, we provide experiments both in the hard constraints and soft constraint formulations, as there is no restriction to study only soft problems in the context of ALS, as they are designed such that one can consider general optimization problems of various natures. 

A generic ALS contains a space of objects and operations $(R, \circ_1, \circ_2, \ldots, *_1, *_2, \ldots)$ with $*_i$ and $\circ_i$, $i \geq 1$, satisfying certain axioms along with the transformations $T_j$ which respect also some rules with respect to the operations, which we can capture (for the sake of generality) as $T_j(r_1*_ir_2)=\Phi_{i,j}(r_1, r_2, *_i),$ and $T_j(r_1\circ_ir_2)=\tilde{\Phi}_{i,j}(r_1, r_2, \circ_i), $ where $\Phi_{i,j}()$ and $\tilde{\Phi}_{i,j}()$ are functions of the generic elements $r_1, r_2 \in R$ and the operations. This perspective helps when one considers also inequalities in the constraints. 
In one section of this work, we study specific fundamental transformations which appear in various area of research, namely derivations or derivations-like transformations, but the setting developed in this manuscript is general, and captures generic spaces and transformations along with their axioms. This leads to a particular choice of the functions $\Phi_{i,j}(\cdot)$ and $\tilde{\Phi}_{i,j}(\cdot)$.  We will call them for simplicity $ALS_d$. Derivations are studied in algebraic settings and are maps which usually satisfy as axioms Linearity and Leibniz Rule $\partial(fg)=g\partial(f)+f\partial(g)$, for $f$ and $g$ objects in our space of consideration. Derivations appear in many studies and have many applications in various areas of research such as Complex Geometry, Coding theory, see for a non-exhaustive list \cite{higherderivationssurvey}, \cite{referencederivsakai1966derivations}, \cite{Algebraicfunctionfieldsandcodes}, \cite{klimek2021derivationsapplications}, \cite{derivationsapplicationfinitefields} and the references therein. See also \cite{xin2008derivationsonlattices} for lattice versions.

These loss functions depend on the choices that we make for the operations and for the notions of derivations and derivations-like operations that we work with. One of the first examples is the $ALS_d$ with a standard notion of derivative from Mathematical Analysis as a choice of derivation. The usual derivative satisfies Linearity and Leibniz Rule. This example of $ALS_d$ is assumed to be known and we will not cover it in this work.
These frameworks extend the study of loss functions with standard derivatives to more general transformations. As a particular example, in order to exemplify the ideas, in this work we study an Axiomatic Learning Structure linked with Arithmetic. This has as elements polynomials with integers coefficients along  with natural operations such as addition or multiplication, etc. and two notions of derivations, the usual derivative and the p-derivation. The latter one was introduced in \cite{Buiumoriginalpaper1} \cite{Joyal}.The p-derivations do \textit{not} satisfy Linearity and they satisfy a \textit{modified version of Leibniz rule} (see \cite{Buiumoriginalpaper1}). This is a first point in which these derivation-like objects are setting apart from the usual derivative.  A natural question is then how these properties do reflect in the structure of the landscape of the loss functions.

One direction of research when working with structure recovery from noisy data, is to consider the interaction between the structure of the axioms of transformations, the rigidity of the objects, the number of observed noisy data points, the characteristic of the noise, and the landscape and the minimization of the loss function. In general, the algebraic and arithmetic structures come with rigidity and intuitively one might need few data points to discover a rigid object. As a first application, we consider polynomials with integer coefficients but one can consider other objects such as polynomials with coefficients a subset of the integers, or on other structures along with the choice of transformations as derivations and derivations-like operations. However, the scope of this work is more general for generic objects that can be understood through a number of parameters (in the case of polynomials, these are the coefficients) and general transformations between them given by some rules/axioms and some formulas. 
In this work, we consider synthetic data generated from a ground truth integer-coefficient polynomial to which we add noise.  The fact that the ground-truth polynomials are integer coefficients is an instance of rigidity. As the objects are integer valued it is interesting to ask how does rigidity interact with the number of observed data points. The advantage of using synthetic data is that given that one knows the ground truth one can measure the recovery error, along with robustness to noise, etc. We perform a part of these experiments in this work. 
These loss functions are a hybrid of least-square term and penalty terms.  These types of objects appear in the literature when one considers  mixes of least-square and interpolation problems of the object itself or the usual derivatives and gradients (see \cite{veryimportantsoftandhardlagrangepaper2023augmented}, \cite{booksurfacereconstructionpage5322015scale}, \cite{CSimagereconstructpaperrobust}). We consider extensions of these in which the usual gradient is replaced with mixed usual and p-derivations. 

In the second part of the paper, we study a notion of Arithmetic Energy and its landscape for a family of polynomials, and we consider also analogies with Gibbs measures. In a separate direction, we introduce the Axiomatic Probabilistic structure (APS). An example of APS which we study consists of a space of objects, operations between the objects, axioms that the notions of derivations satisfy along with a notion of noise. This framework extends Buium’s work on Arithmetic Differential Equations (see \cite{Arithmeticdiffequationsbook}, \cite{Buiumoriginalpaper1}, \cite{Buiumoriginalpaper2}) to random/noisy versions. We will call them Noisy Derivations Equations and Noisy Partial Derivations Equations. We study these in Arithmetic contexts. The second category appears when one considers the spatial direction as a first component and a notion of 'arithmetic direction' as a second. We will do an initial study of these. The equations in this context appear when one is performing random search (see Section 9). This leads to equations involving (mixed) derivations, each with its own axioms, and randomness. We focus this manuscript on the  commutative case. It will be interesting to study if random versions can be obtained further for other types of approaches (see the introduction in  \cite{Arithmeticdiffequationsbook} where other types of Calculus are mentioned).

 There are previous works on exact arithmetic data (see, for example, Chapter 1 in \cite{chapter1exactarithmeticdatahe2023machine}) and roots of random polynomials in  \cite{leont2006rootsuniform}. What distinguishes our approach to the first work is that that work does not use noise, no estimation theory, and contrast to the uniform choice of noise in the second work, ours is based on a statistical estimation procedure applied to noisy data (data-driven).

We believe that it is interesting to see how the studies covered in \cite{murphy2012machine}, \cite{bishop2023deep}, \cite{BishopML}, such as Decision Theory, Information Theory, study of entropy, etc interact with  the axioms of the corresponding framework one considers. Following this, we give a non-exhaustive list of possible open problems 
In this work, we cover the axioms of some studied before frameworks but our approach is general in the sense that one allows for other axioms of frameworks that we yet might not have discovered.  Then, with those new axioms, this approach will allow to analyze how do the corresponding loss functions, etc. look and how are their landscapes, minimizers, etc. 

In order to keep the manuscript more widely accessible to the communities interested in developments in the areas of Machine Learning, we keep the exposure in this manuscript as simple as possible and focus on simple computations and initial experiments.  There are more involved theoretical and experimental directions to be further explored. 

In this work, we investigate how Loss functions considered in ML/ Optimization interact with the axioms of the transformations relevant for a framework one might consider studying. We consider some example of known transformations and axioms but we keep the framework flexible to other rules/axioms one might discover (for example, in Biology, etc). By studying the structure of the loss landscape we want to investigate also the computational complexity of corresponding tasks in the framework one considers.

\textbf{Structure of the paper: }The manuscript is divided into several sections, the first being the Introduction. In Sections 2-4, we study  $ALS_d$ based on the usual derivatives along with p-derivations and which has as objects integer-coefficients polynomials. We explore the loss landscape and some properties of these functions. We discuss applications of this $ALS_d$ in the context of data-driven Arithmetic Geometry for structure recovery at unseen noise values, as well as applications to potential data security, etc. In section 5, we study an example of other $ALS_d$ based on Hasse-Schmidt derivations (\cite{hasse1937noch}). In Section 6 we change directions, and we do not focus on structure recovery from noisy data. We introduce initial ideas of Transformations-Axiomatic based Learning (TABLe) and Conjectures generations and we study a certain subset of mathematical objects, namely symbolic power of ideals. We describe the idea of ranking transformations characterized by axioms, based on their scores when applied to the objects and we perform an initial controlled experiment.   In Section 7, we study notions of energies along with some of their properties and Gibbs measures linked with Arithmetic. In Section 8, we introduce Axiomatic Probabilistic Structures APS, and study noisy derivations equations involving usual and p-derivations emerging in random sampling, and in section 9 we perform initial experiments in these directions. In Section 10 we discuss initial applications of ALS in Economics considering various scenarios (such as looking over a set of potential transformations simultaneously along with their axioms). In the final section, we close with some open problems and other examples of ALS and APS related with Random Fields and a random analogue of the deterministic geometric arithmetic picture considered in \cite{Foundationsarithmeticgometrybook}.

\section{General view}

We devote a large part of the paper on the study of structure recovery from noisy observations using loss functions and axioms of the transformations involved. When studying some spaces of objects, and a corresponding set of natural transformations on them, one can  directly with the definition of them (if it has access to it) or with the axiomatic characterization of how the transformation acts on basic operations. The second route is also useful when one is lead to consider the study of loss functions landscapes, or perturbations around the initialization values in Optimization problems. Having access to the axioms/rules exactly gives information on the loss function/energy landscape. 

Beyond the study of Recovery using known transformations,  where the theoretical questions concern identifiability, recovery probability, error bounds, study of the loss landscape, etc, we are also interested in other directions of research:
\begin{enumerate}

    \item{Learning the transformations themselves.}

Study of families of transformations
    \[
    \{T_i:i \in I\}
    \]
    and study both the mathematical object and the transformation:
    \[
    (\widehat x,\widehat i)
    =
    \arg\min_{x,i}
    L(x,T_i).
    \]

    For mathematical databases, this becomes especially interesting because the learned transformation might correspond to an interpretable mathematical operation or rule.

    \item {Equation solving and conjecture generation.}

    Once transformations are known or learned, search the database for objects satisfying transformation equations such as $    F(T_1(x),\ldots,T_m(x))=0.$

    Approximate solutions can be ranked through
    \[
    \mathcal L(x)
    =
    d\!\left(
    F(T_1x,\ldots,T_mx),0
    \right),
    \]
for some metric d, 
    thus identifying objects that nearly satisfy a proposed mathematical relation.

    Examining these objects could suggest conjectures about which classes satisfy the equation exactly and why.
\end{enumerate}

\subsection{Loss functions and Axioms in the context of recovery from noisy data}

Let us consider a general axiomatic structure with a space of elements $r \in R$.  An Axiomatic Structure is presented as the objects of interest of study from Learning Theory (in this case Loss functions), along with a list of the axioms of the objects involved. The axioms can be divided into the ones characterizing the objects of interest $R$ and the ones of the transformations involved. The loss function $\mathcal{L}(\cdot)$ can have various formats involving the objects of study $r_1,\ldots, r_n,$ the involved parameters $s_1, \ldots, s_m, \lambda_1, \ldots, \lambda_m,$ the transformations $T_1, ...., T_m,$ and the losses used $ \ell_1(\cdot), ..., \ell_m(\cdot)$, for $n, m \in \mathbb{N}$. We focus this work on total loss functions in which the format is given by a sum of individual constraint components but one might consider more general loss functions.
For each term in the loss function we have the sign choices $s_j \in \{-1, +1\}$, $j \in {1,..., m}$ (which can be negative to accommodate the cases in which the loss function has terms with negative signs) and the corresponding transformations $T_j$, $j \in \{0,..., m\}$, where $m \geq 0$ is the number of elements in the loss function. Each term in the loss function has a penalty rate parameter $\lambda_j$, $j\leq m$.  These are all non-negative real scalar parameters. We take the view that the data $y_i(\omega)$ is noisy, for $\omega \in \Omega$, where $\Omega$ is a sample space.  These loss functions appear under the name of Empirical Loss in Statistical Learning theory and they are very important objects of study when one does not have access to the whole population of objects but to a sample of certain size. One can formulate also a population version for our objects. We focus this work on the empirical loss version:

\begin{align}
& \quad r \in R\nonumber\\
& \quad \mathcal{L}_{\lambda_!, \ldots, \lambda_m}(r, \omega)=s_0\frac{1}{n}\sum_{i=1}^n \ell_1(T_1({r}),{y_i(\omega)})+\ldots+s_m\lambda_m\ell_m\left(T_m (r)\right) \\
& \mathcal{L}_{\lambda_1, \dots, \lambda_m}^*=\min_{r \in R} \mathcal{L}_{\lambda_1, \ldots, \lambda_m}(r, \omega).
\end{align}

Throughout the paper, we call $\mathcal{L}(\cdot)$ the total losses 
A particular choice that one can study corresponds to $\ell(\cdot):=(\cdot)^2$ and $s_j=1$, for all $j \in \{0,..., m\}.$ Another possible choice is $\ell(\cdot):=|\cdot|$. One can consider other choices for $\ell(\cdot)$. 

For the Axioms description, let us consider $(R, \circ_1, \circ_2, \ldots, *_1, *_2, \ldots)$ with $*_i$ and $\circ_i$, $i \geq 1$,  satisfying certain axioms along with the transformations $T_j$ which respect certain axioms with respect to the operations:

For $i \in \{1, \ldots, n\}$ and $j \in \{1, \ldots, m\},$ we have
\begin{equation}
T_j(r_1*_ir_2)=\Phi_{i,j}(r_1, r_2, *_i), 
\end{equation}
where $\Phi_{i,j}()$ are functions of the generic elements $r_1, r_2 \in R$ and the operations.

Similarly, for $i \in \{1, \ldots, n\}$ and $j \in \{1, \ldots, m\},$ we have
\begin{equation}
T_j(r_1\circ_ir_2)=\tilde{\Phi}_{i,j}(r_1, r_2, \circ_i), 
\end{equation}
where $\tilde{\Phi}_{i,j}()$ are functions of the generic elements $r_1, r_2 \in R$ and the operations.

We consider the following three formulations.

Given the data points $y_i(\omega)$, $i \in \{1, \ldots, n\}$ and the transformations $T_j$, $j \in \{1, \ldots, n\}$, a loss $\ell_1(\cdot),$ let us consider 
\[
l(r, y)=\frac{1}{n}\sum_{i=1}^n \ell_1(T_1({r}),{y_i(\omega)})
\]
along with the constraints 
\[
C_{j}(T_j, r)
\]
where we simplified the notations, dropping the dependence on the transformations, for the convenience of the reader.
In the context of the Hard-constrained problems
one imposes the interpolation condition exactly
\[
\min_r l(r, y)
\qquad \text{subject to } C_j(T_j, r)=0.
\]

Then, one considers the Lagrangian
$$
\mathcal{L}(r,\lambda)=l(r, y)+\lambda^\top C(r),
$$
where \(\lambda\) is a Lagrange multiplier and the quantity $C(r)$ contains all the constraints we considered. 
Then, one considers the saddle point problem:
$$
\min_r \sup_{\lambda} \bigl(l(r,y)+\lambda^\top C(r)\bigr).
$$
This gives that for fixed \(r\), we have
$$
\sup_{\lambda}\bigl(l(r, y)+\lambda^\top C(r)\bigr)
=
\begin{cases}
l(r, y), & C(r)=0,\\[1mm]
+\infty, & C(r)\neq 0.
\end{cases}
$$

Thus infeasible values are excluded automatically, since if \(C(r)\neq 0\), the term
$\lambda^\top C(r)$
can become arbitrarily large.
Instead of imposing exact interpolation, one may only penalize deviation from it:
$$
\min_r \Bigl(l(r, y)+\lambda \|C(r)\|^2\Bigr).
$$
In this case the constraints \(C(r)\) are not forced. In the soft penalty approach, one balances the original objective \(l(r, y)\) against the cost of not respecting interpolation condition. In this case, $\lambda$ is not an optimization variable. It is a chosen parameter that one does not differentiate with respect to. As for fixed r, we have that $
l(r, y)+\lambda\|C(r)\|^2
$
is increasing in $\lambda$ whenever $C(r)\neq 0$, the minimizing would give $\lambda=0.$ In addition to the previous methods, one can consider a mix between the two, the so-called augmented Lagrangian. We refer the reader to \cite{veryimportantsoftandhardlagrangepaper2023augmented} for more details and an application of this method. In the cases that we study in this manuscript, we will be interested mostly in the soft penalty formulation, as we would like to detect from noisy observations objects that might not satisfy all of the constraints as well and to characterize how 'far ' are the objects from satisfying all of the constraints. The constraints appear in the models when one might have some access to some prior information.

\subsection{Gibbs measures and Axioms}

So far we have discussed loss-functions, now we move our attention to the energy formulation and its interaction with the axioms. Let us consider $(R, \circ_1, \circ_2, \ldots, *_1, *_2, \ldots)$ with $*_i$ and $\circ_i$, $i \geq 1$,  satisfying certain axioms. For a subset of elements $\mathcal{R} \subseteq R$, parameters $\lambda_j,$ losses $\ell_j(\cdot),$ $j \in \{1, \ldots, m\}$, we consider the following 'Energy linked with Axioms'  as objects of study of the 'Energy-based methods linked with Axioms':

\begin{align}
& \quad E_{\lambda_!, \ldots, \lambda_m}(r, \omega)=\lambda_1\ell_1\left(T_1 (r)\right)+\ldots+\lambda_m\ell_m\left(T_m (r)\right) \\
& E_{\lambda_1, \dots, \lambda_m}^*=\min_{r \in \mathcal{R}} E_{\lambda_1, \ldots, \lambda_m}(r, \omega).
\end{align}

These are presented along with a list axioms/rules for $T_j$, $j \in \{1, \ldots, m\}$ on the basic operations between the elements of the space.

For $\beta \geq 0$, whenever the  partition function $Z_\beta$  is finite, we consider the Gibbs measure
\begin{equation}
\mu_{\beta, \lambda_1, \ldots, \lambda_m}(r)
=
\frac{e^{-\beta E_{\lambda_1, \dots, \lambda_m}(r)}}{Z_{\beta}},
\end{equation}
where $Z_{\beta}$ acts as a normalization. 

Objects $r$ with very small energies are more likely to occur while objects with higher energies are less likely to appear under this measure. This is a general picture, which we particularly apply and detail in the following sections for transformations linked with certain frameworks. These Gibbs measures are useful, among others, when one is interested to do sampling of objects with certain properties in order to generate data.

\section{An $\text{ALS}_d$ linked with Arithmetic}


Let us consider in this section, the space of objects to learn from noisy data to be a subset of integer coefficients polynomials. We study an $ALS_d$ for these polynomials based on a mix of usual and p-derivations, where p is a prime number. 
A non-exhaustive list of references on p-derivations is \cite{Arithmeticdiffequationsbook}, \cite{Buiumoriginalpaper1}, \cite{Buiumoriginalpaper2}. We assume familiarity with the usual derivatives $\frac{d}{dx}$ of a polynomials, which satisfy Linearity and Leibniz rule, and we proceed with the introduction of p-derivations.

\subsection{Introduction to p-derivations}
Let $p$ be a prime number. From Fermat's Little Theorem, we have that for any integer $n$, 

$$
n \equiv n^p \quad \bmod p. 
$$
 
This result allows us to divide the difference $n-n^p$ by $p$. This will be useful to define the notion of p-derivation below.  

\begin{mydef}
For a prime number $p$, the $p$-derivation on $\mathbb{Z}$ is defined as the function $\delta_p: \mathbb{Z} \rightarrow \mathbb{Z}$ given by the formula

$$
\delta_p(n)=\frac{n-n^p}{p}.
$$
\end{mydef}

\begin{ex}
Let $n \in \mathbb{Z}$. The 19-derivation on $\mathbb{Z}$ is given  by
$$ \delta_{19}(n)=\frac{n-n^{19}}{19}.
$$
\end{ex}

This transformation is natural if one looks at the $p$-adic order of $n$, that is the number of copies of $p$ in its prime factorization. Let us write $n=p^a m$ with $\operatorname{gcd}(m, p)=1$, if $a>0$, we get

$$
\delta_p\left(p^a m\right)=\frac{p^a m-\left(p^a m\right)^p}{p}=p^{a-1} m\left(1-p^{a p-a} m^{p-1}\right).
$$

Since $p \geq 2$ and $a \geq 1$, we must have $a p-a \geq 1$, so $p$ does not divide $1-p^{a p-a} m^{p-1}$. Thus, we obtain that the $p$-derivation decreases the $p$-adic order of a multiple of $p$ by exactly one.

For $R$ a commutative ring with unit 1 and $p$ a prime integer, a p-derivation on $R$ is a function $\delta: R \rightarrow R$ such that $\delta(0)=\delta(1)=0$ and $\delta$ satisfies the Sum Rule/Axiom and the Product Rule/Axiom below; i.e., for all $r, s \in R$,

$$
\delta_p(r+s)=\delta_p(r)+\delta_p(s)-\sum_{i=1}^{p-1} \frac{\binom{p}{i}}{p} r^i s^{p-i},
$$

and

$$
\delta_p(r s)=r^p \delta_p(s)+s^p \delta_p(r)+p \delta_p(r) \delta_p(s).
$$

Thus, the axioms/rules above show that the $p-$derivations are not additive nor multiplicative making them different transformations from the usual derivatives.

We emphasize that the definition in \cite{Jeffriescite} covers the more general case of smooth $\mathbb{Z}$-algebras. For simplicity of the exposure, we will do our analysis in one of the simplest cases of this, that is the integer coefficients polynomials. The framework that we develop can be made more general. Note that a ring $R$ might have more derivations or p-derivations on it. In our work, we assume that we choose one of the derivations and one of the p-derivations that we use in the loss functions.

In the examples of $ALS$ which we consider in this manuscript, we work over the integer-coefficient polynomials. Given this, we will work with the following p-derivation in this context. 

\begin{mydef}\label{def p der}
On the ring of polynomials with integer coefficients in n variables $\mathbb{Z}[x_1, x_2, \ldots, x_N]$ we have
$$
\delta_{\text{st},p}\left(f\left(x_1, \ldots, x_N\right)\right)=\frac{f\left(x_1^p, \ldots, x_N^p\right)-f\left(x_1, \ldots, x_N\right)^p}{p},
$$
\end{mydef}

This is a $p$-derivation. Following the literature, we call this the standard p-derivation on $\mathbb{Z}\left[x_1, \ldots, x_N\right]$ and use the notation $\delta_{\text{st, }p}$ as in the definition.

In order to see how this p-derivation acts on polynomials with integer coefficients, we apply it concretely in the following example.

\begin{ex}
Let us consider the particular case $p=2$. Then for $f(x) \in \mathbb{Z}[x],$ $f(x)=x^2+3x,$ we have
$$
\begin{aligned}
\delta_{\mathrm{st}, 2}\left(x^2+3 x\right) & =\frac{\left(x^2\right)^2+3\left(x^2\right)-\left(x^2+3x\right)^2}{2} \\
& =-3x^2-3 x^3 .
\end{aligned}
$$

\end{ex}

In the closing of this section, we mention that not every ring admits a $p$-derivation. It is known that no ring $R$ of characteristic
$p$ admits a $p$-derivation, due to the following contradiction in $R$

$$
0=\delta(0)=\delta(p)=\delta_p(p)=1-p^{p-1}=1
$$
It is natural to ask which rings admit p-derivations.

\subsection{Polynomials in the set $A_N$}
Before we proceed with our analysis, let us mention the set of polynomials we will consider in our investigation. In our study we consider $f(x_1, \ldots, x_N) \in A^D_N \subseteq \mathbb{Z}[x_1, \ldots, x_N]. $ For some results that we cover, $A^d_N$ is set of all integer-coefficients polynomials of degree $D$ in several variables or a subset of it characterized by some conditions that the polynomials satisfy. 

Let $|d|=d_1+d_2+...+d_N$ and $f(x_1, \ldots, x_N)=\sum_{|d|\leq D}c_dx_1^{d_1}x_2^{d_2}...x_N^{d_N},$ be degree $D$ polynomials in several variables. We use the notation $deg(f)=D$ to denote the degree the polynomial.
One is interested also in sets of the form
$$A^D_N=\{f(x_1, \ldots, x_N)\in \mathbb{Z}[x_1, \ldots, x_N]||deg(f)=D, |c_d| \leq B_d\},$$
with $B_d$ some constants. One can indeed consider more general sets with different upper and lower bounds on the coefficients, or more general conditions on the coefficients. For convenience, we drop the index $D$, and from now on we use the notation $A_N$ for $A^D_N.$

\subsection{Symbolic powers of ideals and constraints}

First, we introduce the notion of the $L^{th}$ symbolic power of a prime ideal in a Noetherian ring.
\begin{mydef}
Let P be a prime ideal in a Noetherian ring R.
Then the $L^{th}$  symbolic power of P is 

\begin{equation}
P^{(L)}=\{r \in R | sr \in P^L \hspace{2mm}\text{for some s}  \notin P\},
\end{equation}
where $P^L$ is the $L^{th}$ power of the ideal $P$. 
\end{mydef}
We note that $P^L$ is generated by the L-fold products of the elements from P.  For more details on these algebraic structures, we refer the reader to \cite{Surverysymbolicpowers}.
We emphasize that this definition of the symbolic power of an ideal is one of Algebraic nature. 
The Zariski-Nagata Theorem gives a geometric interpretation of the symbolic power. In appropriate settings, symbolic powers can be characterized by higher-order vanishing along the variety.$$f\in P^{(L)} \quad\Longleftrightarrow\quad f \text{ vanishes to order at least }L \text{ along the variety  }V(P).$$ 

In characteristic zero, this expressed in terms of derivatives: $$\frac{d^{\alpha}f(x) }{dx^{\alpha}}|_{V(P)}=0 \qquad \text{for all } |\alpha|<L.$$ The characterization using the derivatives (in the complex case) and the derivatives and the p-derivations in the integer case it is a criterion that guarantees that the polynomial that respects those conditions is in a certain order of a symbolic power of an ideal. We note that these are 'interpolation' type conditions of higher order when viewed reduced mod p. When the ideal is of a certain form, say $q=(p, x-a)$ then the interpolation criterion is direct (see the pictures below). In higher dimensions when the ideal has as generator an irreducible polynomial then the condition is rephrased accordingly (see the discussion after the Theorem). In addition, this type of criterion is very useful as in general one only has $P^L \subset P^{(L)}$ but the reverse direction is not clear. Thus, having a criterion for the belonging to symbolic powers is very useful to check if the data generated using a true polynomial (to which we add noise) belongs to the symbolic power. Indeed, this criterion allows one to check for polynomials that might not be obtained directly through products of the generators of the ideal (that belong to the power of the ideal not directly to the symbolic power). If one learns that the polynomial belongs to a symbolic power of a certain ideal then given the algebraic characterization of this notion of symbolic power one can use this information and the properties of symbolic powers in various settings. For example, then one knows directly that multiplying the polynomial with another polynomial from another symbolic power of an ideal will result in a polynomial in the symbolic power of another order. In general, the symbolic powers characterize a certain order of contact on the variety given by the generators so this is also useful to know if this polynomial appears along other polynomials (random or deterministic) in certain systems of equations. This helps analyze the structure of the solution sets (see below). In general one only has $ P^2\subseteq P^{(2)}, $ and strict containment $ P^2\subsetneq P^{(2)}  $ can occur. In \cite{Jeffriescite}, the authors establish a  characterization of elements of symbolic powers using mixed normal and  p-derivations.

In Machine Learning one searches for minimizers of a certain loss function. Also in this area, problems concerning interpolation of noisy data appear naturally (see \cite{murphy2012machine}. For a collection of data points $y_i (\omega)$, let us consider integer coefficients polynomials
$f(x) \in \mathbb{Z}[x].$ Let nonnegative $\lambda \in \mathbb{R_{+}},$ be a scalar modeling the penalty rate. For these objects, we consider the $ALS_d$ with the following loss function

\begin{align*}
&\quad \mathcal{L}_{\lambda}(f, \omega)=\frac{1}{n}\sum_{i=1}^n (T_1({f}\left(x_i \right))-{y_i(\omega)})^2+\lambda \left(T_2 f\right)^2+ \ldots+ \lambda \left(T_m f\right)^2\\
& 
\mathcal{L}_{\lambda}^*=\min_{f \in \mathbb{Z}[x] \cap A_1} \mathcal{L}_{\lambda}(f, \omega).
\end{align*}

We emphasize that although we have already used the letter $n$ when we introduced p-derivations, we will abuse the notation and use it also for the number of data-points.
One can consider more involved loss functions which have more terms in their structure. These transformations are either given by their definition, or by the rules they satisfy when considered on elements on the space and the operations between them. In parts of our analysis, we will consider $y_i(\omega)$ to be outputted by a true-polynomial $f_*(x) \in \mathbb{Z}[x]$ perturbed by noise.  We perform some parts of the analysis in these cases but the setting is general in the sense that $y_i(\omega)$ might come from general real-data not necessarily synthetic.

In the following in the one-variable case, we use the notation $f\hspace{1mm}  \text{mod} q$, for the remainder of the division between the $f$ polynomial and the polynomial in the generator of the ideal $q$. As an example, in the one-dimensional case we work with ideals of the form $q=(p, x-a)$. In the following, we consider $l \in \mathbb{N}$.

\begin{itemize}
\item[]1) A constraint of the form $T_2(f):= \frac{\partial^{b_2}}{\partial x^{b_2}} f \text{mod} \hspace{1mm} q$, for integer $b_2 \geq 1$, for some ideal of the form $q=(x-a)$, $a \in \mathbb{Z}$ is a \textit{interpolation with a given multiplicity/contact} type condition for the polynomial.

\item[] 2) A constraint of the form $T_2(f):= \delta_{s t, p}f \text{mod} \hspace{1mm} q$, for some ideal of the form $q=(p,x-a)$, $a \in \mathbb{Z}$ is one of an \textit{arithmetic type}.

\item[] 3) Constraints of the form $\delta^{b_1}_{s t, p}(\frac{\partial^{b_2}}{\partial x^{b_2}} f) \text{mod} \hspace{1mm} q$, for $b_1+b_2 \leq L-1$, for integer $L \geq 1$, for some ideal of the form $q=(p, x-a), $ $a \in \mathbb{Z}$, are of a \textit{mixed type} which capture \textit{an interpolation with a given multiplicity along with arithmetic conditions}.

\end{itemize}

One can consider the case in which the ideal does not contain the prime number p. Let us consider the ALS in which the  transformation $T_2(\cdot)$ involves the standard derivatives. Then, one obtains loss functions with derivatives of higher orders. 
In addition, one can consider other examples of constraints (see \cite{BishopML}, p.164) for example of constraints which control the size of the coefficients.

\subsection{One-variable case}

We first consider the following case. 
Let $p$ be a fixed prime number. Let $M_p$ be a set of ideals that contain a prime number of the form $(p, x-a),$ for $ a \in \mathbb{Z}$, or in a subset of it. For simplicity, in order to also match existing literature (\cite{BishopML}), we consider the least-squares residuals. The real parameter $\lambda$ is called the penalty rate and it penalizes errors from the part of the loss function that measures the order of the roots of the polynomial.

 One model we consider consists of noisy observations $y_1(\omega)$, $y_2(\omega),\ldots, y_n(\omega)$ obtained as a noisy perturbations of an integer-coefficient polynomial.
That is, we consider a model for our data in which they are generated using a deterministic polynomial with noise (it can be Gaussian noise or other type of noise). Then, one wants to study the landscape of the loss functions for these data sets.  Optimization problems in which one has interpolation conditions in general or for the standard derivative or gradient as constraints appear in the literature (see \cite{veryimportantsoftandhardlagrangepaper2023augmented}, \cite{booksurfacereconstructionpage5322015scale}, \cite{CSimagereconstructpaperrobust}).

Let integers $b_1,b_2$ such that $b_1+b_2 \leq L-1$, for some integer $L\geq 1$. The parameter $L \in \{1, 2, \ldots, L_{max}\}$, $L_{max} \in \mathbb{N}$, controls the corresponding level of the symbolic power of the ideal considered (see (\cite{Jeffriescite}). For simplicity of the exposure, we consider the same parameter $\lambda$ for all the constraint terms, but one can consider a more general version in which each constraint has its own parameter $\lambda_j,$ $j \in \{1, 2, ..., m\}.$ Let us consider the $ALS_d$ with the following sequence of loss functions indexed by parameters along with the corresponding axioms of the elements involved.

\begin{align*}
& \quad \mathcal{L}_{L, \lambda, q}(f, \omega)=\frac{1}{n}\sum_{i=1}^n \ell({f}\left(x_i\right)-{y_i(\omega)})+\lambda \sum_{b_1+b_2 \leq L-1}\left(\delta^{b_1}_{s t, p}\left(\frac{\partial^{b_2}}{\partial x^{b_2}} f\right) \text{mod} \hspace{1mm} q\right)^2 \\
& \mathcal{L}_{L, \lambda, M_p}^*=\min_{ q \in M_p}\min_{f \in \mathbb{Z}[x] \cap A_1} \mathcal{L}_{L, \lambda, q}(f, \omega).
\end{align*}
 Note that the expression in the regularization terms can be understood as follows. After the operations are applied, we obtain from the polynomial $f(x)$, a new polynomial, then we consider the remainder of that polynomial with respect to the polynomial $x-a$, which is an integer (degree of the remainder is zero in this context). We look at the closest multiple of $p$ close to that integer. We use the convention, that for integer numbers $n$ we consider $d_p(n):=
\min_{k\in\mathbb Z}|n-kp|.$ Thus, we have $|h\bmod q|:=d_p(h(a))$.Then, the squared gives a positive contribution to the loss function whenever the polynomial does not pass exactly through a multiple of p.
 
For the least-squares case, we obtain
\begin{equation}
\mathcal{L}_{L, \lambda, q}(f, \omega)=\frac{1}{n}\sum_{i=1}^n({f}\left(x_i \right)-{y_i(\omega)})^2+\lambda \sum_{b_1+b_2 \leq L-1}\left(\delta^{b_1}_{s t, p}\left(\frac{\partial^{b_2}}{\partial x^{b_2}} f\right) \text{mod} \hspace{1mm} q\right)^2,
\end{equation}

where $\delta_{\text{st},p}: \mathbb{Z}[x] \to \mathbb{Z}[x]$ is the standard $p$-derivation 

$$
\delta_{\text{st},p}(f)=\frac{f\left(x^p\right)-f\left(x\right)^p}{p}.
$$

In general, for p-derivations on rings we have the following.
A set-theoretic map $\delta: R \rightarrow R$ is a $p$-derivation if it satisfies the following Rules/Axioms. 

\begin{itemize}

\item[] a) Axiom/Rule 1: $\delta(1)=0$ 
\end{itemize}

$\delta$ satisfies the following identities for all $x, y \in R$ :
\begin{itemize}
\item[] b) Axiom/Rule 2: $$
\delta(x y)=x^p \delta(y)+y^p \delta(x)+p \delta(x) \delta(y)
$$
\item[] c) Axiom/Rule 3: $$
\delta(x+y)=\delta(x)+\delta(y)+\mathcal{C}_p(x, y),
$$
where $\mathcal{C}_p(X, Y)=\frac{X^p+Y^p-(X+Y)^p}{p} \in \mathbb{Z}[X, Y]$.

\end{itemize}

We emphasize that one can work directly with definitions or with the defining axioms that characterize these transformations.  We work with the axioms as they are important from the computations viewpoint when we explore the landscape of the loss functions. 

Depending on the problem that one studies one can add more properties of these $p$-derivations to the $ALS_d$. For example, from the Axiom/Rule 2 along with induction, one can obtain an analogue of the power rule for these transformation:
\begin{itemize}
    \item[] Power Rule Analogue: $$
\delta_p\left(n^a\right)=\sum_{i=1}^a\binom{a}{i} p^{i-1} \delta_p(n)^i n^{(a-i) p}.
$$
\end{itemize}

In addition, one can also deduce other properties of these transformations depending on the operations between elements that appear naturally in the problem considered from the Axioms/Rules 1), 2) and 3).

We work with these structures based on axioms as from these rules one can generate new properties which might be needed in the problems under consideration. We take the view that a p-derivation is a transformation that satisfies the Axioms 1, 2, and 3 and study the corresponding loss functions involving these transformations.

We consider a model in which the set $M_p$ where we search for these ideals can be considered is the same for all the realizations, We consider the set of deterministic ideals that we search over $M_p=\{q_1, q_2,\ldots, q_s\}$ for some $s\geq 1$, and ideals of the form $q=(p, x-a_i)$, for $i \in \{1, \ldots, s\}$ in order to find if the potential ground-truth polynomial which generated the data after perturbation is a member of any symbolic power of an ideal among the search space, or close to being a member. This version is deterministic and it might appear in various contexts, including the ones in which you see the potentially recovered polynomial as being part of a system of polynomials for which one knows already that there are roots at the integer points which describe (part of) the ideals. Thus, one wants to learn of the potential ground truth polynomial also has roots there. 

In a later section, we do a random search version for these types of problems.

\begin{rmk}[Motivation to consider the soft version]

 The soft penalty version can be better in practice if no strong prior information is known. For example, one might not know a-priori, how many order of derivatives and p-derivations to take. Moreover, when considering the soft formulation of the problem, one can also study the values of the loss functions on polynomials which respect a part but not all of the conditions. Thus, one also sees which conditions fail for the corresponding polynomial to be in the symbolic power of the ideal. Moreover, simultaneous ideal membership across multiple anchors is generically infeasible. The hard constraint then forces the estimator into a degenerate corner.
\end{rmk}

\begin{figure}[H]
        \centering
        \includegraphics[width=.45\linewidth, trim=1.5cm 1cm 1.5cm 1cm, clip]{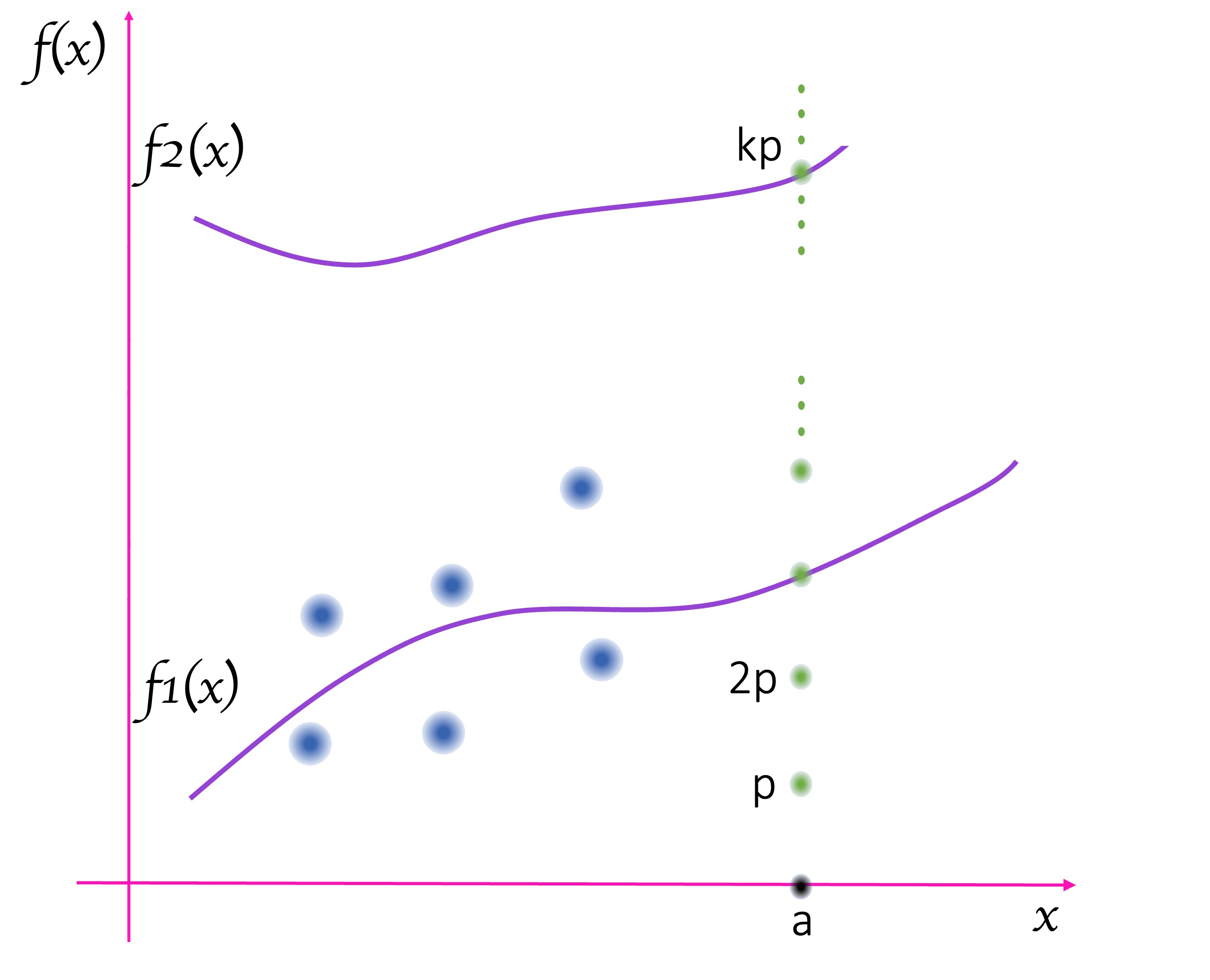}
        \caption{For $q=(p,x-a)$ for some $a \in \mathbb{Z}$, this is a schematic representation of an integer-coefficient polynomials that give zero contribution in a part of the penalty term of a loss function for which $q \in M_p,$ but which give different least-squares contribution as $f_1(x)$ is closer to the data (represented as blue dots) than $f_2(x).$}
\end{figure}

\begin{figure}[H]
        \centering
        \includegraphics[width=.45\linewidth, trim=1.5cm 1cm 1.5cm 1cm, clip]{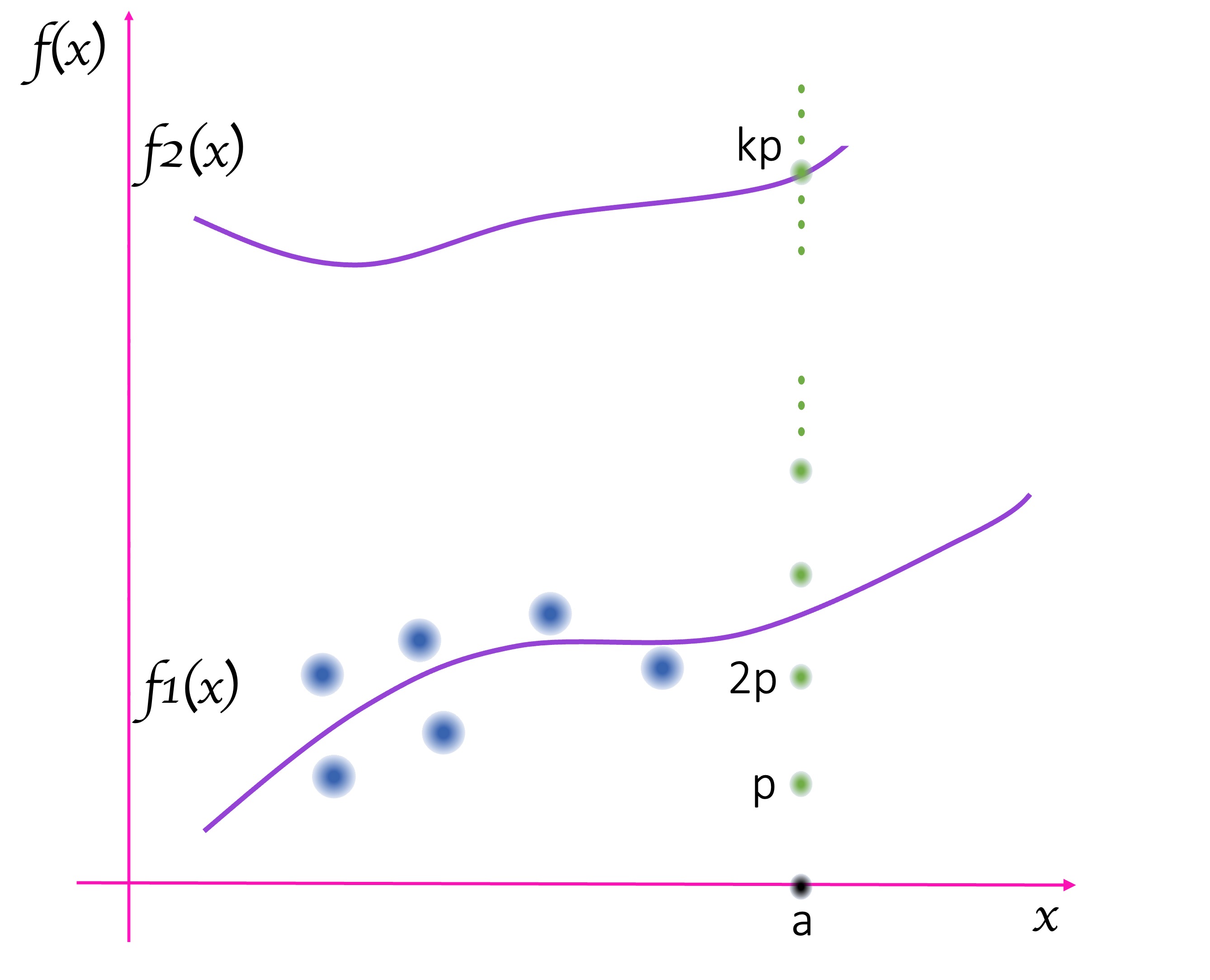}
        \caption{For $q=(p,x-a)$ for some $a \in \mathbb{Z}$, this is a schematic representation of an integer-coefficient polynomial that gives non-zero contribution in a part of the penalty term of a loss function for which $q \in M_p$.}
\end{figure}

\subsubsection{Other variants of $\text{ALS}_d$ for polynomials with multiple roots}

In addition, one can search for polynomials with multiple roots of certain multiplicity and design a loss function with this purpose. For this, let us consider the prime ideals $q_j=(p, x-a_j)$, for $a_j \in \mathbb{Z}.$

Let $J \in \mathbb{N}$ and let  $\tilde{q}_J:=\cap_{j=1}^Jq_j$. Then, one to recover polynomials  with roots of certain multiplicities from noisy data, one is interested in an $ALS_d$ with the same objects along with the following sequence of loss functions. 

\begin{align*}
& \quad \mathcal{L}_{L, \lambda, \tilde{q}_J}(f, \omega)=\frac{1}{n}\sum_{i=1}^n ({f}\left(x_i\right)-{y_i(\omega)})^2+\lambda \sum_{b_1+b_2 \leq L-1} \left(\delta^{b_1}_{s t, p}\left(\frac{\partial^{b_2}}{\partial x^{b_2}} f\right) \text{mod} \hspace{1mm} \tilde{q}_J\right)^2 \\
& \mathcal{L}_{L, \lambda, J}^*=\min_{\tilde{q}_j}\min_{f \in \mathbb{Z}[x] \cap A_1} \mathcal{L}_{L, \lambda, \tilde{q}_J}(f, \omega).
\end{align*}

We note that we consider the same multiplicity $L \geq 1$ for the root in this notion of the loss function, but one can consider a more general version with various multiplicities of roots. 
\begin{align*}
& \quad \mathcal{L}_{L
_J, \lambda, \tilde{q}_J}(f, \omega)=\frac{1}{n}\sum_{i=1}^n ({f}\left(x_i\right)-{y_i(\omega)})^2+\lambda \sum_{b_1+b_2 \leq L_j-1} \left(\delta^{b_1}_{s t, p}\left(\frac{\partial^{b_2}}{\partial x^{b_2}} f\right) \text{mod} \hspace{1mm} \tilde{q}_J\right)^2 \\
& \mathcal{L}_{L_J, \lambda, J}^*=\min_{\tilde{q}_j}\min_{f \in \mathbb{Z}[x] \cap A_1} \mathcal{L}_{L_J, \lambda, \tilde{q}_J}(f, \omega).
\end{align*}

We note that one can consider versions of these loss functions for more prime numbers.

\subsection{Multi-variables case}

Let p be a prime number. Let $M_p$ be a set of ideals that contain a prime number of the form $(p, x_1-a_1, x_2-a_2, \ldots, x_N-a_N),$ for $ a_i \in \mathbb{Z}$, $i \in \{1, 2, \ldots, N\}$ or in a subset of it. The remaining parameters are the same as in the one-dimensional case.  
Let us consider $J \in \mathbb{N}, J \geq 1$, and the indices $j_1+j_2+...+j_N<J.$ 

Let  $g \in \mathbb{Z}\left[x_1, \ldots, x_N \right]$. For these objects, one can consider the following multi-dimensional sequence of loss functions 

\begin{align}
\mathcal{L}_{L, \lambda, q}(g, \omega)&=\frac{1}{n}\sum_{i=1}^n
({g}\left(x_1,  \ldots, x_N\right)-{y_i(\omega)}
)^2\nonumber\\
&+\lambda \sum_{b_1+j_1+\ldots+j_N\leq L-1}||\delta^{b_1}_{s t, p}\left(\frac{\partial^{j_1+...j_N}}{\partial x_1^{j_1} \ldots \partial x_N^{j_N}} g\left(x_1, \ldots, x_N\right)\right) \text{mod} \hspace{1mm} q||^2
\end{align}
\begin{equation}
\mathcal{L}_{}^*=\min_{q \in M_p}\min_{g \in \mathbb{Z}\left[x_1, \ldots, x_N\right] \cap A_N}\mathcal{L}(g, \omega). \\
\end{equation}

\begin{rmk}
The discussion in the one-dimensional case is also valid in the multi-dimensional case. Indeed, one can consider the $ALS_d$ of multi-variables polynomials with the standard derivative.
\end{rmk}



The following result is also motivated by the fact that in this setting there is no continuous gradient method as the dataset is discrete. One is interested to study the loss landscape. One aim of this result is to show how the loss functions landscape is influenced by the axioms of the operations involved-in this case, usual derivatives and p-derivations. In the case of integer coefficients polynomials, addition between the elements is a way to explore the space. In other spaces such as graphs or lattices other operations might be natural when studying the Loss functions landscape. For example, one might study other data-sets such as the ones formed by polynomials with coefficients of the form $a +bh$ where $h$ is some integer step and $a, b \in \mathbb{Z}.$

In the theorem below we focus on a certain class of prime ideals (see the remark after the Theorem) and on the derivatives and p-derivations but a similar statement can be obtained using some other transformations given by other axioms. To simplify the notation, we drop the index $i$ in $(x^i_1, \ldots, x^i_N)$, $i\leq n$, for each evaluation point and we simply write $(x_1, \dots, x_N).$

\begin{thrm}\label{result1}
Let $q$ be a prime ideal from a certain class of $\mathbb{Z}[x_1, \ldots, x_N]$ containing the prime number p. For each realization of the noisy data (specific sample dataset) $(y_1(\omega), \ldots, y_n(\omega))$ we have the following.  \\
1)Let $f_0(x_1,\ldots, x_N)$ be an integer-coefficients polynomial. Let us consider the transformation $$T_1^r:=f_0(x_1,\ldots, x_N)+r(x_1,\ldots, x_N).$$  Then, we have that
\begin{align}
&\mathcal{L}_{L, \lambda, q}(f_0+r, \omega)=\frac{1}{n}\sum_{i=1}^n
({f_0}\left(x_1,  \ldots, x_N \right)-{y_i(\omega)}
)^2+2r(x_1, \ldots x_N)(f_0(x_1, \ldots, x_N)-y_i(\omega))+(r(x_1, \ldots, x_N))^2\nonumber\\
&+\lambda \sum_{j_1+\ldots+j_N\leq L-2}||\left(\delta_{s t, p}\frac{\partial^{j_1+...j_N}}{\partial x_1^{j_1} \ldots \partial x_N^{j_N}} f_0\left(x_1, \ldots, x_N\right)+ \delta_{s t, p}\frac{\partial^{j_1+...j_N}}{\partial x_1^{j_1} \ldots \partial x_N^{j_N}}r(x_1, \ldots, x_N)\right)\text{mod} \hspace{1mm} q\nonumber\\
&-\sum_{i=1}^{p-1} \frac{\binom{p}{i}}{p} (\frac{\partial^{j_1+...j_N}}{\partial x_1^{j_1} \ldots \partial x_N^{j_N}}f_0(x_1, \ldots, x_N))^i (\frac{\partial^{j_1+...j_N}}{\partial x_1^{j_1} \ldots \partial x_N^{j_N}}r(x_1, \ldots, x_N))^{p-i} \text{mod} \hspace{1mm} q||^2+\mathcal{E}(b_1 \neq 1),
\end{align}
where we used the notation $\mathcal{E}(b_1\neq 1)$ for the extra terms in the expansion.

2)Let $\tau \geq 2$. Let us consider $f(x_1,\ldots, x_N)$ be an integer coefficient polynomial as before. Let us consider another transformation $\tilde{T}_\tau^r(f)$ with 
\begin{equation}
\tilde{T}_\tau^r(f):=f_0(x_1,\ldots, x_N)+r^\tau(x_1,\ldots, x_N).
\end{equation}
Then, we have that
\begin{align}
&\mathcal{L}_{L, \lambda, q}(f_0+r^\tau, \omega)=\frac{1}{n}\sum_{i=1}^n
({f_0}\left(x_1,  \ldots, x_N\right)-{y_i(\omega)}
)^2+2r^\tau(x_1, \ldots x_N)(f_0(x_1, \ldots, x_N)-y_i(\omega))+(r^\tau(x_1, \ldots, x_N))^2\nonumber\\
&+\lambda \sum_{j_1+\ldots+j_N\leq L-2}||\left(\delta_{s t, p}\frac{\partial^{j_1+...j_N}}{\partial x_1^{j_1} \ldots \partial x_N^{j_N}} f_0\left(x_1, \ldots, x_N\right)+ \delta_{s t, p}\frac{\partial^{j_1+...j_N}}{\partial x_1^{j_1} \ldots \partial x_N^{j_N}}r^\tau(x_1, \ldots, x_N)\right)\text{mod} \hspace{1mm} q\nonumber\\
&-\sum_{i=1}^{p-1} \frac{\binom{p}{i}}{p} (\frac{\partial^{j_1+...j_N}}{\partial x_1^{j_1} \ldots \partial x_N^{j_N}}f_0(x_1, \ldots, x_N))^i (\frac{\partial^{j_1+...j_N}}{\partial x_1^{j_1} \ldots \partial x_N^{j_N}}r^\tau(x_1, \ldots, x_N))^{p-i} \text{mod} \hspace{1mm} q||^2+\mathcal{E}(b_1 \neq 1).
\end{align}

\end{thrm}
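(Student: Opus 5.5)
The plan is to substitute $f=f_0+r$ (respectively $f=f_0+r^\tau$) into the least-squares form of $\mathcal{L}_{L,\lambda,q}$ from the multi-variable subsection and expand the two parts of the loss separately. The quadratic data-fitting term and the penalty term interact with the perturbation through different axioms, so I treat them one at a time.

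\textbf{Data-fitting term.} This part is purely algebraic. For each evaluation point I write
\[
(f_0+r-y_i(\omega))^2=(f_0-y_i(\omega))^2+2r\,(f_0-y_i(\omega))+r^2 ,
\]
and then average over $i\le n$. This gives the first line of the claimed identity, with the cross term and the $r^2$ term read inside the average.

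\textbf{Penalty term.} I would rely only on the axioms. First, the usual partial derivatives are linear, so
\[
\partial^{j}(f_0+r)=\partial^{j}f_0+\partial^{j}r ,
\]
where $\partial^j$ abbreviates $\frac{\partial^{j_1+\cdots+j_N}}{\partial x_1^{j_1}\cdots\partial x_N^{j_N}}$. Next, for the terms with exactly one $p$-derivation, i.e.\ $b_1=1$ (so that $j_1+\cdots+j_N\le L-2$), I apply Axiom/Rule 3, the sum rule, with $X=\partial^j f_0$ and $Y=\partial^j r$:
\[
\delta_{st,p}(X+Y)=\delta_{st,p}X+\delta_{st,p}Y-\sum_{i=1}^{p-1}\tfrac{\binom{p}{i}}{p}X^iY^{p-i}.
\]
This is exactly the bracket displayed inside $\|\cdot\bmod q\|^2$. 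Reduction mod $q$ is a ring homomorphism to $\mathbb{Z}/p$ (evaluation at the anchor point followed by reduction mod $p$), so it commutes with the sums and products above and can be applied term by term, as written in the statement.

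\textbf{Remaining terms.} All other indices, namely $b_1=0$ and $b_1\ge 2$, are collected into $\mathcal{E}(b_1\neq 1)$. The $b_1=0$ terms are simply $\|(\partial^j f_0+\partial^j r)\bmod q\|^2$. For $b_1\ge2$ one iterates the sum rule, since $\delta_{st,p}$ of a sum of three summands (the two $\delta$'s and $\mathcal{C}_p$) again produces cross polynomials. No closed form is claimed for these, so I only need to note that they are finite expressions determined by the axioms. This step is the main obstacle, because the statement leaves $\mathcal{E}$ unspecified. I would define it precisely as the difference between the full penalty sum and the $b_1=1$ part, which makes the identity hold by construction.

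\textbf{Part 2.} The argument is identical, with $r$ replaced by the integer polynomial $r^\tau$. Nothing about the structure of $r$ was used; $r^\tau$ only needs to lie in $\mathbb{Z}[x_1,\dots,x_N]$. If desired, $\delta_{st,p}(\partial^j r^\tau)$ can be further expanded with the Leibniz rule for $\partial$ and the Power Rule Analogue for $\delta_{st,p}$, but this is not needed for the displayed formula.
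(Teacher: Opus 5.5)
Your proposal is correct and follows the paper's proof: substitute $f_0+r$ (or $f_0+r^\tau$), expand the square in the data term, use linearity of $\partial^j$, apply the $p$-derivation sum rule to the $b_1=1$ terms, and absorb all remaining terms ($b_1=0$ and $b_1\ge 2$) into $\mathcal{E}(b_1\neq 1)$, which the paper also treats as a residual defined by the expansion. Your explicit handling of the $b_1=0$ terms, and your observation that the multiplication axioms are not needed for the displayed formula in part 2, are slightly cleaner than the paper's write-up; one small caveat is that for ideals such as $(p,x_1-x_2)$ the reduction map lands in $\mathbb{Z}[x_1,\ldots,x_N]/q$ rather than $\mathbb{Z}/p$, but this does not affect your argument, since any quotient map is a ring homomorphism.
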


\begin{rmk}
We restrict to cases in which the prime ideals are of a certain format such that one can compare the resulting polynomial after the relations obtained from the ideal generators were considered.
We emphasize that the notation $(\cdot) \mod q$ means that we compare the resulting polynomial with the zero polynomial after reducing mod p. For example, if $q=(p, x-a) $ in one dimension then we compute the remainder of the resulting polynomial mod p at the point $x=a$. For more variables, for example, if $h(x_1, x_2)=x_1-x_2, $ then we compare the resulting polynomial (after the corresponding transformations involving standard and p-derivations) with the 0- polynomial mod p after we make the identification  $x_1=x_2$ in its variables.
For the resulting polynomial, if not identically zero, we compute the norm using the corresponding values of the coefficients. We use the convention, that for each integer coefficient $c_i$ we use $d_p(c_i)=\min_{k\in\mathbb Z}|c_i-kp|,$ when computing the norm. Two different polynomials might receive the same penalty but their data-fit terms might be different. In addition, one can add an extra squared-norm of the polynomial penalty term in the loss function if one is interested in polynomials with small coefficients in their models. In general, one can consider more refined versions of this analysis.
\end{rmk}

\begin{proof}

Let us consider a fixed polynomial $f_0(x_1, \ldots, x_N) \in A_N \subseteq \mathbb{Z}[x_1, \ldots, x_N], $ and let us consider explorations of the landscape from this fixed polynomial. First we study this type of explorations $f(x_1,\ldots, x_N)+r(x_1, \ldots, x_N)$. 
Then, 
\begin{align}
&\mathcal{L}_{L, \lambda, q}(f_0+r, \omega)=\frac{1}{n}\sum_{i=1}^n
({f_0}\left(x_1,  \ldots, x_N \right)+r(x_1, \ldots x_N)-{y_i(\omega)}
)^2\nonumber\\
&+\lambda \sum_{b_1+j_1+\ldots+j_N\leq L-1}||\delta^{b_1}_{s t, p}\left(\frac{\partial^{j_1+...j_N}}{\partial x_1^{j_1} \ldots \partial x_N^{j_N}} f_0\left(x_1, \ldots, x_N\right)+\frac{\partial^{j_1+...j_N}}{\partial x_1^{j_1} \ldots \partial x_N^{j_N}}r(x_1, \ldots, x_N)\right) \text{mod} \hspace{1mm} q||^2\nonumber\\
&=\frac{1}{n}\sum_{i=1}^n
({f_0}\left(x_1,  \ldots, x_N\right)-{y_i(\omega)}
)^2+2r(x_1, \ldots x_N)(f_0(x_1, \ldots, x_N)-y_i(\omega))+r(x_1, \ldots, x_N)^2\nonumber\\
&+\lambda \sum_{j_1+\ldots+j_N\leq L-2}||\delta_{s t, p}\left(\frac{\partial^{j_1+...j_N}}{\partial x_1^{j_1} \ldots \partial x_N^{j_N}} f_0\left(x_1, \ldots, x_N\right)+\frac{\partial^{j_1+...j_N}}{\partial x_1^{j_1} \ldots \partial x_N^{j_N}}r(x_1, \ldots, x_N)\right) \text{mod} \hspace{1mm} q||^2\nonumber\\
&+\lambda\sum_{j_1+\ldots+j_N\leq L-3}||\delta^2_{s t, p}\left(\frac{\partial^{j_1+...j_N}}{\partial x_1^{j_1} \ldots \partial x_N^{j_N}} f_0\left(x_1, \ldots, x_N\right)+\frac{\partial^{j_1+...j_N}}{\partial x_1^{j_1} \ldots \partial x_N^{j_N}}r(x_1, \ldots, x_N)\right) \text{mod} \hspace{1mm} q||^2+\mathcal{E}(b_1 \geq 3)\nonumber\\
&=\frac{1}{n}\sum_{i=1}^n
({f_0}\left(x_1,  \ldots, x_N \right)-{y_i(\omega)}
)^2+2r(x_1, \ldots x_N)(f_0(x_1, \ldots, x_N)-y_i(\omega))+r(x_1, \ldots, x_N)^2\nonumber\\
&+\lambda \sum_{j_1+\ldots+j_N\leq L-2}||\left(\delta_{s t, p}\frac{\partial^{j_1+...j_N}}{\partial x_1^{j_1} \ldots \partial x_N^{j_N}} f_0\left(x_1, \ldots, x_N\right)+ \delta_{s t, p}\frac{\partial^{j_1+...j_N}}{\partial x_1^{j_1} \ldots \partial x_N^{j_N}}r(x_1, \ldots, x_N)\right)\text{mod} \hspace{1mm} q\nonumber\\
&-\sum_{i=1}^{p-1} \frac{\binom{p}{i}}{p} (\frac{\partial^{j_1+...j_N}}{\partial x_1^{j_1} \ldots \partial x_N^{j_N}}f_0(x_1, \ldots, x_N))^i (\frac{\partial^{j_1+...j_N}}{\partial x_1^{j_1} \ldots \partial x_N^{j_N}}r(x_1, \ldots, x_N))^{p-i} \text{mod} \hspace{1mm} q||^2+\mathcal{E}(b_1 \neq 1).
\end{align}
We used the notation $\mathcal{E}(b_1 \neq 1)$ for the extra terms in the expansion.

 Let $\tau \geq 1.$ For the power-steps moves, we need the multiplication Axiom for both normal derivatives and p-derivations, as well.

\begin{align}
&\mathcal{L}_{L, \lambda, q}(f_0+r^\tau, \omega)=\frac{1}{l}\sum_{i=1}^l
({f_0}\left(x_1,  \ldots, x_N \right)+r^{\tau}(x_1, \ldots x_N)-{y_i(\omega)}
)^2\nonumber\\
&+\lambda \sum_{b_1+j_1+\ldots+j_N\leq L-1}||\delta^{b_1}_{s t, p}\left(\frac{\partial^{j_1+...j_N}}{\partial x_1^{j_1} \ldots \partial x_N^{j_N}} f_0\left(x_1, \ldots, x_N\right)+\frac{\partial^{j_1+...j_N}}{\partial x_1^{j_1} \ldots \partial x_N^{j_N}}r^{\tau}(x_1, \ldots, x_N)\right) \text{mod} \hspace{1mm} q||^2\nonumber\\
\end{align}
We further obtain
\begin{align}
&\frac{1}{n}\sum_{i=1}^n
({f_0}\left(x_1,  \ldots, x_N \right)-{y_i(\omega)}
)^2+2r^\tau(x_1, \ldots x_N)(f_0(x_1, \ldots, x_N)-y_i(\omega))+(r^\tau(x_1, \ldots, x_N))^2\nonumber\\
&+\lambda \sum_{j_1+\ldots+j_N\leq L-2}||\delta_{s t, p}\left(\frac{\partial^{j_1+...j_N}}{\partial x_1^{j_1} \ldots \partial x_N^{j_N}} f_0\left(x_1, \ldots, x_N\right)+\frac{\partial^{j_1+...j_N}}{\partial x_1^{j_1} \ldots \partial x_N^{j_N}}r^\tau(x_1, \ldots, x_N)\right) \text{mod} \hspace{1mm} q||^2\nonumber\\
&+\lambda\sum_{j_1+\ldots+j_N\leq L-3}||\delta^2_{s t, p}\left(\frac{\partial^{j_1+...j_N}}{\partial x_1^{j_1} \ldots \partial x_N^{j_N}} f_0\left(x_1, \ldots, x_N\right)+\frac{\partial^{j_1+...j_N}}{\partial x_1^{j_1} \ldots \partial x_N^{j_N}}r^\tau(x_1, \ldots, x_N)\right) \text{mod} \hspace{1mm} q||^2+\mathcal{E}(b_1 \geq 3)\nonumber\\
\end{align}
\begin{align}
&=\frac{1}{n}\sum_{i=1}^n
({f_0}\left(x_1,  \ldots, x_N \right)-{y_i(\omega)}
)^2+2r^\tau(x_1, \ldots x_N)(f_0(x_1, \ldots, x_N)-y_i(\omega))+(r^\tau(x_1, \ldots, x_N))^2\nonumber\\
&+\lambda \sum_{j_1+\ldots+j_N\leq L-2}||\left(\delta_{s t, p}\frac{\partial^{j_1+...j_N}}{\partial x_1^{j_1} \ldots \partial x_N^{j_N}} f_0\left(x_1, \ldots, x_N\right)+ \delta_{s t, p}\frac{\partial^{j_1+...j_N}}{\partial x_1^{j_1} \ldots \partial x_N^{j_N}}r^\tau(x_1, \ldots, x_N)\right)\text{mod} \hspace{1mm} q\nonumber\\
&-\sum_{i=1}^{p-1} \frac{\binom{p}{i}}{p} (\frac{\partial^{j_1+...j_N}}{\partial x_1^{j_1} \ldots \partial x_N^{j_N}}f_0(x_1, \ldots, x_N))^i (\frac{\partial^{j_1+...j_N}}{\partial x_1^{j_1} \ldots \partial x_N^{j_N}}r^\tau(x_1, \ldots, x_N))^{p-i} \text{mod} \hspace{1mm} q||^2+\mathcal{E}(b_1 \neq 1).
\end{align}

\end{proof}

The result also gives some information on the complexity of the landscape and the corresponding challenges to study the local minima in the corresponding frameworks. In addition, given the positive and negative signs of the coefficients in the $r(\cdot)$  polynomial along with the rules of the transformations, one obtains negative and positive terms, thus there are cancellations that one can study in these problems. 

\begin{rmk}
In order to make the structure of the terms appearing more apparent, we consider the particular case of the one-variable polynomials. Let us consider a simple example with $\lambda=1$ to ease the notation.

We have

\begin{align}
&\mathcal{L}_{3, \lambda, q}(f_0(x)+r^\tau(x), \omega)=\frac{1}{n}\sum_{i=1}^n
({f_0}\left(x\right)-{y_i(\omega)}
)^2+2r^\tau(x)(f_0(x)-y_i(\omega))+(r^\tau(x))^2\nonumber\\
&+((f_0(x)+r^\tau(x))\text{mod}q)^2+((\frac{\partial}{\partial x}(f_0(x)+r^\tau(x)))\text{mod}q)^2+((\delta_{st, p}(f_0(x)+r^\tau(x)))\text{mod}q)^2\nonumber\\
&+((\frac{\partial^2}{\partial x^2}(f_0(x)+r^\tau(x)))\text{mod}q)^2+
(\delta_{st, p}(\frac{\partial}{\partial x}(f_0(x)+r^\tau(x)))\text{mod}q)^2+
(\delta^2_{st, p}(f_0(x)+r^\tau(x))
\end{align}
The usual derivative satisfies Linearity, Leibniz Rule, and one has also the chain rule that we use along with the axioms of the p-derivations to obtain

\begin{align}
&((f_0(x)+r^\tau(x))\text{mod}q)^2+((\frac{\partial}{\partial x}f_0(x)+\tau r^{\tau-1}(x)(\frac{\partial}{\partial x}r(x))\text{mod}q)^2\nonumber\\
&+(\delta_{st, p}f_0(x)+ \delta_{st, p}(r^\tau(x)))\text{mod}q)^2-\sum_{i=1}^{p-1} \frac{\binom{p}{i}}{p}(f_0(x))^i(r^\tau(x))^{p-i})\text{mod}q)^2+\mathcal{E}\nonumber\\
&=((f_0(x)+r^\tau(x))\text{mod}q)^2+((\frac{\partial}{\partial x}f_0(x)+\tau r^{\tau-1}(x)(\frac{\partial}{\partial x}r(x))\text{mod}q)^2\nonumber\\
&+(\delta_{st, p}f_0(x)+ r^p\delta_{st, p}(r^{\tau-1}(x))+ (r^{\tau-1})^p\delta_{st, p}r(x)+ p\delta_{st, p}(r^{\tau-1}\delta_{st, p}(r)\text{mod}q)^2- \sum_{i=1}^{p-1} \frac{\binom{p}{i}}{p}(f_0(x))^i(r^\tau(x))^{p-i})\text{mod}q)^2+\mathcal{E}\nonumber\\
\end{align}
where $\mathcal{E}$ is a notation for the extra terms of higher order derivatives, p-derivations, and mixed ones. 
Then, for general $L\geq 3$ one adds more terms to the loss function.

\end{rmk}

\begin{rmk}[Study of other quantities from the axioms]
As a second remark we discuss how the axioms can be used to generate other quantities to be studied. 
Let $T_{n}^r(f)=f(x_1,\ldots, x_N)+r_1(x_1,\ldots, x_N)+ \ldots +r_{n}(x_1, \ldots, x_N).$
We recall that we take the view that we study the axiomatic structures as they give us a framework to recover other properties of the transformations from their axioms. For example, for $b_1=1$ , $b_2=0$, in order to compute $$\delta_{st, p}(T_n^r(f))=\delta_{st, p}(T_{n-1}^r(f))+\delta_{st, p}(r_N(x_1, \ldots, x_N))-\sum_{i=1}^{p-1} \frac{\binom{p}{i}}{p} (T_{n-1}^r(f))^i (r_N(x_1\ldots,x_N))^{p-i}, $$
with $T_{n-1}^r(f)=f(x_1,\ldots, x_N)+r_1(x_1,\ldots, x_N)+ \ldots +r_{n-1}(x_1, \ldots, x_N).$  We can then apply sequentially the axioms for $T_{N-1}$ and so on, and obtain a formula for the transformation $\delta_{st, p}(T^r_n(f))$ (which we can see as a property) via the Sum Rule/Axiom and expansions. We can do a similar analysis for any choice of $b_1, b_2$ with $b_1+b_2 \leq L-1. $ One can take this view in general for other transformations as well that can be reduced to the axioms on basic operations that helps us explore the data. 

\end{rmk}


We study some properties of the loss functions in the following.
\begin{Proposition}\label{proploss} Let us consider the observed data $(y_1(\omega), \ldots, y_n(\omega))$ fixed. Then the loss functions $\mathcal{L}_{L,\lambda, q}$ has the following properties

\begin{itemize}
    \item[] 1)For any choice of the hyperparameters $\lambda_j>0$, $j \in \{1, \ldots, m\}$ the value of the loss function is given by the least-square term if and only if f satisfies all the constraints. 

    \item[] 2)For fixed $L$ and $A_N$, for any choice of the hyperparameters $\lambda_j$, $j \in \{1,\ldots, m\}$, if $M_p \subset M'_p $ then the corresponding loss functions satisfy $\mathcal{L}^{'}\leq \mathcal{L}$.

    \item[] 3)For fixed $L$ and $M_p$, for any choice of the hyperparameters $\lambda_j$, $j \in \{1,\ldots, m\}$, if $A_N \subset \tilde{A}_N$, then $\tilde{\mathcal{L}} \leq \mathcal{L}. $
    
        \item[] 4)For fixed $A_N$ and $M_p$, for any choice of the hyperparameters $\lambda_j$, $j \in \{1,\ldots, m\}$, if $L_1 < L_2$, then $\tilde{\mathcal{L}_{L_1}} \leq \mathcal{L}_{L_2}. $
    
\end{itemize}
\end{Proposition}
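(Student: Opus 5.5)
The plan is to write the loss as a data term plus a nonnegative penalty and reduce each item to one of two facts: a sum of nonnegative terms vanishes exactly when every term vanishes, and a minimum over a larger set is no larger. Write
\[
\mathcal{L}_{L,\lambda,q}(f,\omega)=l(f,y)+\sum_{j}\lambda_j P_j(f,q),
\]
where $l(f,y)=\frac{1}{n}\sum_{i}(f(x_i)-y_i(\omega))^2$. Each $P_j(f,q)=\|\delta^{b_1}_{st,p}(\partial^{b_2}f)\bmod q\|^2$ is a sum of squares of quantities $d_p(c)=\min_k|c-kp|\geq 0$, as in the convention of the Remark after Theorem \ref{result1}. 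Hence $P_j\geq 0$, and $P_j=0$ exactly when the constraint for the index pair $(b_1,b_2)$ holds modulo $q$.

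For 1), suppose $\lambda_j>0$ for all $j$. If $f$ satisfies every constraint, then each $P_j$ is zero and $\mathcal{L}=l$. Conversely, if $\mathcal{L}=l$, then $\sum_j\lambda_jP_j=0$. Since this is a sum of nonnegative terms with strictly positive weights, each $P_j=0$, so every constraint holds. This is where strict positivity of the $\lambda_j$ is needed.

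For 2) and 3), I read $\mathcal{L}$ as the optimal value $\mathcal{L}^*=\min_{q\in M_p}\min_{f\in\mathbb{Z}[x]\cap A_N}\mathcal{L}_{L,\lambda,q}(f,\omega)$. The data are fixed, so the objective $(q,f)\mapsto\mathcal{L}_{L,\lambda,q}(f,\omega)$ is one fixed function. For 2), enlarging $M_p$ to $M'_p$ enlarges the feasible set to $M'_p\times A_N$; for 3), enlarging $A_N$ to $\tilde{A}_N$ enlarges it to $M_p\times\tilde A_N$. In either case the minimum over the larger set is at most the minimum over the smaller one. To guarantee that the minima exist I will use that $A_N$ (bounded integer coefficients, fixed degree) and $M_p$ are finite, or else replace $\min$ by $\inf$ throughout.

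For 4), the penalty index set $I_L=\{(b_1,j):b_1+|j|\leq L-1\}$ is increasing in $L$, so $L_1<L_2$ gives $I_{L_1}\subseteq I_{L_2}$. Every summand is nonnegative, so for each fixed $(q,f)$ we get $\mathcal{L}_{L_1,\lambda,q}(f)\leq\mathcal{L}_{L_2,\lambda,q}(f)$. Taking minima over the same feasible set preserves this inequality, which gives the statement both pointwise and for the optimal values.

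The main obstacle is interpretive rather than technical. I have to fix which reading of $\mathcal{L}$ (pointwise loss or optimal value) each item refers to. I also have to make sure the penalty term is genuinely nonnegative and vanishes only on constraint satisfaction, which rests on the $d_p$ norm convention. Once these are settled, each item is a one-line monotonicity or positivity argument.
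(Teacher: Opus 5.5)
Your proposal is correct and follows essentially the same route as the paper: nonnegativity of the weighted squared penalty terms gives 1) and 4), and monotonicity of a minimum under enlarging the feasible set (of ideals or of polynomials) gives 2) and 3). Your explicit use of the $d_p$ convention, of the strict positivity of the $\lambda_j$ in 1), and of the finiteness of $A_N$ and $M_p$ (so that the minima exist) makes precise what the paper's proof leaves implicit.
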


\begin{proof}
1) For any choice of the hyper parameters $\lambda_j>0$, $j \in \{1, \ldots, m\}$, and fixed data observations $y_1,\ldots, y_n$ we have that if the polynomials satisfy exactly all the constraints we obtain zero contribution to the constraint terms. This gives the argument in one direction. For the reverse direction, if the constraints contributions is null then since they are summing squares that is non-negative functions the only possibility is for each one of them to be zero. Thus, the polynomial satisfies all the constraints.

2) As we are interested in a minimization problem, taking a minimum over a larger selection of ideals ($M_p \subset M'_p $) gives the fact that the minimum of the corresponding loss function $L^{'}$ can be at most as large as the minimum for $M_p$ but it can be in general smaller than that if the new set of ideals contains a better fit for the truth polynomial that generated the data.
Thus, as the set $A_N$ on which we select the polynomials from is fixed, we have $\mathcal{L}'\leq \mathcal{L}$.

3)As we fix the set of ideals we consider the minimum over, a larger set of selection of polynomials $\tilde{A}_N$ can decrease the minimum if the data is fixed. Thus,  $\tilde{\mathcal{L}} \leq \mathcal{L}. $

4) Similarly, if the search space $A_N$ and the ideals set are are fixed then considering $L_1 \leq L_2$ introduces new $L_2-L_1$ terms in the loss function which bring new contributions on top of the already existing terms that are shared by both the loss functions $\mathcal{L}_{L_1}$
and $\mathcal{L}_{L_2}$.

\end{proof}

\begin{figure}[H]
        \centering
        \includegraphics[width=.40\linewidth, trim=1.5cm 1cm 1.5cm 1cm, clip]{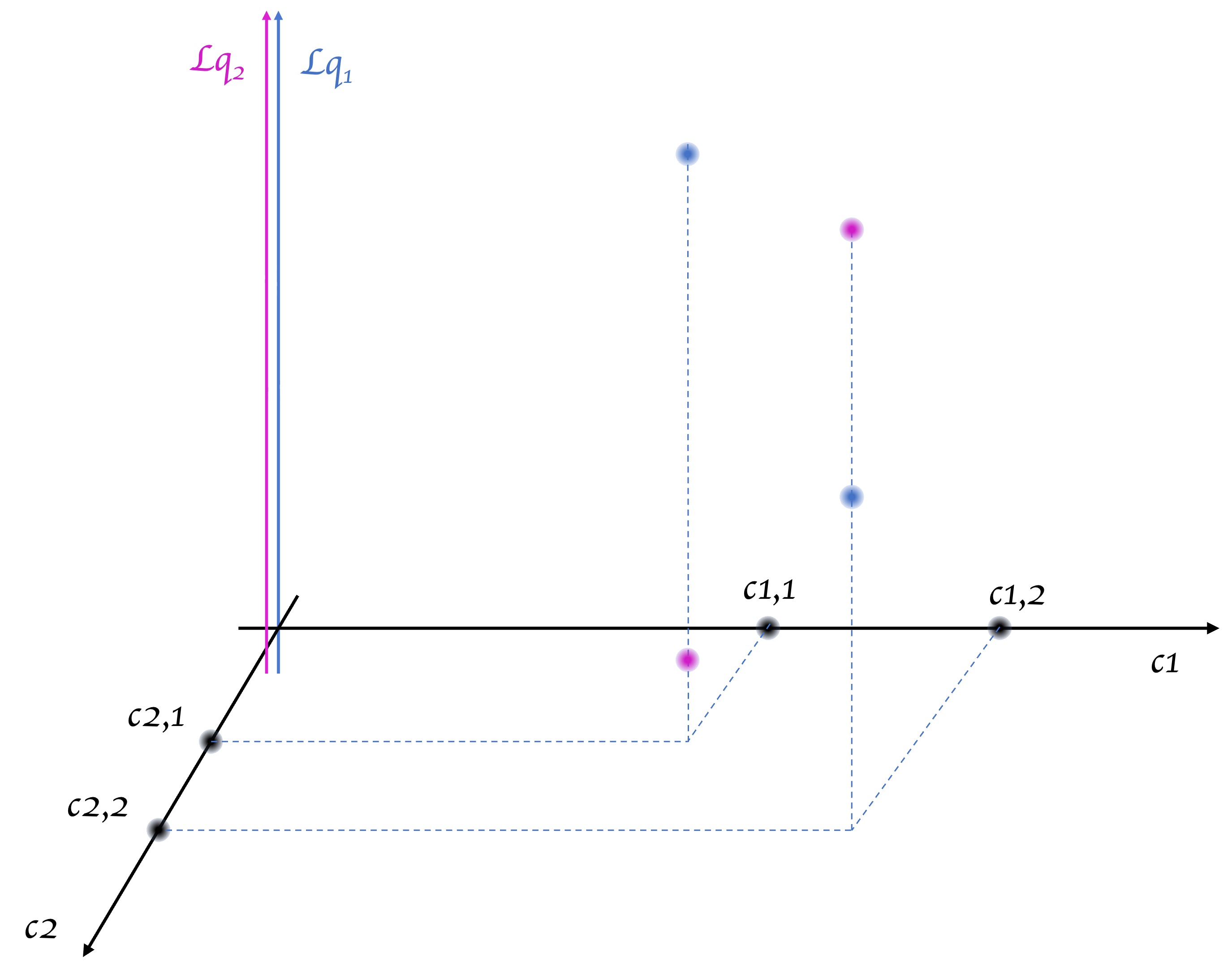}
        \caption{Schematic representation of the loss function landscape for two choices of  ideals $q$ and $q'$. The other indices in the definitions of the loss functions are discarded for convenience of the reader. 'Slices' of the loss functions are shown for two polynomials with coefficients $c_{1, i}$ and $c_{2, i}$, for $i \in \{1, 2\}.$ The z-axis for the two choices of the ideals are drawn on top of each other. }
\end{figure}

As mentioned in the one-variable case, one can considered versions of these loss functions for various prime numbers. In the following,for convenience of the reader, we return to the one-variable case for theoretical estimates and experiments. The analysis can be extended for multi-variables case.

\section{Probabilistic  techniques to study the minimizers in the one-variable case}

Our approach of recovery from noisy data allows one to obtain probabilistic estimates. We detail this in this section. In the one-variable case, we obtain probabilistic bounds for a certain polynomial to minimize the loss function. This is useful to know also how likely is for another polynomial to minimize the loss function which is not the true polynomial that was used to generate the noisy data. In addition, using union bounds one can obtain also information on the number of experiments needed in order for the probability that a minimizer is not the true polynomial to be certainly smaller than a certain threshold. For convenience of the reader we discuss the ideas in one variable  but one can extend these to multiple variables case.  The probabilistic techniques are explored in the context of derivations transformations but they hold for more general transformations and their corresponding axioms.

\subsection{Results for bounded noise} 
In this subsection, we explore some initial results on some simpler cases in which one considers data generated by a ground-truth polynomial which respects all the conditions in the penalty term in the loss function perturbed by bounded noise.
We study this bounded noise case via Hoeffding inequality and we obtain as a result  in this section the estimate in \eqref{boundHeoff}. This analysis can be extended further to more involved penalty terms and to multi-variables. Since this is the first work on this direction, we perform the analysis in this case to exemplify the method. We will drop the indices from the definition of the loss functions.

For a fixed competing polynomial, define $ d_j = f(x_j)-f_*(x_j),$ and $ D_f^2 = \frac{1}{n} \sum_{j=1}^n d_j^2. $
Let $\Delta_f = R(f)-R(f_*).$  Since $y_j=f_{*}(x_j)+\varepsilon_j,$ we have 

\begin{align} \mathcal{L}(f)-\mathcal{L}(f_*) = \frac1n \sum_{j=1}^n \left( \bigl(\varepsilon_j-d_j\bigr)^2 - \varepsilon_j^2 \right) + \Delta_f = D_f^2 + \Delta_f - \frac{2}{n} \sum_{j=1}^n \varepsilon_j d_j. 
\end{align}

Therefore, we have that $\mathcal{L}(f)\le \mathcal{L}(f_*) $ is equivalent to 
$\frac{2}{n} \sum_{j=1}^n \varepsilon_j d_j \ge D_f^2+\Delta_f.$ In our model, we assume independence, $\mathbb E[\varepsilon_j]=0$, and $|\varepsilon_j|\le b.$ Let us also define $ X_j = \varepsilon_j d_j.$
Then $\mathbb E[X_j]=0,$ and \(X_j\) lies in an interval of length $ 2b|d_j|.$
Given the definition of our loss functions, we have $ D_f^2+\Delta_f>0.$ Then, we obtain $$\mathbb P \left( \mathcal{L}(f)\le \mathcal{L}(f_*) \right) = \mathbb P \left( \sum_{j=1}^n X_j \ge \frac{n}{2} \bigl(D_f^2+\Delta_f\bigr) \right). $$ 
Hoeffding's inequality gives $$ \mathbb P \left( \sum_{j=1}^n X_j \ge \frac{n}{2} \bigl(D_f^2+\Delta_f\bigr) \right) \le \exp \left( - \frac{ 2 \left( \frac{n}{2} \bigl(D_f^2+\Delta_f\bigr) \right)^2 }{ \sum_{j=1}^n \left(2b|d_j|\right)^2 } \right).$$ 
Using that $\sum_{j=1}^n d_j^2 = nD_f^2,$ we obtain $ \sum_{j=1}^n \left(2b|d_j|\right)^2 = 4b^2nD_f^2.$
We thus have that 
\begin{equation}\label{boundHeoff}
\mathbb P \left( \mathcal{L}(f)\le \mathcal{L}(f_*) \right) \le \exp \left(- \frac{ n\bigl(D_f^2+\Delta_f\bigr)^2 }{ 8b^2D_f^2 } \right), \hspace{2mm} D_f^2+\Delta_f>0. 
\end{equation}

We note that, under the assumption $R(f_*)=0$ and $ R(f)\ge0,$ we have $ \Delta_f = R(f)\ge0.$

 \subsection{Analysis of the hyperparameter} 
 
In this section, we study a particular example in which the penalty term in the loss function is composed only of three terms. The analysis done in this particular case can be further extended to more involved cases.  Let us assume now that the true polynomial satisfies all three arithmetic constraints appearing in the penalty, that is, for the true polynomial we have that $q_1(f_*)=0$, $q_2(f_*)=0,$  $q_3(f_*)=0. $ This gives that $R_{\Lambda}(f_*)=0$. For a competing polynomial $f$ in the set $A_1$ under consideration, let us consider $q(f) = \bigl(q_1(f),q_2(f),q_3(f)\bigr),$ so that $ R_{\Lambda}(f)=\Lambda\cdot q(f),$ where, we consider for simplicity the model in which we have a unique hyperparameter $\lambda$ so $\Lambda=(\lambda, \lambda, \lambda). $
 As before $d_j = f(x_j)-f_*(x_j)$, $D_f^2 = \frac1n \sum_{j=1}^n d_j^2,$ and $Z_f = \frac{2}{n} \sum_{j=1}^n \varepsilon_jd_j. $
Then $$\mathcal{L}_{\Lambda}(f) - \mathcal{L}_{\Lambda}(f_*) = D_f^2+\Lambda\cdot q(f) - Z_f.$$ 
We thus have $\mathcal{L}_{\Lambda}(f_*) \le \mathcal{L}_{\Lambda}(f)$ if and only if $Z_f \le D_f^2 +\Lambda\cdot q(f). $
\noindent\\
 Using that $|\varepsilon_j|\le b,$  we have $ Z_f = \frac{2}{n} \sum_{j=1}^n \varepsilon_jd_j $ satisfies $Z_f \le \frac{2b}{n} \sum_{j=1}^n |d_j|. $ 
 \noindent
 
Therefore a competing polynomial $f \in A_1$ 
 can't have a lower loss function when evaluated on it than \(f_*\) whenever $ D_f^2+R_{\Lambda}(f) > \frac{2b}{n} \sum_{j=1}^n |f(x_j)-f_*(x_j)|. $ 
 
In our case, a condition for \(f\) to be impossible as a minimizer is $$ \lambda Q(f) > \frac{2b}{n} \sum_{j=1}^n |f(x_j)-f_*(x_j)| - D_f^2.$$ 
 
One can then consider a critical value associated with \(f\), whenever $Q(f)>0$, by $$\lambda_f^{\mathrm{critical}} = \frac{ \left( \displaystyle \frac{2b}{n} \sum_{j=1}^n |f(x_j)-f_*(x_j)| - D_f^2 \right)_+ }{ Q(f) }.$$
 
Increasing $\lambda$ eventually removes individual incorrect polynomials from the set of possible minimizers.

In addition, one can consider unbounded cases such as  Gaussian noise version of this problem and obtain a corresponding estimate in that case.

\begin{rmk}
In addition, these estimates interact with the structure of the axioms of the transformations involved. 
Indeed, 
$$\mathbb P \left( \mathcal{L}(f+r)\le \mathcal{L}(f_*) \right) \le \exp \left(- \frac{ n\bigl(D_{f+r}^2+\Delta_{f+r}\bigr)^2 }{ 8b^2D_{f+r}^2 } \right), \hspace{2mm} D_{f+r}^2+\Delta_{f+r}>0.$$ 
As the term $\Delta_{f+r}$ contains mixed applications of p-derivations and usual derivatives then the structure of the axioms of these transformations interact with the exponent in the estimate. As in the loss function discussion from before, the signs of the coefficients in the polynomial $r(\cdot)$ matter as this will lead to positive and negative signs that might produce cancellations so a careful analysis of these estimates is required. The additive example $f+r$ is just a particular choice but one can consider other versions such as $f+r^2,$ etc.

In addition, It is interesting to consider families of polynomials in a certain region and then ask what is the chance that there exists a polynomial inside this family for which the loss $\mathcal{L}(f)$ is less than the loss on the true polynomial. One can consider then a union bound estimate. One is interested in how big are the corresponding probabilities to know how many terms one can take in order for the probability to not surpass a given threshold. 

\end{rmk}

\subsubsection{General transformations in this context}

One can consider also a more general setting in which the transformations T are given by some rules (they might not be p-derivations or standard derivatives). Then, the estimates will depend on the structure of these transformations and their corresponding axioms. 
Indeed, one can consider models in which in the penalty terms appear various transformations (the list is non-exhaustive):
\[
\begin{aligned}
T(f+g)&=T(f)+T(g),\\
T(fg)&=T(f)g+fT(g),
&&\text{ordinary derivation},\\
T(f+g)&=T(f)+T(g)+C_p(f,g),
&&\text{$p$-derivation},\\
\end{aligned}
\]

The constraints of the truth polynomial might be represented also by other types of transformations, for example, a certain algebraic relation,
$$
T(f)=F(\sigma,f,\partial f),
$$
for some function or parameter $\sigma.$ 

One can phrase these models of recovery from noise also as partial information leaked to an attacker model. Indeed, one is interested to see if an attacker which has access to the noisy data and has access to some partial information  can recover the true signal. In the last example, the leaked information can be modeled as
$$
\|F(\sigma,f^*,\partial f^*)\|\le\gamma,
$$
for some real-valued $\gamma>0$.
In the next section, we perform experiments in this direction.
Also, one can  consider a model in which an attacker tries several transformations for the given data and ranks them according to the score in terms of the loss function of the minimizer obtained for each transformation model.

\subsection{Applications of this $ALS_d$: Data-driven Statistical Arithmetic Geometry, Enumerations and Incidence}

Taking the view in the preface of the book \cite{Arithmeticgeometrybook}, we see Arithmetic Geometry as solutions of systems of polynomials with coefficients in $\mathbb{Z}/p\mathbb{Z}=\mathbb{Z}_p$, $\mathbb{Z}$, or $\mathbb{Q}$. Then, if one is interested in a problem of the type given a system of $N$ polynomial equations with  $N-k$, $k <N$, of them and the other subjected to uncertainty due to errors in the transmission of the data, one encounters problems of Random Arithmetic Geometry. We refer the reader to \cite{Intersectiontheoryfultonbook}, \cite{Arithmeticgeometrybook} for more details on the deterministic formulation of these directions. See also \cite{Modernenumerativebook} for a deterministic Enumerative Geometry reference. We focus on the case of integer-coefficients polynomials and their reduction mod p, for some prime number p.

There are different approaches to consider random models: 

\subsection{Random polynomials models}

In one direction, one can consider models of random polynomials and study the corresponding probabilistic versions of the deterministic theories.
In Random Intersection Theory one would be interested in counting the number of solutions of such systems of random polynomial equations and estimates  on those. Random Incidence Theory (see \cite{Introincidence} for a deterministic approach) would be closer in spirit to this. We take the general view that one can think of Random Arithmetic Geometry and the corresponding Enumerative Geometry as solutions and counting solutions to systems of random polynomial equations with coefficients in $\mathbb{Z}_p$, for some prime number p, or $\mathbb{Q},$ or $\mathbb{Z}.$

$$
\begin{cases}f_1(x_1, x_2, \ldots, x_N;\omega)=0\\f_2(x_1, x_2, \ldots, x_N;\omega)=0\\ 
f_3(x_1, x_2, \ldots, x_N;\omega)=0
\end{cases}
$$

In the above example, for simplicity, we took only three polynomial equations described be the random polynomials $f_i(\cdot)$ (we can consider more general versions of these) with random coefficients in $\mathbb{Z}_p, $ for some prime number p, $\mathbb{Q},$ or $\mathbb{Z}.$
One natural model is composed of polynomials with coefficients from a uniform distribution but one can consider various models for the randomness of the coefficients.

\subsection{Data-driven approach}

In a different direction, in the context of the ALS one is interested in the data-driven versions of these in which one observes some noisy real data, and wants to recover structure from this setting. According to our search, this approach was not considered so far in the literature. In the example that is emerging from the ALS  we study, it is interesting to note that the corresponding random polynomial systems are specific but one can take the general view mentioned above for the fields of Random Arithmetic Geometry and  Enumerative Geometry. 
To simplify the general picture, let us look at the following system of polynomials in which the first two are deterministic and for the third one has access to noisy data only (in  terms of pairs $(x_i, y_i(\omega))$ about it.

$$
\begin{cases} 
f_1(x_1, x_2, \ldots, x_N)=0\\f_2(x_1, x_2, \ldots, x_N)=0\\ 
f^\sim_3(x_1, x_2, \ldots, x_N)
\end{cases}
$$
One can consider other more general models with more equations in which part are deterministic or random from given probabilistic model and the remaining ones are recovered from noisy data, etc.

For this system or other alike,  one can consider the reduction of the system mod p, for some prime p (and repeat for more primes) and if one knows that one or more of the polynomials of the systems belongs to some symbolic powers as in our analysis so far, then one knows information about their derivatives behavior etc that is useful when studying these systems. For example, one can take further derivatives and reduce mod p, etc. and study the system. 

The first part (Subsection 4.4) we leave unexplored in this work. We aim to explore it in future work.  We focus on the second part.

\subsubsection{Experiments in Claude on Recovery from Noisy data at unseen noise levels}

This development can be thought as a version of Arithmetic Machine Learning. We study experiments in which we train parameters on some portion of the data and we test them on the remaining portion of unseen values.

In our analysis, we consider the Jaccard distance on varieties. One can consider other more refined notions in future work.
\begin{mydef}
  For two finite varieties $V_1, V_2$, the \emph{Jaccard distance} is
  \[
    d_J(V_1, V_2) \;=\; 1 - \frac{|V_1 \cap V_2|}{|V_1 \cup V_2|},
  \]
  with the convention $d_J(\emptyset, \emptyset) = 0$.
\end{mydef}

For each noise realization $\omega$, the fitting procedure produces the polynomials
in $\mathbb{Z}[x]:$ $f^*(x),\; \hat{f}^\omega_{\mathrm{reg}}(x),\;
  \hat{f}^\omega_{\mathrm{unreg}}(x).$
In our context, we will be interested in the following varieties
$$
  V^* \;=\; \bigl\{x:\, f^*(x) \equiv 0 \hspace{1mm}\text{mod}(p)\bigr\}.
$$
$$
  V_{reg}^{\omega} \;=\; \bigl\{x:\, \hat{f}^\omega_{\mathrm{reg}}(x) \equiv 0 \hspace{1mm}\text{mod}(p)\bigr\}.
$$
$$
  V_{unreg}^{\omega}\;=\; \bigl\{x: \hat{f}^\omega_{\mathrm{unreg}}(x) \equiv 0  \hspace{1mm}\text{mod}(p)\bigr\}.
$$


In the next examples, the hyperparameter is learned as well from the data. The experiments in this section are performed with Alternating Search Method. The anchor $a$ in $(p, x-a)$ is manually selected. We note that the conditions in the penalty terms are local (in the sense that they are formulated at one point) but the recovery problem is formulated for the global variety. In this initial experiments, we aim to study if the rigidity of the objects can lead from local information to global recovery. It is interesting to study this phenomena further for more involved scenarios and many variables with more computational power. We take the view that noisy data points with prior/leaked information to the investigator (such as the underlying polynomial might be close to some symbolic power of certain ideal) are studied along with some other family of integer-coefficients polynomials. Then, considering mod p of those and partial recovery of the variety globally from the data, leads to studies of Data-driven Intersection and Enumeration Geometry mod p for the whole system (the previous polynomials and the recovered variety from the noisy data of the additional one).

\begin{figure}[H]
        \centering
        \includegraphics[width=.38\linewidth]{ 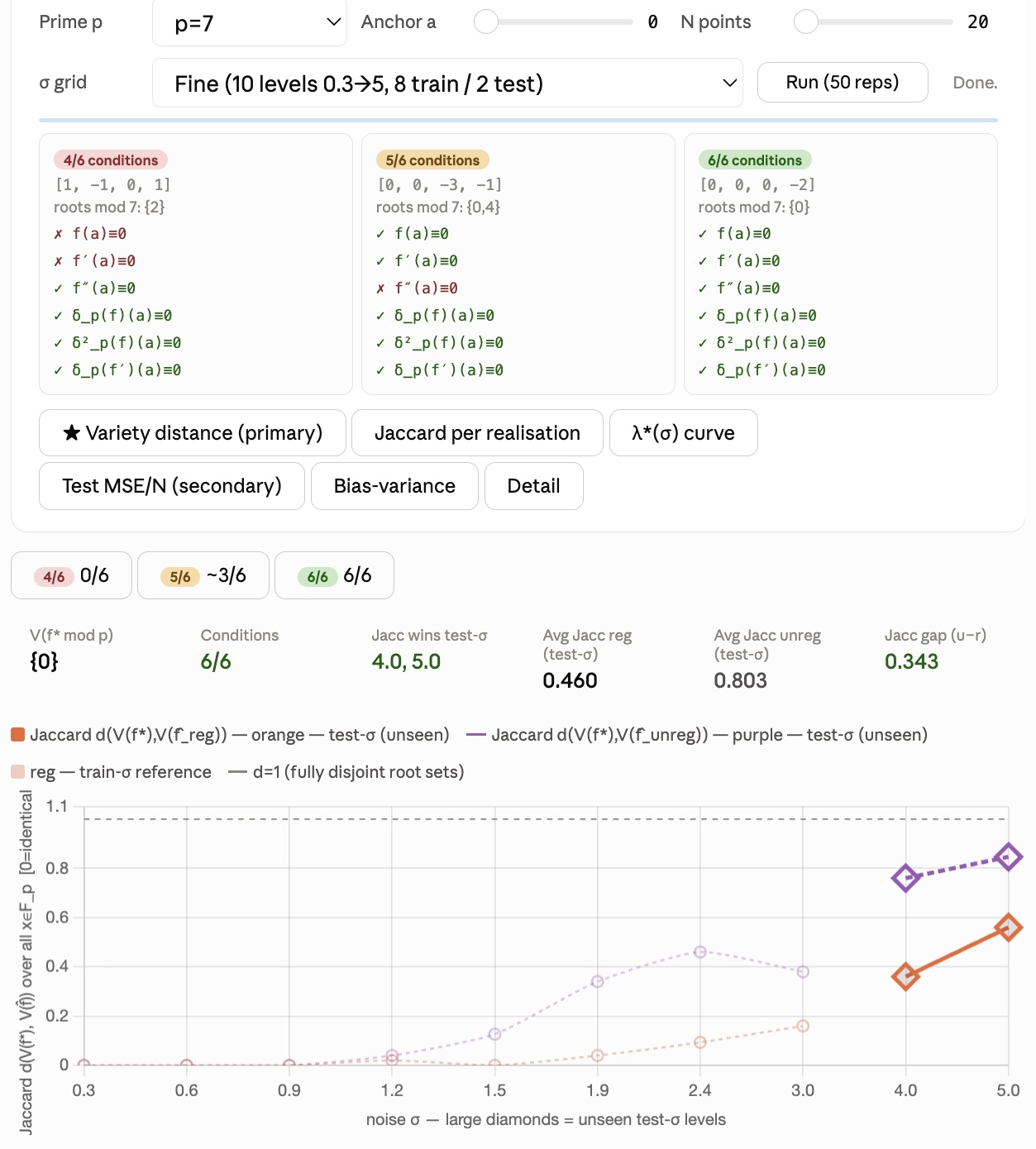}
        \caption{Jaccard distance comparison between recover data from noise and the true object at unseen noise levels. The data is generated from a polynomial satisfying all 6 conditions in the arithmetic penalty to which we add noise. The hyperparameter $\lambda$ is learned from parts of the data. We use a 80-20 \% data splitting. In the image the first values of the noise parameter, we use for training and the remaining 20\% is for testing.}
\end{figure}

\begin{figure}[H]
        \centering
        \includegraphics[width=.38\linewidth]{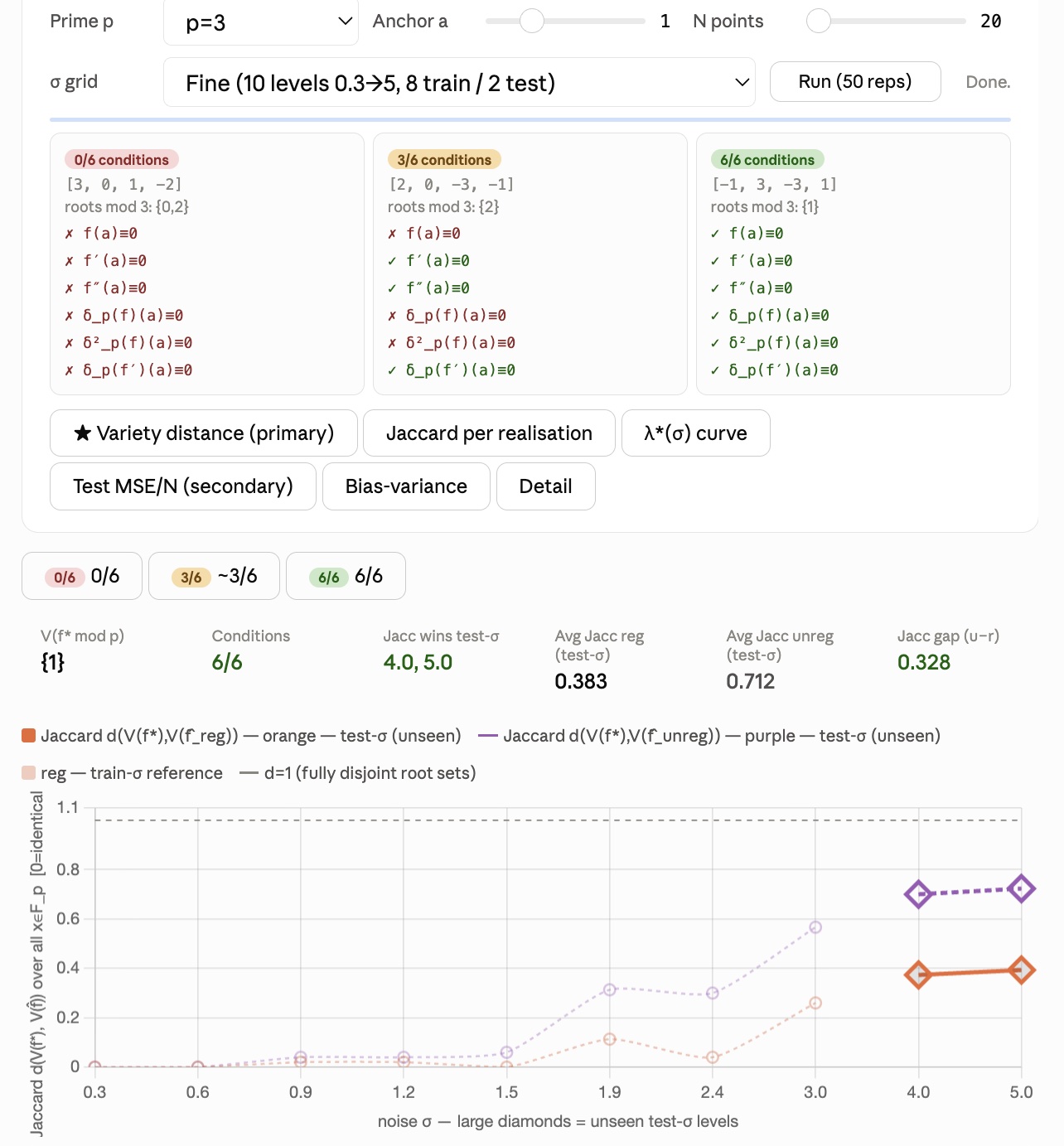}
        \caption{Jaccard distance comparison between recover data from noise and the true object at unseen noise levels. The data is generated from a polynomial satisfying all 6 conditions in the arithmetic penalty to which we add noise. The hyperparameter $\lambda$ is not manually set but is learned from parts of the data. We use a 80-20 \% data splitting. In the image the first values of the noise parameter, we use for training and the remaining 20\% is for testing.}
\end{figure}

\subsubsection{Comparison between two methods}

We also study the performance of Alternates Search (AS) and Simulated Annealing (SA) of recovery on unseen noise values.  
We consider models in which the underlying true polynomial respects none, few, or all the conditions in the regularizer term. In the experiments, we learn the parameter $\lambda$ on parts of the data, and we test with that parameter on unseen noise level two methods: Simulated Annealing and Alternating Search. We observe that $\lambda (6/6) > \lambda (3/6) > \lambda(0/6)$, thus the learned parameter penalizes more heavily when the data is generated by a true polynomial which satisfies more of the 6 conditions in the penalty term.

\begin{figure}[H]
        \centering
        \includegraphics[width=.65\linewidth]{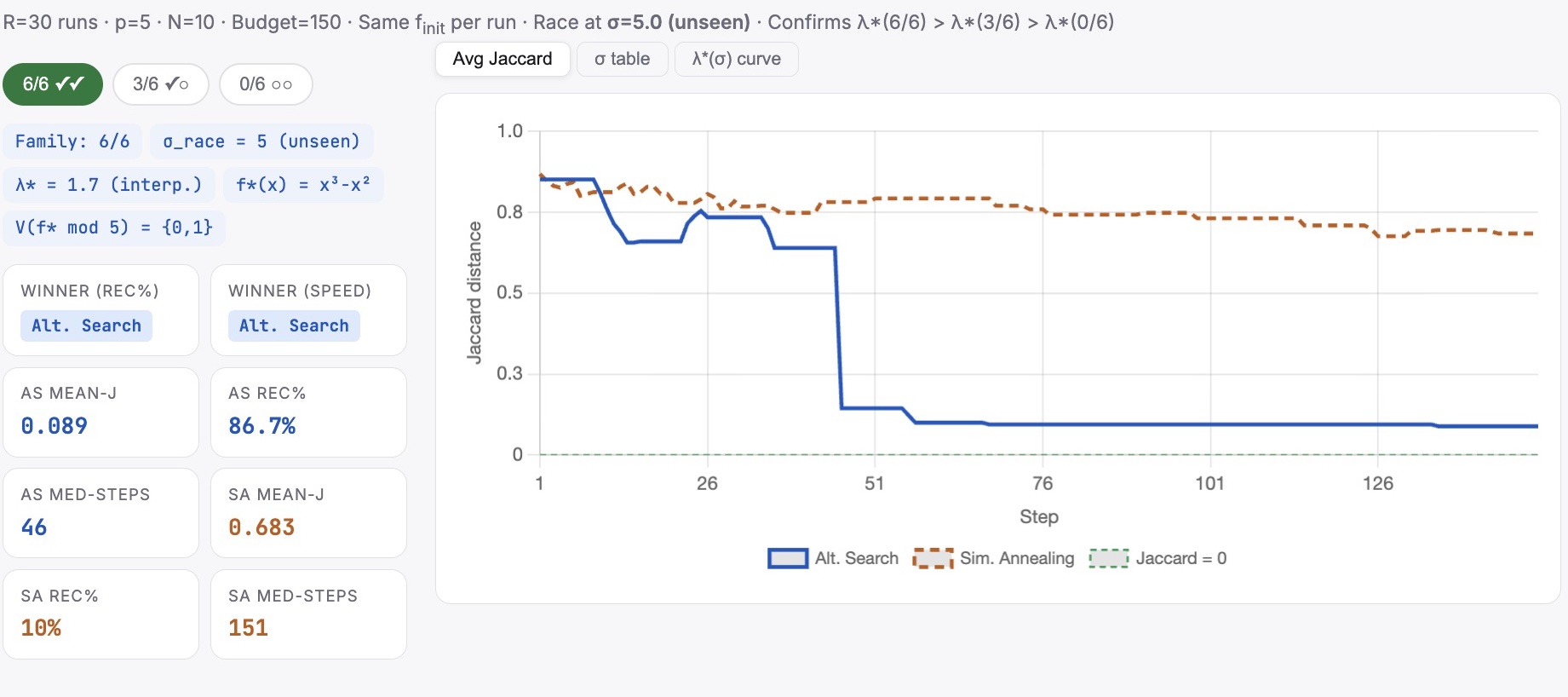}
        \caption{Jaccard distance comparison between two approaches Alternate Search and Simulated Annealing for unseen noise with learned parameter $\lambda$, when data is generated by a true polynomial that respects all 6 conditions. }
\end{figure}

\begin{figure}[H]
        \centering
        \includegraphics[width=.65\linewidth]{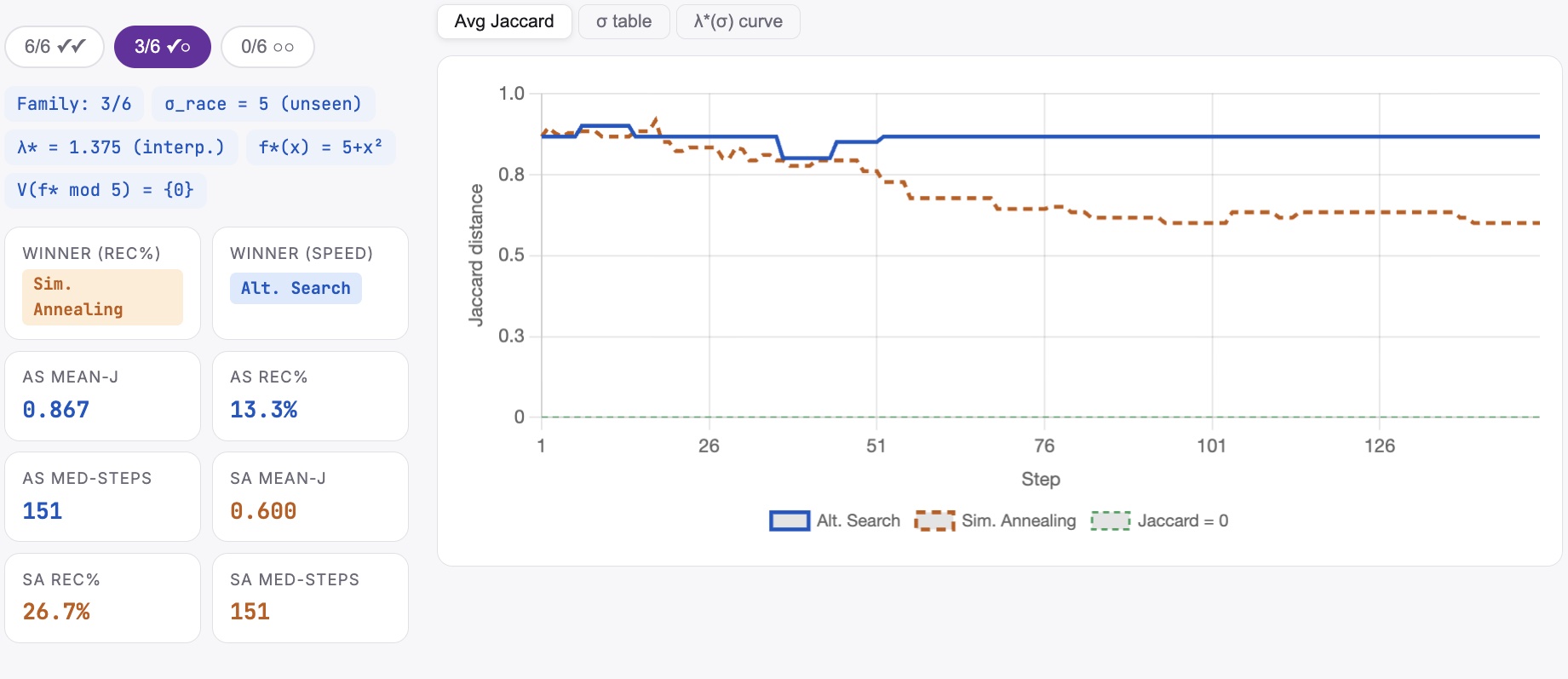}
        \caption{Jaccard distance comparison between two approaches Alternate Search and Simulated Annealing for unseen noise with learned parameter $\lambda$, when data is generated by a true polynomial that respects 3/6 conditions. }
\end{figure}

\begin{figure}[H]
        \centering
        \includegraphics[width=.65\linewidth]{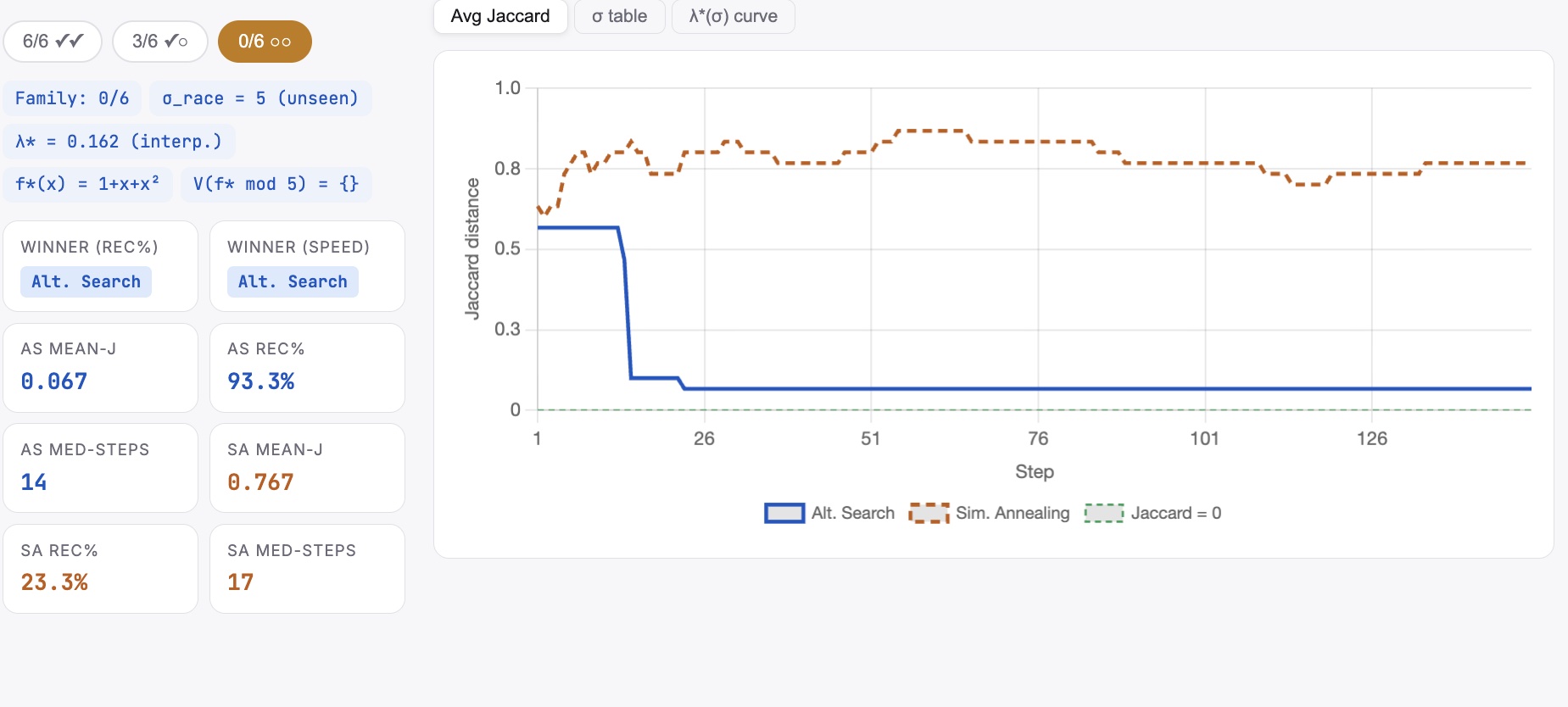}
        \caption{Jaccard distance comparison between two approaches Alternate Search and Simulated Annealing for unseen noise with learned parameter $\lambda$, when data is generated by a true polynomial that respects  0/6 conditions. }
\end{figure}

\subsubsection{Further potential interpretations in terms of Data Security}

With the development of Machine Learning models one is interested in problems related also with Data Security. We note that given one of the goals to do is recovery from noisy data, one can consider a problem similar in spirit with the setting of Learning with Errors in which one considers noisy polynomials with cryptographic applications. As interpolation and data-fitting of noisy data are natural questions for Machine Learning this model might lead to new questions related potentially with security of data. See \cite{regev2009lattices}, \cite{partialleak}. The setting we consider is similar in the sense that we have access to noisy data generated by a true underlying polynomial that is perturbed by noise. We study only the variety recovering. The experiments in the previous sections can be reinterpreted in the following language. We took the view that side information might have been leaked to an attacker. Then, this attacker might have partial information that the true polynomial whose perturbation generated the data might be close to being in symbolic power (having most of the conditions satisfied, .etc) or any other versions of these. Then, we put this information in the penalty terms of the loss functions, and study if such a model potentially used by an attacker could recover features of the true underlying polynomial in various contexts. In our previous experiments, we studied the variety recovery, not the coefficients recovery, which is a weaker recovery. However, if a potential attacker recovers from a local condition (constraints in one point) information about the  whole variety then it can limit the search space reducing the complexity of the problem of search.

One can study similar problems for more complicated scenarios in multiple variables including coefficient recovery in future works with more computational power available.

\subsubsection{Interpolation with and without noise}

In the area of Machine Learning, one is interested also in interpolation problems (see \cite{murphy2012machine}). In our context, then one is interested to see if integer-coefficients polynomials (along with their higher-order derivatives, higher order p-derivations, and their mixes) when evaluated at certain integer points $a \in \mathbb{Z}$ are a multiple of the prime number $p.$  These can be interpreted as interpolation conditions at points $x=a$, that is $f(a)=kp$ for some $k \geq 0$. Similarly for the higher order derivatives and high order derivations and their mixes. We note that there appear in the literature studies in which loss functions which contain both least-squares and an interpolation terms are considered. For some models, the interpolation terms require the value of the gradient (related with standard derivatives) in certain points or regions. See \cite{CSimagereconstructpaperrobust}, the work which starts at page 532 in \cite{booksurfacereconstructionpage5322015scale}, and the references therein. See also \cite{veryimportantsoftandhardlagrangepaper2023augmented}. In addition, \cite{murphy2012machine} considers both interpolation and noisy interpolation problems.

One might face the challenge that a polynomial of a certain degree can't pass through all of the observed noisy points (depicted in the schematic representation with red). Thus, one is interested to find the polynomial that fits the data in (in blue) and passes close to the red points or through one (as in the picture) or more of them if that is possible.

\begin{figure}[H]
        \centering
        \includegraphics[width=.40\linewidth]{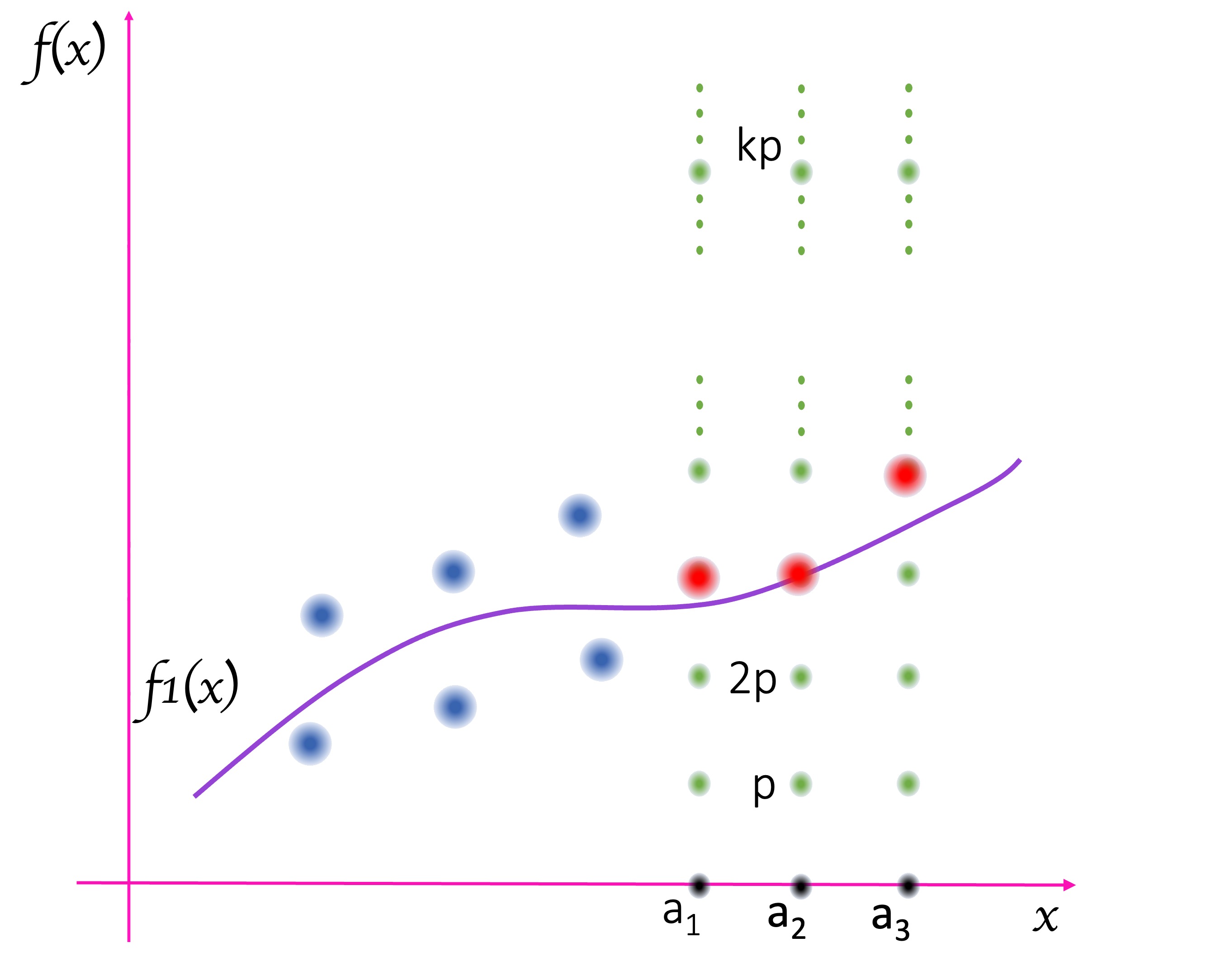}
        \caption{Schematic representation of noisy interpolation.}
\end{figure}

\section{An $ALS_d$ based on Hasse-Schmidt Derivations}

In addition to the previously studied in detail example of a $ALS_d$, we discuss in this section shortly another interesting example. The previous example used certain choice of losses in the total loss function. In this one, we consider a notion of distance for convenience. For this, let us first introduce the following distance on $\mathbb{Z}_p,$ for $p$ a prime number. 
\begin{mydef}
For $\hat{x}$ and $\hat{y}$ $\in$ $\mathbb{Z}_p$, let us consider

\begin{equation}
d(\hat{x},\hat{y})=\min(|\hat{x} - \hat{y}|, p - |\hat{x}- \hat{y}|).
\end{equation}
\end{mydef}



Let us consider an $ALS_d$ based on the Hasse-Schmidt derivations.  See \cite{hasse1937noch}. Let $k[x]$ be a polynomial ring over a field $k$. The $l$-th Hasse derivative of $x^n$ is

$$
D^{(l)} x^n=\binom{n}{l} x^{n-l},
$$

if $n \geq l$ and zero otherwise. Then, this is extended $k$-linearly to polynomials on $k[X].$ 

In characteristic zero, the Hasse-Schmidt derivation takes the form

$$
D^{(l)}=\frac{1}{l!}\frac{\mathrm{d}}{\mathrm{~d} x^l}.
$$

The Hasse-Schmidt derivation is a generalized derivation on $k[x]$ which satisfies an analogue of the product rule

\begin{itemize}
\item[] a) Axiom/Rule: For f, g $\in k[X]$, we have
$$
D^{(l)}(f g)=\sum_{i=0}^l D^{(i)}(f) D^{(l-i)}(g),
$$
\end{itemize}

and an analogue of the chain rule which can be thought as another axiom/rule when working with these transformations in the context of this $ALS_d.$ 
These objects are interesting also since there is a  Taylor's theorem involving them. In addition, their specific axioms/rules reflect the

In our example, we choose $k=\mathbb{Z}_p$, for some prime p. Let us consider $ \hat{f}(x) \in \mathbb{Z}_p[x]$.
We then consider the $ALS_d$ with the following sequence of loss functions

\begin{align*}
& \quad \mathcal{L}_{l, \lambda, \hat{u}}(\hat{f}, \omega)=\frac{1}{n}\sum_{i=1}^n (d(\hat{f}\left(\hat{x}_i\right),{\hat{y}_i}(\omega)))^2+\lambda (d(D^{(l)} \hat{f}(\hat{u}), \hat{0}))^2 \\
& \mathcal{L}_{l, \lambda}^*=\min_{\hat{u}\in \mathbb{Z}_p\cap A} \min_{ \hat{f} \in \mathbb{Z}_p[x]\cap A^*_1} \mathcal{L}_{l, \lambda, \hat{u}}(\hat{f}, \omega).
\end{align*}

In the above, $A$ can be chosen to be $A=\mathbb{Z}_p$ or a subset of it, and $A^*_1$ is defined similarly with $A_1$ for polynomials in $\mathbb{Z}_p[x].$

In addition, one can study the case corresponding to various multiplicities at different points. Let $j \in \{1, 2, \ldots, J\}.$

\begin{align*}
& \quad \mathcal{L}_{l, \lambda, \hat{u}_J}(\hat{f}, \omega)=\frac{1}{n}\sum_{i=1}^n (d(\hat{f}\left(\hat{x}_i \right),{\hat{y}_i}(\omega)))^2+\lambda \sum_{j=1}^J (d_T(D^{(l)} \hat{f}(\hat{u}_j), \hat{0}))^2 \\
& \mathcal{L}_{l, \lambda, \hat{u}_J}^*=\min_{\hat{u}_1\in \mathbb{Z}_p,\hat{u}_2\in \mathbb{Z}_p,\ldots, \hat{f} \in \mathbb{Z}_p[x] \cap A^*_1} \mathcal{L}_{l, \lambda, \hat{u}_J}(\hat{f}, \omega).
\end{align*}



\subsection{Other examples of $ALS_d$}

So far, we have studied datasets consisting of structured objects that were linked with Arithmetic. The natural objects along with their operations were the integer coefficients polynomials in several variables. 

One can consider other structured examples. In the example below the polynomials are remainders with respect to the division of the generator of the ideal generated by $y^2-x^3$. Given thus, these polynomials have a certain structure (not every format of the polynomial is allowed). Then, when one considered optimization problems over spaces of objects with this format the loss function is evaluated on these types of polynomials so to move between the objects one needs to keep the structure in mind. We also investigate the corresponding derivations on these spaces. We consider a data-set made of objects from the following quotient ring $R:=k[x,y]/(y^2-x^3).$
Derivations with respect to the field $k$, on this ring are given by
\[
\delta_1 \;=\; 2y\,\partial_x + 3x^2\,\partial_y,
\qquad
\delta_2 \;=\; x\,\partial_x+\frac{3}{2}y\,\partial_y.
\]

One can then in turn study $ALS_d$ with applications in \textit{Algebraic Machine Learning.}

\section{Transformations-Axiomatic Based Learning (TABLe) and Conjectures generation}

We now do an initial study of the reverse direction. For this, we consider in the following the idea of tables/libraries of transformations and their axioms. 
The developments in \cite{Davies2021Advancing} are in a different direction. The key distinction is that our approach is transformations-axiomatic-based. We systematically search over mathematically meaningful transformations and their rules, rank them by how well they characterize the sampled mathematical objects, and use the successful/failed transformations to suggest new structural characterizations or conjectures. Although we consider an initial experiment linked with ideals, this method can be replicated on Mathematical objects data-sets related with various other areas of research.

Our approach is not meant to replace human researchers but to leverage the capabilities of Machine Learning to generate conjectures and useful characterization of spaces of objects. In addition, we use the scores to rank the transformations and their axioms in terms of close they are to characterize the objects under consideration. Then, one can do a more detailed approach on the region which best characterizes the data by modifying the format of the axioms and check in that region if we get better predictions by increasing the scores. In some sense, this is close  to the thinking of humans when trying various transformations/methods over problems of research/data under consideration. The proposed approach does not lead directly to the potential solution but also contains a detailed summary of the scores various approach received, such that one can modify the axioms accordingly in the regions with high scores and to a more detailed search there.

We perform a controlled experiment  in the context of transformations that characterize the symbolic powers for ideals containing the prime number p. For this we generate a subset of the elements consisting of polynomials up to degree 3 and with bounded integer coefficients between [-6, 6]. This leads to the results below for a class of transformations considered. We emphasize that in contrast with another approach developed in the literature \cite{brunton2016discovering} we do not consider dynamical picture as our approach is transformation-transformation-axiomatic based and is centered around axioms and rules of transformations that apply directly to the objects. In addition, one can  study other datasets and their corresponding structures using  analogies with the methods developed in this paper. The same ideas that are developed here can be studied for other structures by analogy. 

Following the previous discussion, we know introduce these ideas more formally. 
Suppose we generate by brute force from the algebraic definition, a dataset

$$
D=\{f_1,\ldots,f_N\}
$$

of exact mathematical objects. For example, $f_i\in Q^{(n)}$, for a symbolic power $Q^{(n)}$. 

Then, consider a table/library of candidate transformations 

$$
\mathcal T=\{T_1,\ldots,T_M\},
$$

which might contain ordinary derivatives, $p$-derivations, compositions, difference operators, evaluation maps, etc. The transformations could satisfy different rules (the list is non-exhaustive):
\[
\begin{aligned}
T(f+g)&=T(f)+T(g),\\
T(fg)&=T(f)g+fT(g),
&&\text{ordinary derivation},\\
T(f+g)&=T(f)+T(g)+C_p(f,g),
&&\text{$p$-derivation},\\
\end{aligned}
\]
where $C_p(\cdot, \cdot)$ is the correction factor appearing in the sum rule for the $p$-derivations. Moreover, one can consider other versions in which the correction terms are indexed by some function or scalar parameter $\alpha C_p(\cdot, \cdot)$ which one would like to deduce from the scores by fine-tuning in the areas with high scores until one finds a suitable choice. Similar ideas can be studied for general transformations and their axioms.
The symbolic-power example is a good controlled example.  One can start with the algebraic definition of $Q^{(n)}$ and assume it does not know the differential characterization theorem. Using the algebraic characterization first, one can generate by brute force search some elements of the symbolic power of an ideal. Then one hopes for another characterization of these elements through some transformations. The Transformation-Axiomatic based approach then suggests searching over family of transformations (and assign scores to them in terms of how well they obtain the known characterization):

$$
\mathcal T=\{\partial_x,\delta_p,
\delta_p\partial_x,
\partial_x^2,
\delta_p^2,\ldots\}
$$

and to progressively discover which particular collection of transformation conditions separates membership from non-membership. The same can be replicated for multi-variables case including partial derivatives, etc. We consider the one-variable case for simplicity.  Following the characterization of the transformations for ideal membership obtained in \cite{Jeffriescite} (see also Theorem 5 in \cite{AMSdifbyprime}) allows us we consider the following controlled experiment. 

Similar ideas can be explored for various objects in search for characterization of their underlying structure  in other areas. 

\begin{figure}[H]
        \centering
        \includegraphics[width=.55\linewidth, ]{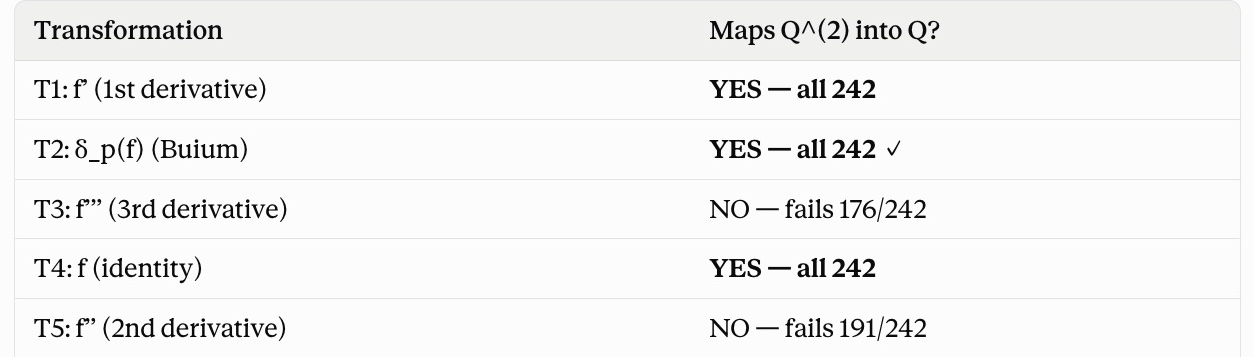}
        \caption{Experiments involving a subset of elements of the symbolic powers of order 2, for the prime $p=5$, for the ideal $(5, x-3)$. We considered a family of transformations on these elements and we checked their scores also on the right. This experiment serves as an example for general data-sets and for general transformations with their corresponding formulas and axioms to be studied. The ratio obtained can be interpreted as scores. Similarly with mathematical thinking, we learn through those scores how close were the other approaches given by the other transformations to characterize the set.} 
\end{figure}

\section{Gibbs measures and their interactions with the Axioms}

We start with the Gaussian Free Field as a very important model of Statistical Physics.

\subsection{The Gaussian Free Field model}
We consider a configuration $\sigma \in \Omega$, in which a variable
$\sigma_i \in \mathbb{R}$ is associated to each vertex $i \in \mathbb{Z}^d$; as usual,
we will refer to $\sigma_i$ as the spin at $i$. We define the interactions between
the spins located inside a region $\Lambda \subset \mathbb{Z}^d$, and between these
spins and those located outside $\Lambda$. 
The model has an energy that is based on a Hamiltonian which has an interaction term which favors agreement of
    neighboring spins. 
The local agreement between neighboring spins, that is, having
    $|\sigma_j - \sigma_i|$ small whenever $i$ is a neighbor vertex of $j$, does not prevent the spins
    from taking very large values. In order to avoid this, one introduces a
    parameter that penalizes large values of the spins.

\begin{mydef}[see \cite{DGFF2}]
The Hamiltonian of the Discrete Gaussian Free Field (GFF) model in $\Lambda \subset \mathbb{Z}^d$ is defined by
\begin{equation}
    H_{\Lambda, \beta,m}(\sigma)
=
    \frac{\beta}{4d}
    \sum_{\{i,j\} }
    (\sigma_i - \sigma_j)^2
    +
    \frac{m^2}{2}
    \sum_{i \in \Lambda} \sigma_i^2,
    \qquad \sigma \in \Omega,
\end{equation}
where $\beta \geq 0$ is the inverse temperature and $m \geq 0$ is the mass.
The model is called massive when $m > 0$, and massless when $m = 0$.
\end{mydef}

We can view the interaction term in the Hamiltonian comes from considering the transformation given by the difference operation.

\subsection{Energies and Gibbs measures linked with Arithmetics}

By analogy, we consider other versions of these by considering other transformations which are derivations or derivations-like in order to keep the theme. 
As in the previous sections, the transformations can be indeed more general. 
We can view this as a model of both APS (see below) and ALS as this is an element that one can view as a probabilistic model for which one  has proved a lot of results in the Probability community (see \cite{DGFF2}, \cite{DGFF1})  or can view this formulation as something close in spirit to the Energy-based models.
Energy-Based models (EBMs), which are probabilistic structures studied for many years that model distributions
by associating unnormalized probability densities. Such methods have been developed in many works in the past approximately 45 years (Hopfield, 1982 \cite{EBMhistory1}; Ackley et al., 1985 \cite{EBMhistory2}; Cipra, 1987 \cite{EBMhistory3}; Dayan et al., 1995 \cite{EBMhistory4}; Zhu et al., 1998 \cite{EBMhistory4};
Hinton, 2012\cite{EBMhistory5}). This perspective is unified  in the Machine Learning study under the name of Energy-based Models (EBMs) in \cite{EBMhistory6}. For more on this topic, see also \cite{bishop2023deep}. Gibbs measures are fundamental structures of interest both in Learning Theory and in Probability Theory.

We consider  in the following by analogy a model similar to this in which the difference is replaced with p-derivations or with mixes of usual derivations and p-derivations. This appears naturally when one searches for polynomials which belong to symbolic powers of certain ideals.  
One can think that the previously described Discrete GFF corresponds to a Dirichlet energy based on the difference operation. In the next subsection, we introduce an energy in which these operations are replaced with mixes of usual derivations and p-derivations. 
As mentioned, there are various results proved for the Discrete Gaussian Free Field  as this was well-studied in Probability Theory (see, for example, \cite{DGFF1}, \cite{DGFF2}) that will be interesting to study in the context of the new random field based on usual derivatives and p-derivations.

This energy can be used when one is interested in sampling polynomials with certain arithmetic properties. Indeed, following (\cite{bouman2022foundations}) one is interested in generating objects sampled from the Gibbs distribution. 
For sampling, we either restrict to a finite class of polynomials (for example, bounded degree and bounded coefficients), or add a coefficient penalty so that the corresponding partition function is finite. We consider the former case in the following.

\subsubsection{Arithmetic Energies}

Let us consider a prime number $p, $ and a loss function $\ell(\cdot)$. For a polynomial $f \in \mathbb{Z}[x]$ and ideals of the form $q_i=(p, x-a_i)$, we define the following Energy that is linked with the interpolation arithmetic information in the polynomials, that we can just call for simplicity 
'Arithmetic Energy' of level $L$ on $n$ ideals.

\begin{mydef}[Arithmetic Energy: one variable case]
    
    \begin{equation}
  \label{eq:energy}
  E^{L}_{q_1,\ldots, q_n}(f) \;=\; \sum_{i=1}^{n}\sum_{b_1+b_2 \leq L-1} 
    \ell \left(\delta_{st, p}^{b_1}\left(\frac{\partial^{b_2}f}{\partial x^{b_2}}\right) \text{mod} q_i\right)
\end{equation}

\end{mydef}
When the remainder is composed of constants, one choice is $l(\cdot)=(\cdot)^2$, for general formats of the remainders one can consider $\ell(\cdot)=||\cdot||^2$ when the remainder polynomial is not a constant. 

This arithmetic energy measures the discrepancy between  polynomial which respects all the mixed-derivatives conditions  (which will have zero energy) and the ones which fail a partial subset or the full set of those conditions (the energy increases with the number of conditions failed). 

In addition, one can consider variations of this by considering different exponents for various orders of contact to penalize differently higher order versus lower order contact but the idea is the same. We focus on this uniform case for convenience of the reader.

\begin{equation}
  \label{eq:energy}
  E^{L}_{q_1,\ldots, q_n}(f) \;=\; \sum_{i=1}^{n}\sum_{b_1+b_2 \leq L-1} 
    \left(\delta_{st, p}^{b_1}\left(\frac{\partial^{b_2}f}{\partial x^{b_2}}\right) \text{mod} q_i\right)^2
\end{equation}

This energy is zero if and only if $f$ along with its mixed-derivations belong to the ideals $q_i =
(p, x - a_i)$.  We note that as before, one can also consider more general prime ideals.
We note that compared with the loss function approach when we aimed at fitting some fixed data points and we were considering a search over the possible ideals, in the case of the energy-based method the perspective changes in the sense that the list of ideals is fixed and we search for polynomials which satisfy the corresponding conditions. This perspective helps us identify the polynomials which satisfy all or a part of the conditions and also to see how do neighboring polynomials behave in terms of this type of energies. Indeed, we aim to study this energy perspective as it not just provide us with the rigid class of polynomials that satisfy the conditions but also gives us information on neighboring polynomials of a certain polynomial and what happens in terms of the energy values (measuring how far is a certain object from satisfying the conditions) when one for example considers random perturbations of  polynomials. As usual, this is a particular choice, but one can consider other choices natural for the corresponding studies.

One can also define a version of this energy for multi-variables polynomials $g(x_1,\ldots, x_N) \in \mathbb{Z}[x_1, \ldots, x_N]$.

\begin{mydef}[Arithmetic Energy: Multi-variables case]

\begin{equation}
E^{L,N}_{q_1,\ldots, q_n}(g)=\sum_{i=1}^n\sum_{b_1+j_1+\ldots+j_N\leq L-1}\ell\left(\delta^{b_1}_{s t, p}\left(\frac{\partial^{j_1+...j_N}}{\partial x_1^{j_1} \ldots \partial x_N^{j_N}} g\left(x_1, \ldots, x_N\right)\right) \text{mod} \hspace{1mm} q_i\right)
\end{equation}
\end{mydef}

With the choice $\ell(\cdot)=||\cdot||^2$, this becomes 
\begin{equation}
E^{L,N}_{q_1,\ldots, q_n}(g)=\sum_{i=1}^n\sum_{b_1+j_1+\ldots+j_N\leq L-1}||\delta^{b_1}_{s t, p}\left(\frac{\partial^{j_1+...j_N}}{\partial x_1^{j_1} \ldots \partial x_N^{j_N}} g\left(x_1, \ldots, x_N\right)\right) \text{mod} \hspace{1mm} q_i||^2
\end{equation}

In addition, as before one can consider variations of this by considering different exponents for various orders of contact to penalize differently higher order versus lower order contact.

These energies also appear naturally when is interested in sampling. The question of sampling from Gibbs distributions is of interest in Machine Learning (see \cite{murphy2012machine}). Sampling from a probability distribution with this form makes the configurations of coefficients of integer coefficients polynomials which satisfy part of all of the conditions more likely. We note that there is no control over the coefficients size of the polynomials in this format of the energy but one can add another squared norm penalty term if one is interested also to control the size of the coefficients.

\begin{rmk}
As before, one can consider variations of this conditions involving derivatives and p-derivations such as the outputs of those to be on certain sub varieties or to have a certain format and study those variations of the energy. Indeed this version of the energy is designed to check the closeness of a polynomial to be in a certain symbolic power but one might  be interested in other patterns such as the derivatives or p-derivations having a certain prescribed format (say some to look like $Lx+b$ for some $L$ and $b$ after reducing mod p or other general formulas). 
\end{rmk}

We study some properties of the energy functions in the following.
\begin{Proposition}
The energy $E^L_{q_1, \ldots, q_n}$ has the following properties
\begin{itemize}
    \item[] 1)  $E^L_{q_1, \ldots, q_n}(f)=0$ if and only if f satisfies all the constraints. 

    \item[] 2) For fixed $L\geq 1$ and for a fixed selection of ideals $q_1, \ldots, q_n$, if $\mathcal{R}\subset \mathcal{R}'$ then $$\min_{f  \in \mathcal{R'}}E^L_{q_1, \ldots, q_n}(f)\leq \min_{f  \in \mathcal{R}}E^L_{q_1, \ldots, q_n}(f).$$

    \item[] 3) For fixed $\mathcal{R}$, and a fixed selection of ideals $\{q_1, \ldots q_l\}$ for any $f \in \mathcal{R}$, and for $L_1 \leq L_2$, we have $$E^{L_1}_{q_1, \ldots, q_l}(f)\leq E^{L_2}_{q_1, \ldots, q_l}(f).$$

    \item[] 4)  For fixed $L \geq 1$ and for fixed $\mathcal{R}$, if $Q=\{q_1, \ldots q_l\} \subset \tilde{Q}= \{q_1,\ldots, q_l, q_{l+1},\ldots, q_n\}, $ then for any $f \in \mathcal{R}$, we have $$E^L_{q_1, \ldots, q_l}(f)\leq E^L_{q_1, \ldots, q_n}(f).$$

\end{itemize}
\end{Proposition}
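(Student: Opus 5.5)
The argument only uses the structure of $E^L_{q_1,\ldots,q_n}$ as a finite double sum of terms $\ell(\cdot)$ with $\ell(\cdot)=(\cdot)^2$ (or $\|\cdot\|^2$). Each term is nonnegative, and it vanishes exactly when the corresponding reduced quantity vanishes. Here the reduction uses the convention $|h \bmod q|=d_p(h(a))$, and $d_p(n)=0$ iff $p\mid n$. All four items should then follow from nonnegativity and monotonicity of sums and minima, in the same way as Proposition \ref{proploss} for the loss functions. No axiom of the $p$-derivations is needed. I treat each item in turn.

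For item 1), I argue both directions. If $f$ satisfies all constraints, then for every $i\le n$ and every $b_1+b_2\le L-1$ we have $\delta^{b_1}_{st,p}(\partial^{b_2}f)\bmod q_i=0$. Every summand is therefore zero, and so is the sum. Conversely, suppose $E^L(f)=0$. A finite sum of nonnegative reals is zero only if each summand is zero. So each $d_p\big(\delta^{b_1}_{st,p}(\partial^{b_2}f)(a_i)\big)^2=0$, meaning $p$ divides each evaluated quantity, which is exactly the membership condition in $q_i=(p,x-a_i)$. The only point needing care is that ``satisfies the constraint'' must be read as this vanishing of the reduction. I would state this reading explicitly, including the norm version for non-constant remainders, where $\|h\|^2=0$ iff all coefficients have $d_p=0$.

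For item 2), I use that the energy does not depend on the search set. Every $f\in\mathcal R$ also lies in $\mathcal R'$, so $\min_{\mathcal R'}E\le E(f)$ for all $f\in\mathcal R$. Taking the minimum over $\mathcal R$ gives the claim. The minima exist because we restrict to finite classes (bounded degree and coefficients), as the paper stipulates; otherwise I would use infima. Item 2) is then immediate.

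For items 3) and 4), I compare index sets. For $L_1\le L_2$ we have $\{(b_1,b_2):b_1+b_2\le L_1-1\}\subseteq\{(b_1,b_2):b_1+b_2\le L_2-1\}$. Likewise $\{1,\ldots,l\}\subseteq\{1,\ldots,n\}$ in the ideal index. In both cases $E^{L_2}(f)-E^{L_1}(f)$, respectively $E^L_{q_1,\ldots,q_n}(f)-E^L_{q_1,\ldots,q_l}(f)$, is the sum of the extra summands. That difference is a finite sum of squares, hence $\ge0$. There is no real obstacle in this proposition; the hardest step is only the bookkeeping in item 1).
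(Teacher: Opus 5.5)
Your proposal is correct and is essentially the paper's argument. The paper also reduces items 1)--3) to the reasoning of Proposition \ref{proploss}: a finite sum of nonnegative squares vanishes iff each term does, a minimum over a larger set can only decrease, and enlarging the index set adds nonnegative terms. It proves 4) by noting that extra ideals only add nonnegative terms. Your version is simply more explicit about the $d_p$ convention that identifies vanishing of each term with membership in $(p,x-a_i)$, and about why the minima exist over finite classes.
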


\begin{proof}
The proof of 1) and 2), and 3) follows similarly as in the proof of Proposition \ref{proploss}.  

For 4), as $\mathcal{R}$ is fixed, we have that for a set containing more ideals, we might add more positive terms to the energy if the conditions are not satisfied, thus, $E_{Q} \leq E_{\tilde{Q}}. $
\end{proof}
In addition, we explore the energy landscape. 

\begin{Proposition}
Using the computations performed in Theorem \ref{result1}, and summing over the contributions for each ideal $q_k$, $k \in \{1, \ldots, n\}$ we obtain that
\begin{align}
&E_{q_1, \ldots, q_n}^L(f_0+r)
\nonumber\\
&=\sum_{k=1}^n \sum_{j_1+\ldots+j_N\leq L-2}||\left(\delta_{s t, p}\frac{\partial^{j_1+...j_N}}{\partial x_1^{j_1} \ldots \partial x_N^{j_N}} f_0\left(x_1, \ldots, x_N\right)+ \delta_{s t, p}\frac{\partial^{j_1+...j_N}}{\partial x_1^{j_1} \ldots \partial x_N^{j_N}}r(x_1, \ldots, x_N)\right)\text{mod} \hspace{1mm} q_k\nonumber\\
&-\sum_{i=1}^{p-1} \frac{\binom{p}{i}}{p} (\frac{\partial^{j_1+...j_N}}{\partial x_1^{j_1} \ldots \partial x_N^{j_N}}f_0(x_1, \ldots, x_N))^i (\frac{\partial^{j_1+...j_N}}{\partial x_1^{j_1} \ldots \partial x_N^{j_N}}r(x_1, \ldots, x_N))^{p-i} \text{mod} \hspace{1mm} q_k||^2+\mathcal{E}(b_1 \neq 1),
\end{align}

\begin{align}
&E^L_{q_1,\ldots, q_n}(f_0+r^\tau)
\nonumber\\
&=\sum_{k=1}^n\sum_{j_1+\ldots+j_N\leq L-2}||\left(\delta_{s t, p}\frac{\partial^{j_1+...j_N}}{\partial x_1^{j_1} \ldots \partial x_N^{j_N}} f_0\left(x_1, \ldots, x_N\right)+ \delta_{s t, p}\frac{\partial^{j_1+...j_N}}{\partial x_1^{j_1} \ldots \partial x_N^{j_N}}r^\tau(x_1, \ldots, x_N)\right)\text{mod} \hspace{1mm} q_k\nonumber\\
&-\sum_{i=1}^{p-1} \frac{\binom{p}{i}}{p} (\frac{\partial^{j_1+...j_N}}{\partial x_1^{j_1} \ldots \partial x_N^{j_N}}f_0(x_1, \ldots, x_N))^i (\frac{\partial^{j_1+...j_N}}{\partial x_1^{j_1} \ldots \partial x_N^{j_N}}r^\tau(x_1, \ldots, x_N))^{p-i} \text{mod} \hspace{1mm} q_k||^2+\mathcal{E}(b_1 \neq 1).
\end{align}

\end{Proposition}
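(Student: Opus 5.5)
The plan is to reduce the statement to the computation already carried out in the proof of Theorem \ref{result1}, exploiting the fact that the arithmetic energy is the penalty part of the loss $\mathcal{L}_{L,\lambda,q}$ summed over several ideals. Concretely, for each fixed ideal $q_k$ the energy contribution
\[
E^{L}_{q_k}(g)=\sum_{b_1+j_1+\ldots+j_N\leq L-1}\Big\|\delta^{b_1}_{st,p}\Big(\frac{\partial^{j_1+\ldots+j_N}}{\partial x_1^{j_1}\cdots\partial x_N^{j_N}}g\Big)\bmod q_k\Big\|^2
\]
coincides with $\lambda^{-1}$ times the penalty part of $\mathcal{L}_{L,\lambda,q_k}(g,\omega)$. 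Equivalently, it is the penalty part with $\lambda=1$ and the least-squares term deleted. The energy is then
\[
E^L_{q_1,\ldots,q_n}=\sum_{k=1}^n E^L_{q_k}.
\]
This holds because the sum over ideals is outside the sum over multi-indices in the definition, and the two sums can be interchanged.

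\textbf{Step 1.} Fix $k$ and substitute $g=f_0+r$. Use linearity of $\partial$ to write each mixed derivative as $\partial^{j}f_0+\partial^{j}r$.

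\textbf{Step 2.} For the terms with $b_1=1$, apply the Sum Rule/Axiom 3 of the $p$-derivation with $x=\partial^j f_0$ and $y=\partial^j r$. This produces the term $\delta_{st,p}\partial^j f_0+\delta_{st,p}\partial^j r$ together with the correction $-\sum_{i=1}^{p-1}\frac{\binom{p}{i}}{p}(\partial^jf_0)^i(\partial^jr)^{p-i}$. Here the index constraint $b_1+|j|\le L-1$ with $b_1=1$ gives $|j|\le L-2$.

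\textbf{Step 3.} Collect all terms with $b_1=0$ or $b_1\ge 2$ into $\mathcal{E}(b_1\neq 1)$, exactly as in Theorem \ref{result1}. Reduction mod $q_k$ is applied after the algebraic manipulation. This is legitimate because the identity holds in $\mathbb{Z}[x_1,\ldots,x_N]$ before reduction, and reduction (evaluation and taking $d_p$ of the coefficients) is applied to both sides of an equality of polynomials.

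\textbf{Step 4.} Sum over $k=1,\ldots,n$. The error terms combine into a single $\mathcal{E}(b_1\neq 1)$ summed over ideals.

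\textbf{Step 5.} For the second identity, repeat Steps 1 to 4 with $r$ replaced by $r^\tau$. At this level of expansion only the sum rule is needed. If one wishes to expand $\delta_{st,p}\partial^j(r^\tau)$ further, the multiplication axiom (Axiom 2), the Power Rule Analogue and the chain rule for $\partial$ give the refinement, as in the Remark following Theorem \ref{result1}.

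The only real subtlety, and the step I expect to require care, is justifying that the norm $\|\cdot \bmod q_k\|$ of the reduced polynomial is computed consistently from the expanded form. Because $d_p$ is not additive, the expansion must be stated inside the norm, with reduction applied to the whole expression, rather than split term by term. I would state explicitly that the identity is an equality of the arguments before taking $d_p$ and squaring, which is the same convention used in Theorem \ref{result1}.
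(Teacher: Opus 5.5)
Your proposal is correct and is essentially the paper's own argument. The paper gives no separate proof beyond reusing the penalty-term expansion from the proof of Theorem \ref{result1}: the Sum Rule is applied to the $b_1=1$ terms with $|j|\le L-2$, all remaining terms are absorbed into $\mathcal{E}(b_1\neq 1)$, the data-fit term and $\lambda$ are dropped, and the contributions are summed over the ideals $q_k$; your worry about $d_p$ is also easily settled, since reduction mod $q_k$ is additive and $d_p$ depends only on the residue class, so splitting the reduction inside the norm, as the statement does, is harmless.
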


In the experiments, below the polynomials are excluded from the landscape if $E(f) = 0$ (exactly in all three ideals),
$f$ is the zero polynomial,
$f$ is a non-zero constant,
all coefficients of $f$ are divisible by $p$ (trivially zero energy mod~$p$).



\begin{figure}[H]
        \centering
        \includegraphics[width=.45\linewidth]{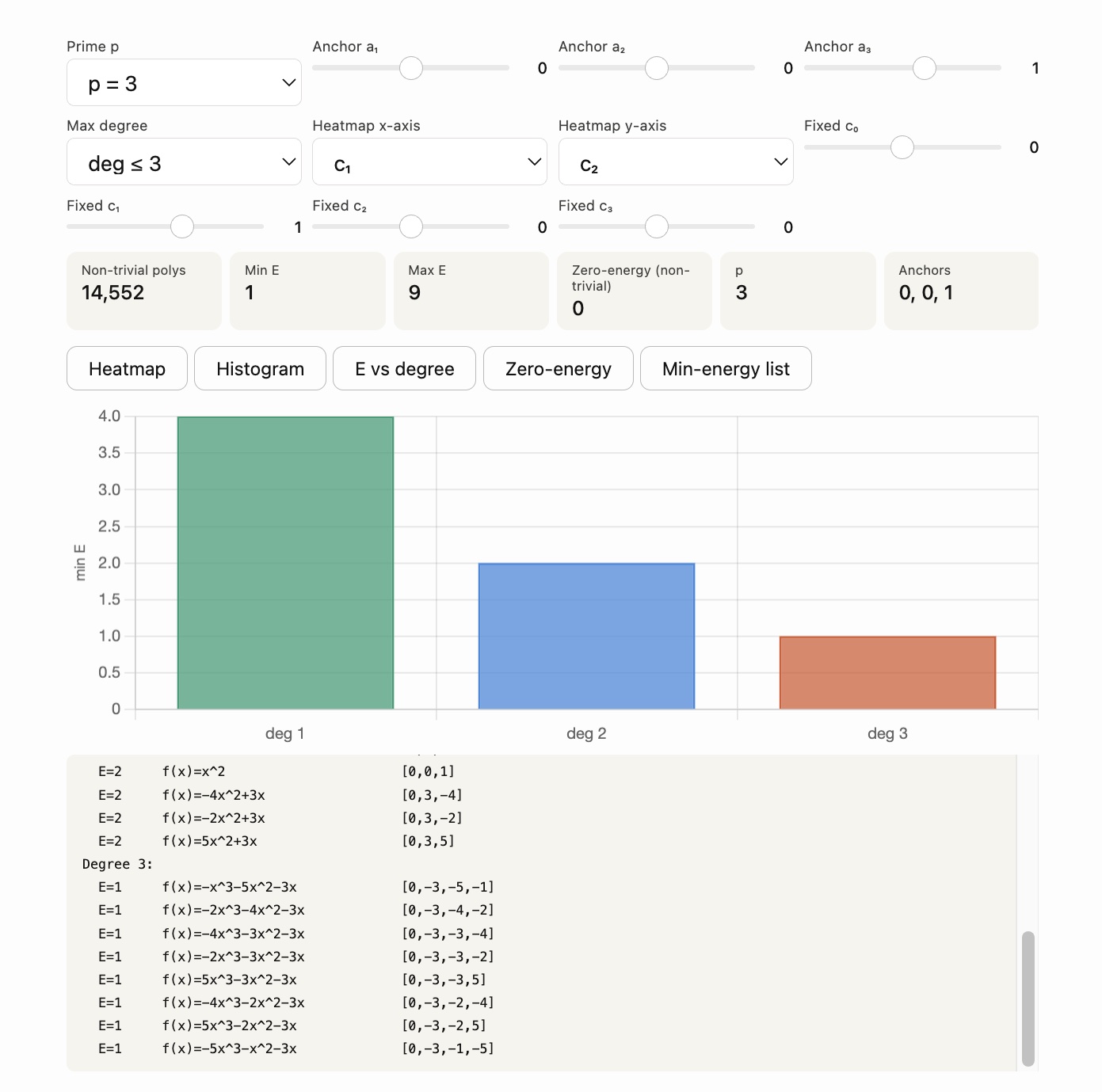}
        \caption{Example of energies when we require that the coefficients of the polynomials to not be divisible by the corresponding prime for two different ideals and p=3}
\end{figure}

\begin{figure}[H]
        \centering
        \includegraphics[width=.45\linewidth]{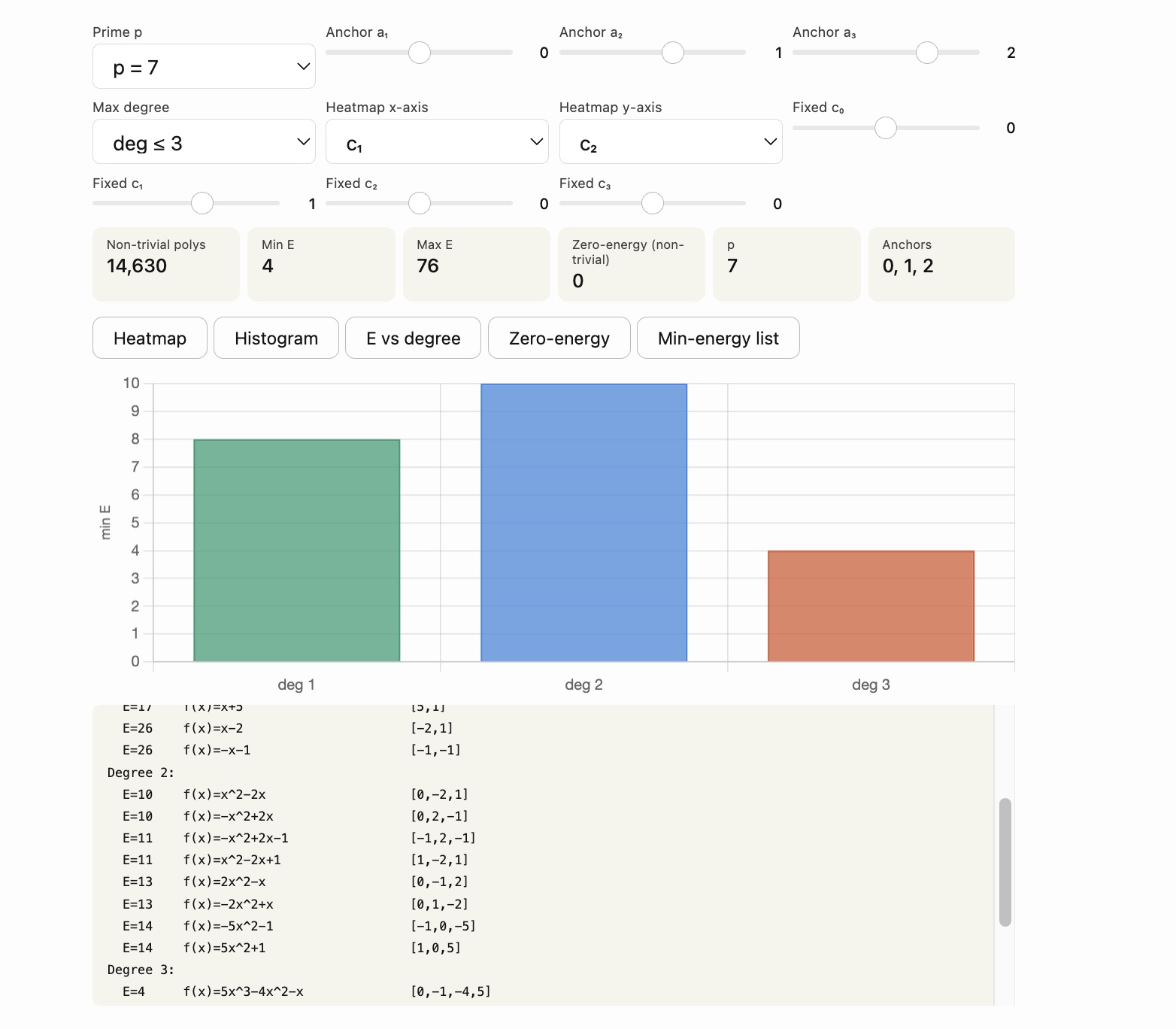}
        \caption{Example of energies when we require that the coefficients of the polynomials to not be divisible by the corresponding prime for three different ideals and p=7}
\end{figure}

As these configurations are rare (see also the experiments), these interact with Large Deviations (see \cite{LDPinstatmech} for Large Deviation Principle for Statistical Mechanics Models) as well as given the discreteness of the data with Hastings Metropolis algorithm in order to generate these types of Gibbs distributions.

These Gibbs distributions are also interesting in order to test the landscape of these energies around the minimizes. For example, how far away a perturbation of a polynomial puts it further from a local minimum. In addition, these exploration algorithms allow a study of the landscape around the local minimizer compared with a direct computation on the congruences. As we are on a discrete space, one can study in this context a Metropolis--Hastings algorithm to discover local minima, with the Metropolis transition rule
\[
\alpha(x,y)=\min(1,e^{-\Delta E}).
\]

This reads as 
\begin{itemize}
\item[] 1)If $E(y)<E(x)$, then $\Delta E<0$, so the move is accepted with probability $1$;
\item[] 2)If $E(y)>E(x)$, then $\Delta E>0$, so the move is accepted with probability $e^{-(E(y)-E(x))}$.
\end{itemize}

We refer the reader to \cite{Metropolishastingschapter8inthis} Chapter 8 for a description of this algorithm in general. One can try also a parameter $\beta$ in front of the energy as an analogue of an 'inverse temperature' and see if there are any phase transitions in this parameter. We consider the Metropolis–Hastings algorithm as a discrete-space alternative for exploring the energy landscape, in contrast to gradient-based methods on continuous spaces.  

\begin{rmk}
In addition, one might study also other formats of the energy in which the exact multiplicity of prime p at a certain point $x=a$ might be replaced with other conditions such as $f(a)=m_1a+m_2$ for some $m_1$ and $m_2$. Thus, the polynomial might not be exactly required to be a multiple of a prime p when evaluated at the point $x=a$ but might be having other structure. Similar conditions can be phrased for higher order derivatives or mixed of derivatives and p-derivations and the corresponding energies studied for those configurations. 

\end{rmk}

\begin{rmk}
We note that one can also consider a combination of usual derivatives and the p-derivations to obtain information about singularities. In \cite{Jacobiancriterionmixed} the authors obtain information on the singular ideals using a mix of usual derivatives and p-derivations. See also Theorem 3 in \cite{AMSdifbyprime} for an expository version.  Thus, one can also rephrase these problems in terms of some loss function on certain values, but now the loss function is built using derivatives of the polynomials along with the p-derivations and one looks on points and computes how these are further from zero by further adding the condition that the points are on the variety.
Indeed, one has another element that needs to be considered
$$
(\delta_p, \partial_{x_i}),\hspace{2mm} i \in\{1, \ldots, N\}.
$$
 \end{rmk}

This suggests in addition to derivative information to consider a \(p\)-derivation component, which plays the role of an 'arithmetic' direction in these discrete problems. This shows that some new transformations linked with the features of the discrete space need to be included in the gradient descent methods for certain classes of problems such as the ones linked with singularities in these spaces.

\section{Axiomatic Probabilistic Structures}

Linked with the previous part, in the second part of the paper, we introduce the framework Noisy Derivations Equations (NDEs). The terms RDEs, and SDEs are already reserved in the community. One can work with various notions of derivations. For example, one can consider the forward difference. That is, for \(h>0\), we consider difference operator $
\Delta_h f(x) := f(x+h) - f(x).$
Then, this will correspond to the study random dynamical systems which appears already in the literature. One can consider various models of derivations and consider random perturbations of these models. 
To keep the same theme with the sections before, we focus on the mixed cases with usual and p derivations, but this approach is more general and can include various derivations models. One motivation for this is also linked with the choice of the set of ideals $M_p$ where we test for roots mod p.  We want to see if knowing some extra data like the largest coefficient of the polynomial (or the order of it) can give us better choices  for what ideals to consider in $M_p$ for testing to be as efficeint as possible. Then, we investigate the following: if we search for roots at random at this scale, what will be the chance to find polynomials with these properties in our set of search? In addition, what are is the probability that solutions, if they exist, behave in a certain way, etc. 
We focus on Noisy Partial Derivations Equations (NPDes). We study these models with the following choice of derivations. The first derivation is the standard $\frac{d}{dx}$ derivative that it is linear and satisfies the Leibniz Rule. The second one is the p-derivation introduced before.  We emphasize that the framework in \cite{Buiumoriginalpaper1}, \cite{BuiumarithmeticPDE1}, is deterministic. We extend this to random cases.

\subsection{Motivation}

In the previous sections, we considered deterministic search of integer points. In this section, we consider random search (that is, a random selection of points). We would like to study if this random type search is successful. That is, what is do solutions of these problems exist, if so what is the chance that they match some partially observed data. One can investigate many questions, for example, what is the chance that these solutions are behaving in a certain way, etc. 
We first study the existence of solutions in the one-variable case. These types of equations appear when one considers random search at various scales which depend on characteristic of solutions.
One first model consists of random equations with noise of the form $\delta^{b_1}_{st,p}\frac{\partial^{b_2}}{\partial x}f \in (p, x-U(\omega))$, for non-negative integers $b_1$ and $b_2$. Then the right hand side would not depend directly on the solution $f$. This type of random equations correspond to random selections of the fibers without any interaction with the solution. In the models which we analyze in the following section, the random selections of the fibers depend on the degree of solutions, thus for given degrees of polynomials in the search space, the selections of anchors depends on that degree.

When we consider points that depend on the solution itself, we study for simplicity of the exposure the cases in which the scales are given by the degree of the polynomial.

\subsection{Extensions to NPDEs in the Arithmetic case}

In \cite{BuiumarithmeticPDE1} and \cite{BuiumarithemticPDE2}, \cite{SurveyarithmeticPDS} the authors describe extensions of the theory to the partial differential case. There are various choices for analogues of $(\mathbb{C}[x_1, x_2] , \partial_{x_1}, \partial_{x_2})$. One of them is the following
$(\mathbb{Z}[q], \delta_{p}, \delta_{q})$ with

$$\delta_p, \delta_q: \mathbb{Z}[q] \to \mathbb{Z}[q]$$

$$\delta_p(\sum_n a_nq^n):=\frac{(\sum_n a_nq^n)-(\sum_n a_nq^n)^p}{p},$$

$$\delta_q(\sum_n a_nq^n):=q\frac{d}{dq}(\sum_n a_nq^n)=\sum_n na_nq^n.$$

This is the so called case (1+2). In \cite{SurveyarithmeticPDS}, the author considers other cases as well. 

One can think about the directions of the standard derivative and the p-derivations as two different directions, one being the standard spatial one and the other one being an 'arithmetic' direction. Then, NPDEs appear when one is interested to find polynomials which have higher order contact conditions at points who are randomly distributed at a scale given by a characteristic of the solution (in our experiments, we considered the degree, but other variants are possible). These equations can also be thought as analogies with the non-singular ones considered in various works so far (see, for example,  \cite{carmona1995stationary}, \cite{godin2013lyapunov}, \cite{safronov136eigenvalue})

\subsection{Analogy with Random ODEs/PDEs}

In this subsection, we consider the analogy with Random ODEs/PDEs. We will construct an example on which all mixed operators obtained by first taking usual derivatives and then $p$-derivations, up to total order L, belong to $(p,x-a)$ uniformly in a. We will use the notion of p-adic valuation $v_p(\cdot),$ which for a fixed prime p, for given integer outputs the exponent of that prime in the  decomposition of the integer.

\subsubsection{Existence of solutions in the one-variable case}

For convenience of the reader, in this section, we focus on the existence of solutions in one-variable case. This analysis can be replicated for $N$-variables. 

\begin{Proposition}
Let $p$ be a prime, and let $\delta_p$ be the standard $p$-derivation on $\mathbb{Z}[x]$ associated to the Frobenius lift $
\phi(x)=x^p, \delta_p(f)=\frac{\phi(f)-f^p}{p}.$
Let
$
g(x)=p^k x^d \in \mathbb{Z}[x], k,d\geq 0.$
Then, for every pair of nonnegative integers $ b,m,$ one has
\[
\delta_p^m\!\bigl(\partial_x^b g\bigr)=a_{b,m}\,x^{(d-b)p^m}
\]
for some coefficient \(a_{b,m}\in \mathbb{Z}\), with the convention that \(\partial_x^b g=0\) if \(b>d\). Moreover, if $
b+m\leq L$
and $k\geq L+1,$
then $
\delta_p^m\!\bigl(\partial_x^b g\bigr)\in (p,x-a).$
for every $a\in\mathbb{Z}$. 
\end{Proposition}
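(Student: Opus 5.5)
The plan is to show by induction that mixed operators preserve monomials, and then to track the $p$-adic valuation of the coefficient using the fact, shown earlier in the paper, that $\delta_p$ lowers the $p$-adic order of a nonzero multiple of $p$ by exactly one.

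\textbf{Step 1: how $\delta_p$ acts on a monomial.} For $c\in\mathbb{Z}$ and $e\geq 0$ we have $\phi(cx^e)=cx^{ep}$, so
\[
\delta_p(cx^e)=\frac{c\,x^{ep}-c^p x^{ep}}{p}=\delta_p(c)\,x^{ep},
\]
where $\delta_p(c)=\frac{c-c^p}{p}$ is the $p$-derivation on $\mathbb{Z}$. This is an integer by Fermat's Little Theorem.

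\textbf{Step 2: the monomial form.} If $b>d$, then $\partial_x^b g=0$. Since $\delta_p(0)=0$, every iterate is $0$, and the statement holds with $a_{b,m}=0$. If $b\le d$, then
\[
\partial_x^b g = p^k\,\frac{d!}{(d-b)!}\,x^{d-b}=c_b\,x^{d-b}.
\]
Iterating Step 1 $m$ times gives
\[
\delta_p^m(\partial_x^b g)=\delta_p^m(c_b)\,x^{(d-b)p^m}.
\]
So $a_{b,m}=\delta_p^m(c_b)$, where $\delta_p^m$ here is the iterated integer $p$-derivation. This proves the first claim.

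\textbf{Step 3: valuation of the coefficient.} Now assume $b+m\le L$ and $k\ge L+1$. We have $v_p(c_b)\ge k\ge L+1$. The earlier computation with $n=p^a m'$ and $a\ge 1$ shows that $v_p(\delta_p(n))=a-1$ whenever $a\ge 1$; if $n=0$ the value stays $0$. Applying this inductively for $j=0,\dots,m-1$, the coefficient $\delta_p^j(c_b)$ is either $0$ or has valuation $v_p(c_b)-j\ge k-j\ge 1$, so the lemma applies at every step. Hence $a_{b,m}=0$ or
\[
v_p(a_{b,m})=v_p(c_b)-m\ge k-m\ge L+1-m\ge b+1\ge 1.
\]

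\textbf{Step 4: ideal membership.} In either case $p\mid a_{b,m}$. Therefore $\delta_p^m(\partial_x^b g)\in p\,\mathbb{Z}[x]\subseteq (p,x-a)$ for every $a\in\mathbb{Z}$. Equivalently, its remainder at $x=a$ is a multiple of $p$.

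The main obstacle is the bookkeeping in Step 3. One must make sure the valuation stays positive at each intermediate step so that the "decrease by exactly one" lemma is applicable throughout. The hypothesis $k\ge L+1\ge m+1$ is exactly what guarantees this, and the zero case must be handled separately.
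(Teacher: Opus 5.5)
Your proof is correct and follows essentially the same route as the paper. Both arguments reduce to the integer $p$-derivation acting on the coefficient of a monomial, $a_{b,m+1}=\delta_p(a_{b,m})$, and then track $v_p$ by induction using the fact that $\delta_p$ lowers the valuation of a nonzero multiple of $p$ by exactly one; the only cosmetic difference is that you cite the earlier $n=p^a m$ computation for this drop, whereas the paper re-derives it inline via $a_{b,m}-a_{b,m}^p=a_{b,m}(1-a_{b,m}^{p-1})$ and carries only the lower bound $v_p(a_{b,m})\ge k-m$.
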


\begin{proof}
We first study what happens when we apply the usual derivative. If $b>d$, then $
\partial_x^b g=0,$
and therefore $
\delta_p^m(\partial_x^b g)=0\in (p)\subseteq (p,x-a)$
for every $m\ge 0$ and every $a\in\mathbb Z$.

Let us assume now that $0\le b\le d$. Then, we have that
$
\partial_x^b g
=
p^k\, d(d-1)\cdots(d-b+1)\,x^{d-b}.$
We then write $
\partial_x^b g = c_b x^{d-b}, 
c_b:=p^k\, d(d-1)\cdots(d-b+1)\in \mathbb Z.$
For the integer $d(d-1)\cdots(d-b+1)$, we  have
$v_p(c_b)\ge k$. Let us apply iterated p-derivations to \(c_b x^{d-b}\). We claim that for every \(m\ge 0\), we have 
$\delta_p^m(c_b x^{d-b}) = a_{b,m}x^{(d-b)p^m}$
for some \(a_{b,m}\in\mathbb Z\) satisfying $
v_p(a_{b,m})\ge k-m. $ We show this by induction on \(m\) in the following.

For \(m=0\), $
\delta_p^0(\partial_x^b g)=\partial_x^b g = c_b x^{d-b},$
so we may take $
a_{b,0}=c_b,$
and indeed $
v_p(a_{b,0})=v_p(c_b)\ge k. $

Let us assume now that
$
\delta_p^m(\partial_x^b g)=a_{b,m}x^{(d-b)p^m}
$
with $
v_p(a_{b,m})\ge k-m.$
Then, by the definition of \(\delta_p\), we have 
$
\delta_p^{m+1}(\partial_x^b g)
=
\delta_p\!\left(a_{b,m}x^{(d-b)p^m}\right)
=
\frac{\phi\!\left(a_{b,m}x^{(d-b)p^m}\right)-\left(a_{b,m}x^{(d-b)p^m}\right)^p}{p}.$
Then, we have $
\delta_p^{m+1}(\partial_x^b g)
=
\frac{a_{b,m}x^{(d-b)p^{m+1}}-a_{b,m}^p x^{(d-b)p^{m+1}}}{p}
=
\frac{a_{b,m}-a_{b,m}^p}{p}\,x^{(d-b)p^{m+1}}.
$
Thus, we have $
a_{b,m+1}=\frac{a_{b,m}-a_{b,m}^p}{p}.$

We now estimate its p-adic valuation. If $k-m\ge 1$, then $
v_p(a_{b,m})\ge 1$. We thus obtain that $
a_{b,m}\equiv 0\pmod p$
so
$
1-a_{b,m}^{\,p-1}\equiv 1\pmod p.$
Therefore, we have that
$
a_{b,m}-a_{b,m}^p=a_{b,m}(1-a_{b,m}^{\,p-1})$
has the same \(p\)-adic valuation as $a_{b,m}$, and thus $
v_p(a_{b,m+1})
=
v_p\!\left(\frac{a_{b,m}-a_{b,m}^p}{p}\right)
=
v_p(a_{b,m})-1
\ge k-m-1
=
k-(m+1).$
This proves the induction. Hence, for every \(m\ge 0\), $
\delta_p^m(\partial_x^b g)=a_{b,m}x^{(d-b)p^m}$
with
$v_p(a_{b,m})\ge k-m.$
Now assume
$
b+m\le L$ and
$k\ge L+1.
$
Then \(m\le L\), so
$
k-m\ge k-L\ge 1.
$
Therefore
$
v_p(a_{b,m})\ge 1,$
which implies that every coefficient of \(\delta_p^m(\partial_x^b g)\) is divisible by \(p\). Hence $
\delta_p^m(\partial_x^b g)\in (p).$
Since
$ (p)\subseteq (p,x-a) $
for every \(a\in\mathbb Z\), it follows that $
\delta_p^m(\partial_x^b g)\in (p,x-a)
$
for every \(a\in\mathbb Z\), uniformly in a. This completes the proof.
\end{proof}

\begin{rmk}
Following the previous result, we obtain the following. Let \(L\ge 0\). If $g(x)=p^k x^d$
 \text{with}  
$k\ge L+1$
then for every pair of nonnegative integers \(b,m\) with
$ b+m\le L, $
one has
$
\delta_p^m\!\bigl(\partial_x^b g\bigr)\in (p,x-a)$
for every \(a\in\mathbb Z\).
\end{rmk}

\begin{rmk} 

In particular, for $b=0,$ one obtains the analogy with random ODEs (as there are just p-derivations involved, no mixes).
For $g(x)=p^k x^d \in \mathbb{Z}[x], k,d\geq 0,$, for every integer \(m\geq 0\) with \(m\leq k\), one has
$$
\delta_p^m(g)(x)=a_m x^{dp^m}
$$
for some \(a_m\in \mathbb{Z}\) satisfying $
v_p(a_m)=k-m.$

\end{rmk}

\section{Random search}

We study a model in which the search window scales
with the degree and perform random search with this state. In this model we are thus using a
state variable that is linked to root  count as there are relations between the degree of a polynomial and the number of roots.

\begin{mydef}[Degree-scaled random search]
For an integer-coefficient polynomial $f$ of degree $d=\deg(f)$ and a random variable which gives equal chance to integer points on $\{-3,\ldots,3\}$, define the random integer points
$$
  a(f,U) \;=\; d\cdot U.
$$

Thus, we consider the following:
$$\delta^{b_1}_{st,p}\frac{\partial^{b_2}}{\partial x}f \in (p, x-deg(f)U(\omega)),$$ for non-negative integers $b_1$ and $b_2$  such that $b_1+b_2 \le1 .$In the experiments below we consider another choice of the multiplication in front of the random variable on the right hand side that is $D+1-deg(f)$, for some $D \geq deg(f)+1.$ We investigate solutions in a certain subspace of polynomials with bounds on the coefficients.

For the anchors $a$ previously described, given that $b_1+b_2 \leq 1$, the ideal membership conditions are
$$
  C_1: f(a)\equiv 0,\quad C_2: f'(a)\equiv 0,\quad
  C_3: \delta_p(f)(a)\equiv 0\pmod{p}.
$$
\end{mydef}

Although we do not pursue this direction in the current work, one can study the cases
$$\delta^{b_1}_{st,p}\frac{\partial^{b_2}}{\partial x}f \in (p, x-deg(f)U(\omega)),$$ 
with $b_1+b_2 >1$, on certain subspaces of the integer-coefficients polynomials with bounded coefficients or beyond. One can see our initial study as analogues with Random ODEs and the latter direction mentioned as analogues with Random PDEs.

\subsection{Claude Experiments}

First, we consider the problem for fixed fibers $U=u_i, i<p-1$. For each fixed fiber $u_i$, we consider the set of solutions $\text{Sol}(u_i).$ As these equations experience non-uniqueness of solutions in general, we implement a selection rule among these solutions on each fixed fiber.  Although there are other models possible for selection, we consider the uniform choice among the solutions $\text{Sol}(u_i),$ for each $i \in \{1, \ldots, n\}$, with $n \leq p-2.$ That is, we select a solution from the corresponding fiber, with probability 
\begin{equation}
\mathbb{P}_i(\{f(x)\})=\frac{1}{|\text{Sol}(u_i)|},
\end{equation}
for each $i\leq p-2,$ whenever $|\text{Sol}(u_i)|$ is non-zero.

In order to compute or estimate this probability one is required to solve the system of equations over $\mathbb{Z}/p\mathbb{Z}$ given by a number of congruences to be satisfied based on the order of the p-derivations and usual derivations considered. Then, for each fixed fiber $u_i$, one can consider independent selections with the probability $\mathbb{P}_i.$  

We study $\frac{|Sol(u_i)\cap R_1|}{|{Sol{(u_i)}}|} $ across fibers. For $f=c_0+c_1 x+c_2 x^2$ with $a=deg(f)U(\omega) $,
we have 
\begin{align}
  C_1&:\quad c_0 + c_1 a + c_2 a^2 \equiv 0 \pmod{p}, \label{eq:lin1}\\
  C_2&:\quad c_1 + 2c_2 a \equiv 0 \pmod{p}, \label{eq:lin2}\\
    C_3&:\quad \delta_p(f)(a)\equiv 0\pmod{p}. \label{eq:lin3}
\end{align}

\begin{figure}[H]
        \centering
        \includegraphics[width=.95\linewidth]{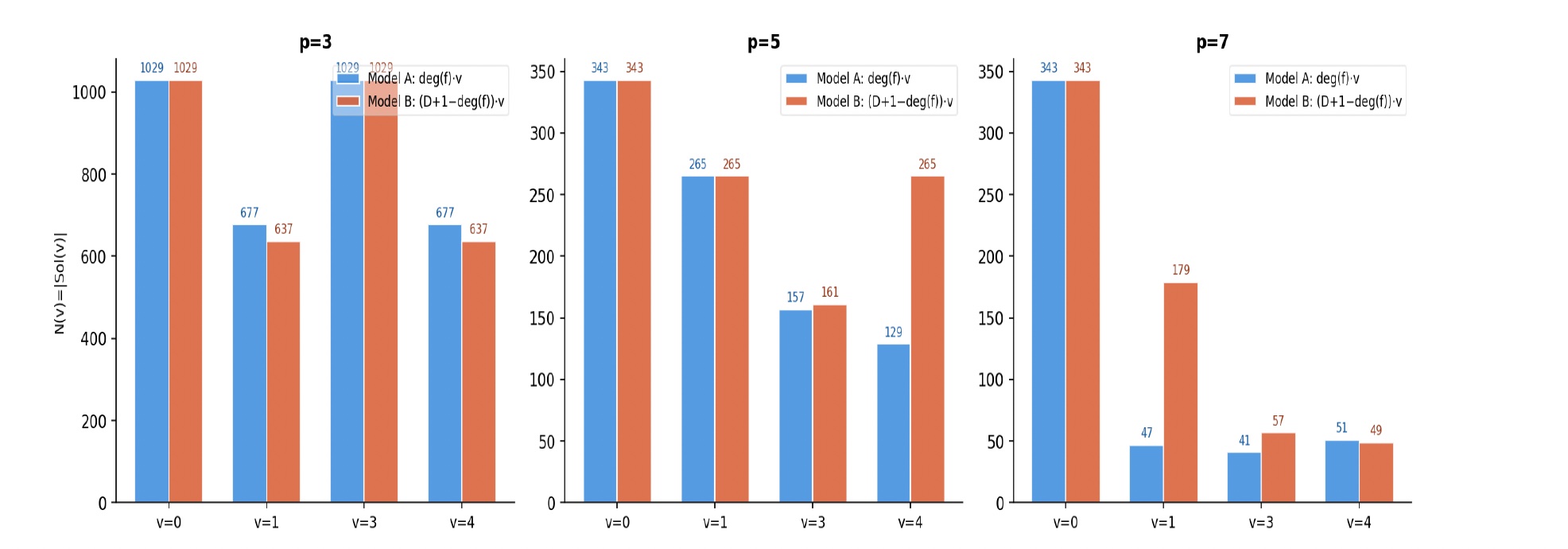}
        \caption{Number of solutions for various fibers for two choices of the right hand-side. We consider integer-coefficients polynomials with degree up to 4 and with integer coefficients between [-3, 3]. v is a random variable which assigns equal chance $1/4$ to $\{1, 2,3, 4\}.$}
\end{figure}

These experiments show that although following the previous results one can always construct a solution if the coefficients are of the form $p^k$ for some $k\geq m+1$ then there are multiple solutions also when coefficients are between  $[-3, 3] \cap \mathbb{Z}$ so less than $p^k$ for $k \geq 2$ in our case less than 9, 25, and 49. So these solutions are not directly due to the fact that the coefficients are automatically divisible by p. We note that we might see these patterns due to the fact that we work with one-variable and with first level conditions (the NDE case, the analogy with Random ODEs). 

\begin{rmk}
We note that we might see so many solutions as we study the problem in one dimension only where there are many combinations that solve the corresponding modular equations. For many variables and more restrictive equations the situation might be very different. 

\end{rmk}

\subsection{Claude experiments in which one has access to the data on few points}

Let us consider a prime p. We assume that we have partial observations of the data at some points. Due to computational power availability, we consider the scenario consisting of only one point, but in general can be a finite collection of points. We observe noisy data on those points. Then, one is interested to find out if there is any polynomial that belongs to a symbolic power of a certain order that is matching the observed data.
We perform some tests of this type of prototype random equations for small primes (due to the computational capabilities available) and show how likely are we to get matching data at a point.

\begin{figure}[H]
        \centering
        \includegraphics[width=.95\linewidth]{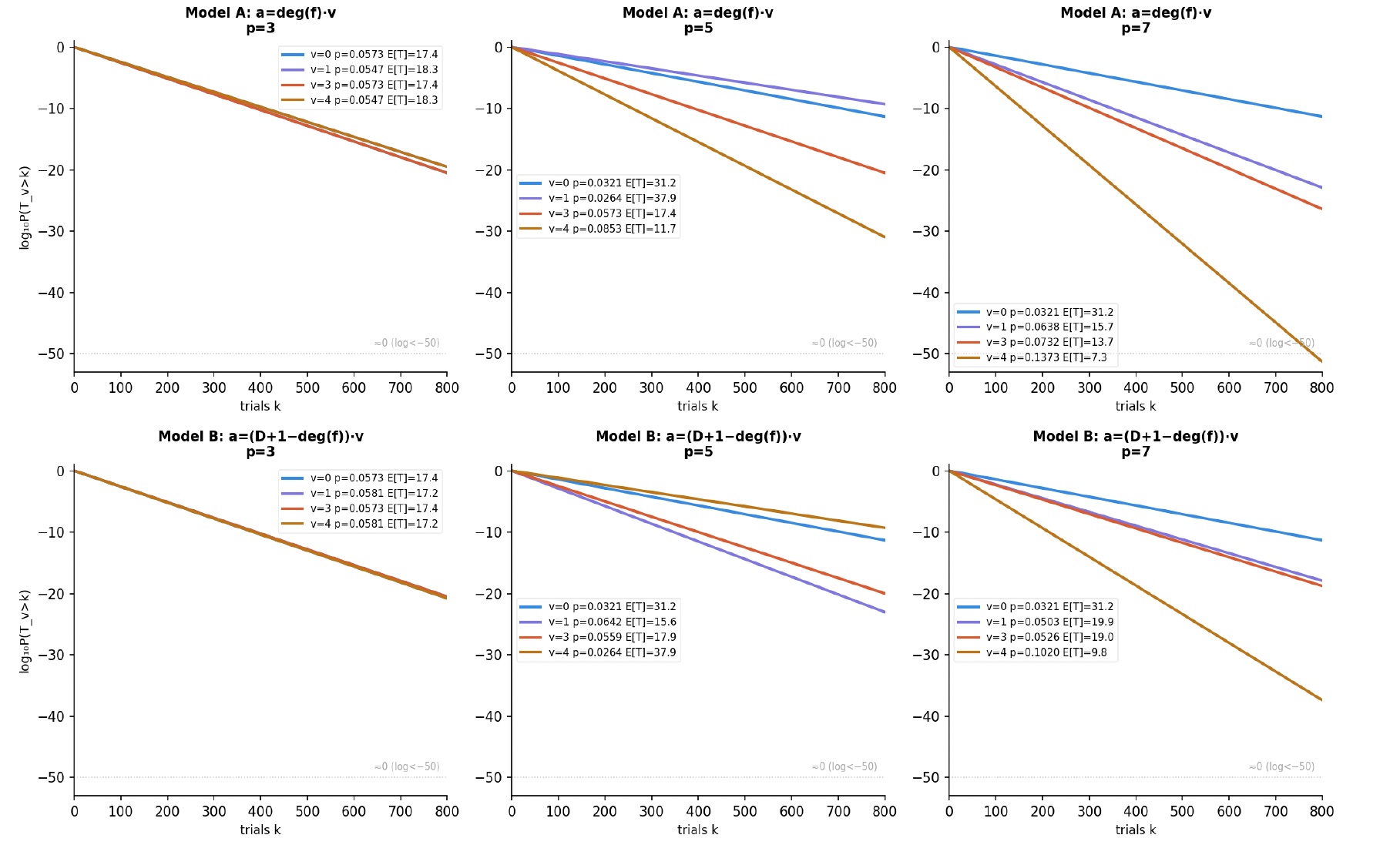}
        \caption{Survival probabilities for the number of experiments needed until first success for various fixed fibers.}
\end{figure}

\begin{rmk}
One can consider more general models with access to data on more points. 
Let $R_1$ and $R_2$ be two events that capture the fact that the restrictions $R_1$ and $R_2$ are satisfied. That is, $R_1=\{f(x_1) \in [\rho_1^1, \rho_1^2]\}$ and $R_2=\{f(x_2) \in [\rho_2^1, \rho_2^2]\}$, for some bounds $\rho_i^j$, $i \in \{1,2\}$, $j \in \{1, 2\}.$  One can consider a more general situation in which one has m region bound events $R_j, j \in \{1, \ldots, m\}.$
\end{rmk}

We observe from our experiments that the solutions to these types of equations with noise are in general not unique. There is literature which deals with non-uniqueness of solutions in the random setting, see for example \cite{cardona2020semiflow-nonuniqueref} and the references therein. The work in \cite{cardona2020semiflow-nonuniqueref} also considers solution sets, differential inclusions , etc. which are interesting perspectives to consider in our context as well in the future.  In our setting, we consider a selection mechanism that has maximal entropy in the sense that we pick uniformly among the solutions in the solution set on a fixed fiber. Other possible selection mechanisms among the solutions are possible and deserve a detailed study in the future.
In addition, it is also interesting to study the solution sets themselves.

\subsection{Extensions}

1)So far, we considered the uniform model (in the sense that values to be picked are equally likely) but one can consider other probabilistic models. \\
2) In addition, one can consider other formats and more variables. In this case, we considered a model in which the information which appears on the solution on the right-hand side of these equations is through the degree of it. One can consider also other functions of the solutions. In addition, we studied only the one-variable case but one can extend this analysis also to multi variables cases and ideals generated by a  prime and an irreducible random polynomial whose random coefficients depend on the solution.
One can for example consider equations with noise of the form

$$\delta_p \partial_x f(x, y, \omega) \in (p, x-\alpha(deg (f))\zeta_p(\omega) y^2).$$
3) Moreover, one can use Extreme Value Statistics, one can study one-sided versions. Then, one can compute and estimate probabilities of such events on each fiber for $l\geq 1$ independent experiments $\mathbb{P}_i(\max_jf_j(x_0) \leq R)=\prod_{j=1}^l\mathbb{P}_i(f_j(x_0) \leq R)=\mathbb{P}_i(f_j(x_0) \leq R)^l.$ Also,  $\mathbb{P}_i(\min_jf_j(x_0) \geq \tilde{R})=\prod_{j=1}^l\mathbb{P}_i(f_j(x_0) \geq \tilde{R})=\mathbb{P}_i(f_j(x_0) \geq \tilde{R})^l.$

In order to estimate the probability of these types of events one needs to estimate 
$\frac{|Sol(u_i)\cap \{f_j(x_0)\leq R\}|}{|Sol(u_i)|}$ for each fiber $u_i$. 
In order to compute or to estimate such probabilities, one needs estimates and theory from the area of Arithmetic Geometry as it relies on solution counts, etc. That is one of the reasons this direction of research is interesting as it relied on both arithmetic estimates and on probabilistic techniques and the combination between them. One can study more general events.






We note that, although we do not pursue this here as it is  a first work in this direction, a similar analysis can be carried for polynomials which are expected to have more roots. For example, one might consider two ideals and the random case in which the right hand-side does not depend on the solution.  

\begin{align}\label{rde1}
\delta_{st, p}^{b_1}\left(\frac{\partial^{b_2}}{\partial x^{b_2}} f(x;\omega)\right) \in &\left(p, x-x_1(\omega)\right)\cap\left(p, x-x_2(\omega)\right) .
\end{align}

For some functions $\sigma_1(\cdot)$, and $\sigma_2(\cdot)$ of the solutions, one can consider more involved equations such as

\begin{align}\label{rde2}
\delta_{st, p}^{b_1}\left(\frac{\partial^{b_2}}{\partial x^{b_2}} f(x;\omega)\right) \in &(p, x-\zeta_p(\omega)\sigma_1\left(f(x;\omega)\right))\cap\left(p, x-\zeta_p(\omega)\sigma_2\left(f(x;\omega)\right)\right).
\end{align}

Moreover, one can consider also dynamical version of the equations with noise considered so far.

For $f_0 \in \mathbb{Z}[x]$, let us consider

\begin{align}
\delta^{b_1}_{st, p}\left(\frac{\partial^{b_2}}{\partial x^{b_2}} f_{n+1}(x;\omega)\right) \in &\left(p, x-\zeta_{n,p}(\omega)\sigma_1\left(f_n(x;\omega)\right)\right).
\end{align}

These equations appear when considered coupled with the static ones. Indeed, imagine that you know that another investigator that is searching for these polynomials with these properties has tried the anchor at the scale of the degree in the previous step with a given noise. For another class of polynomials with higher degrees then one is interested to see if there can be solutions at the same scale as the one used for the static version by the other investigator. Indeed, when the degrees are large these scales might be pretty well separated and then the $(n+1)^{th}$ degree search might be on a considerably larger region.

One can also think about the dynamical version in the following steps. One first samples a random variable, then it considers the random search in the space of polynomials with certain characteristics, say the degree. Then it multiplies the random variable with a function of  a feature of the search space of the partner investigator, say the degree of polynomials used by them. Then searches for solutions to the samples scaled realizations of that random variable in the corresponding space (say polynomials with a degree higher, etc). 

The study of the structure of the noise  is an important topic in Probability theory. In our picture, the structure of the noise (say the extreme behavior/ the tails or the typical behavior etc.) interacts also with the arithmetic structure. Also, it is interesting to study techniques of Large Deviations Theory this direction. As before, one can consider extensions of these problems to multi-variables polynomials.







\section{Further applications of ALS}

The methods developed so far are suited, among others, also for studying data that are collected at a certain instance of a later time and deduce using the rules information about initial configurations. This can be useful for Economics, Biological Data, etc. 

We focused these initial explorations in the previous sections on ALS based on derivations which come naturally in the dataset of objects that are integer-coefficients polynomials. This approach can be extended to study of Economic models, for example the ones on graphs (see \cite{degroot1974reaching}, \cite{cerreia2024dynamic}, \cite{golub2026spectral}). In this context, one is interested to model the opinions on certain goods. 
This problem can be phrased both as a data fitting problem or as an energy problem.

\subsection{Other versions and applications of ALS in cases when the rules are not known}

The ALS approach allows to study the corresponding losses for various choices of axioms and obtain various properties and loss landscapes as functions of those rules. In  Economics, one can study also loss functions across many transformations simultaneously and their corresponding axioms. 

 The approach we explore is not restricted to Economics, and it can be replicated to Biology etc, in general to fields in which we want to discover the elements of a potential theory that can explain the patterns. In addition, using this approach one can be closer to a mathematical thinking in which one does not only get information from the solution itself but also from the other routes that lead to bigger errors as approaches that did not match.
 We note that one can consider the analysis over various choices of axioms for the transformations. In addition, these transformations might not be based on derivatives or derivations and is a setting in which more general transformations such as those based on updates rules are natural. One can study for each choice of the axioms the behavior of the models or one can study over a class of axioms simultaneously the behavior of the model. This goes along the lines of studying models in which the rules of update are not known in advance and one studies various scenarios individually in case these updates tend to be in a certain class of transformations or another. This formulation is particularly suitable for certain studies in which one does not have a precise choice yet and this is to be discovered.

Compared with Biology or other types of data sets that one can measure, the Economics models might not have an exact rule of updates, or these rules might change due to various factors so considering many potential models and study them one  by one or simultaneously looks more appealing for Economics and related disciplines with the same features. In Economics, the mechanism can also change with institutions, incentives, information, or the social environment. So a natural theoretical direction is to study how does the economic outcome change across plausible updating rules.

 \subsection{ALS in Economics: A  simplified initial model}

 Let us consider a model where $x_i<1$ and $x_i>1,$ with the interpretation of favoring a certain output and respectively being against it.
Let us model the Opinion Dynamics on trees which models the hierarchical structure of a company, etc. Let us consider the initial opinions at the leaves $x_i^{(0)}$ which will propagate through some mechanisms given by the family of transformations towards the root of the tree. 
We can investigate the following problem at any level of the tree between the leaves and the root. 
Let $N(i)$ denote the number of branches that a node $i$ has on the tree. A general update could be

$$
x_i^{(l+1)}
=
T_i(x_j^{(l)}), j\in N(i).
$$

For example, a weighted averaging rule could be

$$
T_i(x_j^{(l)})=\sum_{j\in N(i)}
w_{ij}x_j^{(l)},
\qquad
\sum_{j\in N(i)}w_{ij}=1.
$$
This is weighted opinion transformation but one can consider other general transformations.

Then the data-fitting loss at level $l$ in the tree is defined by 
$$
L^l_{\mathrm{data}}(T, x_0)
=
\sum_i
|
y_i-x_i^{(l)}
|^2,
$$
where $y_i$ is the observed opinion data at level $l$. One can study these quantities individually for choices of transformations Economics meaningfully. In addition, one can consider this formulation uniformly over many transformations possible to be considered and check the disagreement from a fixed initial opinion or it can also consider a fixed transformation and then change the initial opinion at the leaves (for example, considering  $x_0 +y_0$, for some $y_0$ on each opinion on the leaves) and use the axioms/rules of those transformations to see how it propagates up to any level $l$. One can consider various other perturbations of the initial opinions on leaves. 

One can consider also more involved models in which one checks the consensus at level $l$ among the opinions formed. Then, one can also formulate an energy version of this. Overall, there are many interesting problems to be considered. 

Indeed, one can consider

$$L_I^l(T, x_0)
=
\inf_{T\in\mathcal T} \sum_i
|
y_i-x_i^{(l)}
|^2,$$ as well as 

$$L_S^l(T, x_0)
=
\sup_{T\in\mathcal T}\sum_i
|
y_i-x_i^{(l)}
|^2,$$
and perform an analysis over a family of transformations with Economics meaning.

In addition, one can consider an Energy approach to measure closeness to consensus,  to the level $l$  $$ E_l= \sum_i V(x_i^{(l)}) +\sum_{\{i,j\}} W(x^{(l)}_i,x^{(l)}_j).$$ 
A potential choice for the elements in this energy can be $V(x) = \kappa(x-1)_+^2$. This choice can favor opinions below the action threshold \(1\), and $W(x^{(l)}_i,x^{(l)}_j) = (x^{(l)}_i-x^{(l)}_j)^2$ can favor agreement. One starts with opinions profile at the level of leaves and obtains the values at level $l$ through the rules of the transformations. Thus, one can use various initial opinions profile and perturbations of those as before and study through the axioms of the transformations various energy profiles. We note that this is an initial model used to exemplify the ideas. A similar approach can be considered for more involved models.

\section{Discussion, open problems, other examples of ALS and APS}

\subsection{ALS and APS: at cross-roads of many research areas}

The current development sits at the cross-roads between several disciplines such as Learning Theory, Probability, Optimization, to mention a few. Through the study of the interaction with the axioms of the objects and transformations under consideration it links naturally with many fields of research.

We believe that, in general, techniques from the areas of Probability Theory (Asymptotic results, Concentration Inequalities, etc.) Large Deviations, Optimal Transport, Control Theory, etc. will be beneficial for these studies. We also mention the other way around: these areas will bring new problems to study in those areas which study the interactions of those techniques with arithmetic properties, derivations-like transformations, general derivations, etc.

Let us consider one of the directions and give more details. We consider the interaction in particular with Optimization Theory in the study of $ALS_d$. In the one direction, we study Arithmetic problems with Optimization techniques (considering an loss function functional and its minimization) in the context of noisy data driven polynomial recovery and Mathematical objects data-sets for generating conjectures, testing new rules, etc.  That captures the use of Optimization techniques in other areas.

In the opposite direction, these types of problems give new perspective on the study of Optimization problems where the loss function functions contain other choices of derivations (for example the ones we consider studied in Arithmetic Geometry) rather than the usual derivative. in terms of the fundamental functions on the space such as polynomials, operations between them such as addition etc, and natural p derivations and other operations that are designed through he axioms. Axioms are needed as there are more types of calculus.



\subsection{Other examples of ALS and APS, non-exhaustive list of open problems}

\begin{itemize}

\item \textbf{Random polynomials, Random fields and Geometry}

We discuss briefly other structures of interest in Probability Theory and their interactions with the axioms, namely Random Fields. We naturally want to consider them as elements of an Axiomatic Probabilistic Structure in which we consider instead the standard derivative we consider, to keep the theme, p-derivations or mixed derivations and p-derivations. One aim of this future direction is to explore the interaction of this structure of interest in Probability with the axioms of the transformations involved. 

One can consider various probabilistic model on polynomials of degree $d\leq D$ with integer coefficients. There are great monographs devoted to the study of Random Fields and their geometry, see for example \cite{Randomfieldsandgeometry}. In \cite{Randomfieldsandgeometry} the authors consider the study of random functions, the statistics of their critical points, the excursion, sets, etc.
The critical points of the random smooth functions are obtained via $\nabla f=0$ for $f$ a random smooth function. In addition, can do a similar analysis by replacing the operations before with mixes of usual and p-derivations and studies their zeros mod p, for some prime p. The interpretation would be then different as then one obtains for these random polynomials, sets of their higher contact behavior. As usual, one can consider also other models.


\item \textbf{A model for a random metric in the primes coordinates setting}

Another potential direction is to consider random analogues of the Arithmetic Differential Geometry developed, between others, in the monograph \cite{Arithmeticgeometrybook}. This contains a deterministic study of a metric model in which one considers prime numbers as the coordinates and the p-derivation as the substitute of the usual derivatives (see \cite{Arithmeticgeometrybook}). 
Random Metric models are structures of interest in Probability Theory, so this is another example of APS. One can consider a random metric model in which the usual derivatives are replaced by the p-derivations and a bilinear form with random coefficients, say on the diagonal, that model the chance that the p-derivation is applied for that prime p to the integer coefficient polynomial. Then one can check again for the obtained polynomial after this operation by resampling the metric, Other models are possible. Then, one can study the evolution of an integer-coefficient polynomial under a sequence of transformations  and ask for example survival probability (\cite{junk2014survival}) type questions linked with divisibility properties after n steps and phrase 'survival' in terms of that. In addition, one can replace p-derivations with some general transformations for other models.

\item Given the flexibility of the framework, one can replicate ideas in other directions. These directions can be of both theoretical and practical interest. As these are initial applications of these ideas it will be interesting to study the interactions between the axioms (for instance those that have to do with arithmetic) and other topics covered in \cite{BishopML} such as Information Theory, Decision Theory,etc. In addition, given the structures already studied in text data (see \cite{benchmarking}), future work could investigate whether our Transformation-Axiomatic based approach provides additional structural insights.

\end{itemize}

\textbf{Acknowledgements:}
I would like to thank Gabriel Terejanu and Razvan Bunescu for interesting discussions on the topic of Machine Learning. I would like to thank my spouse Laura for the schematic drawings used in this paper. 

\bibliographystyle{plain}
\bibliography{ref}

\end{document}